\documentclass[11pt,a4paper,reqno]{amsart}
\usepackage[english]{babel}
\usepackage[T1]{fontenc}
\usepackage{verbatim}
\usepackage{palatino}
\usepackage{amsmath}
\usepackage{mathabx}
\usepackage{amssymb}
\usepackage{amsthm}
\usepackage{amsfonts}
\usepackage{graphicx}
\usepackage{esint}
\usepackage{color}
\usepackage{mathtools}
\usepackage{overpic}

\usepackage[colorlinks = true, citecolor = black]{hyperref}
\pagestyle{headings}

\author{Tuomas Orponen}
\address{Department of Mathematics and Statistics\\ University of Jyv\"askyl\"a,
	P.O. Box 35 (MaD)\\
	FI-40014 University of Jyv\"askyl\"a\\
	Finland} 
\email{\href{mailto:tuomas.t.orponen@jyu.fi}{tuomas.t.orponen@jyu.fi}}

\title{$ABC$ sum-product theorems for Katz-Tao sets}
\date{\today}
\subjclass[2010]{11B30 (primary) 28A80 (secondary)}
\keywords{Discretised sum-product problem}
\thanks{T.O. is supported by the European Research Council (ERC) under the European Union’s Horizon Europe research and innovation programme (grant agreement No 101087499), and by the Research Council of Finland via the project \emph{Approximate incidence geometry}, grant no. 355453.}

\newcommand{\R}{\mathbb{R}}

\newcommand{\N}{\mathbb{N}}

\newcommand{\Z}{\mathbb{Z}}

\newcommand{\Hd}{\dim_{\mathrm{H}}}

\newcommand{\diam}{\operatorname{diam}}

\newcommand{\dist}{\operatorname{dist}}

\def\Barint_#1{\mathchoice
          {\mathop{\vrule width 6pt height 3 pt depth -2.5pt
                  \kern -8pt \intop}\nolimits_{#1}}%
          {\mathop{\vrule width 5pt height 3 pt depth -2.6pt
                  \kern -6pt \intop}\nolimits_{#1}}%
          {\mathop{\vrule width 5pt height 3 pt depth -2.6pt
                  \kern -6pt \intop}\nolimits_{#1}}%
          {\mathop{\vrule width 5pt height 3 pt depth -2.6pt
                  \kern -6pt \intop}\nolimits_{#1}}}

\numberwithin{equation}{section}

\theoremstyle{plain}
\newtheorem{thm}{Theorem}
\numberwithin{thm}{section}
\newtheorem*{"thm"}{"Theorem"}

\newtheorem{lemma}[thm]{Lemma}

\newtheorem{ex}[thm]{Example}
\newtheorem{cor}[thm]{Corollary}
\newtheorem{proposition}[thm]{Proposition}

\newtheorem{claim}[thm]{Claim}

\theoremstyle{definition}

\newtheorem{definition}[thm]{Definition}

\newtheorem{notation}[thm]{Notation}

\theoremstyle{remark}
\newtheorem{remark}[thm]{Remark}

\addtolength{\hoffset}{-1.15cm}
\addtolength{\textwidth}{2.3cm}
\addtolength{\voffset}{0.45cm}
\addtolength{\textheight}{-0.9cm}

\newcommand{\nref}[1]{(\hyperref[#1]{#1})}

\DeclareMathSymbol{\intop}  {\mathop}{mathx}{"B3}

\begin{document}

\begin{abstract} I prove two variants of the $ABC$ sum-product theorem for $\delta$-separated sets $A,B,C \subset [0,1]$ satisfying Katz-Tao spacing conditions. The main novelty is that the cardinality of the sets $B,C$ need not match their non-concentration exponent. The new $ABC$ theorems are sharp under their respective hypotheses, and imply the previous one.  \end{abstract}

\maketitle

\tableofcontents

\section{Introduction}\label{s:intro}

The \emph{$ABC$ sum-product problem} asks for sufficient conditions on three sets $A,B,C \subset \R$ to guarantee that the "size" of $A + cB$, for some $c \in C$, is significantly larger than the "size" of $A$. When $A,B,C$ are finite sets, and size is measured with cardinality $|\cdot|$, the necessary and sufficient conditions for $\max_{c \in C} |A + cB| \gg |A|$  are the following:
\begin{equation}\label{discreteConditions} \max\{|B|,|C|\} \gg 1 \quad \text{and} \quad |B||C| \gg |A|. \end{equation}
More precisely, the Szemer\'edi-Trotter theorem \cite{MR729791} implies that if $|C|$ is larger than an absolute constant (indeed the constant in the Szemer\'edi-Trotter theorem), then 
\begin{displaymath} \max_{c \in C} |A + cB| \gtrsim \min\{\sqrt{|A||B||C|},|A||B|\}. \end{displaymath}
This proves the sufficiency of the conditions \eqref{discreteConditions}. The necessity can be seen by letting $A,B,C$ be suitable arithmetic progressions, see the introduction to \cite{MR4778059}. 

When $A,B,C \subset \R$ are compact (infinite) sets, and "size" is measured by Hausdorff dimension $\Hd$, the $ABC$ problem was resolved in \cite[Theorem 1.6]{2023arXiv230110199O}. This time the necessary and sufficient conditions for $\sup_{c \in C} \Hd (A + cB) > \Hd A$ are
\begin{equation}\label{dimensionConditions} \max\{\Hd B,\Hd C\} > 0 \quad \text{and} \quad \Hd B + \Hd C > \Hd A. \end{equation} 
These conditions are counterparts of \eqref{discreteConditions} for $\Hd$. The motivation in \cite{2023arXiv230110199O} for studying the Hausdorff dimension version of the $ABC$ problem was to make progress towards the \emph{Furstenberg set conjecture}. The conjecture was eventually resolved by Ren and Wang in \cite{2023arXiv230808819R}, but building on \cite{2023arXiv230110199O}, so \cite[Theorem 1.6]{2023arXiv230110199O} remains a component of the full solution.

The proof of \cite[Theorem 1.6]{2023arXiv230110199O} proceeds via a \emph{$\delta$-discretised} statement. This statement concerns $\delta$-separated sets satisfying non-concentration conditions that capture \eqref{dimensionConditions}:
\begin{thm}\label{thm:ABCConjecture}
Let $\alpha \in [0,1)$ and $\beta,\gamma > 0$ satisfy $\beta + \gamma > \alpha$.  Then, there exist $\chi,\delta_{0} \in (0,\tfrac{1}{2}]$ such that the following holds for all $\delta \in 2^{-\N} \cap (0,\delta_{0}]$. Let $A,B,C \subset \delta \Z \cap [0,1]$ be sets satisfying:
\begin{enumerate}
\item[\textup{(A)}] $|A| \leq \delta^{-\alpha}$.
\item[\textup{(B)}] $B$ is a non-empty Frostman $(\delta,\beta,\delta^{-\chi})$-set. 
\item[\textup{(C)}] $C$ is a non-empty Frostman $(\delta,\gamma,\delta^{-\chi})$-set. 
\end{enumerate}
Then there exists $c \in C$ such that
\begin{equation}\label{conclusion2} |\{a + cb : (a,b) \in G\}|_{\delta} \geq \delta^{-\chi}|A|, \qquad G \subset A \times B, \, |G| \geq \delta^{\chi}|A||B|. \end{equation}
In particular $\max_{c \in C} |A + cB|_{\delta} \geq \delta^{-\chi}|A|$. 
\end{thm}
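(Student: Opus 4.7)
The plan is to reduce to the earlier $ABC$ theorem of \cite{2023arXiv230110199O}, in which the cardinalities of $B$ and $C$ were required to match their non-concentration exponents. First, by dyadic pigeonholing on the sizes of $B$ and $C$ (and a corresponding refinement of $G$ to preserve the robust form of the conclusion), I would reduce to the case where $|B|=\delta^{-\beta_{1}}$ and $|C|=\delta^{-\gamma_{1}}$ for some $\beta_{1}\in(0,\beta]$ and $\gamma_{1}\in(0,\gamma]$. The refined sets remain Frostman $(\delta,\beta,\delta^{-\chi})$- and $(\delta,\gamma,\delta^{-\chi})$-sets, hence \emph{a fortiori} $(\delta,\beta_{1},\delta^{-\chi})$- and $(\delta,\gamma_{1},\delta^{-\chi})$-sets whose cardinalities now match their non-concentration exponents.

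If $\beta_{1}+\gamma_{1}>\alpha$, I would simply invoke the previous $ABC$ theorem with parameters $(\alpha,\beta_{1},\gamma_{1})$ and conclude. The genuinely new case is $\beta_{1}+\gamma_{1}\le\alpha$, and here my plan is to unlock the Frostman slack via a scale change. Choose $\Delta\in(\delta,1]$ so that the $\Delta$-covering of $B$ saturates the original exponent $\beta$, i.e.\ $|B|_{\Delta}\sim\Delta^{-\beta}$; choose an analogous $\Delta'$ for $C$ with exponent $\gamma$; and pass to the common coarser of the two scales. At that coarser scale the coverings of $B$ and $C$ are honest Frostman sets of matching cardinalities and exponents $\beta,\gamma$, while $|A|$ at that scale is bounded by $\Delta^{-\alpha'}$ with $\alpha'\le\alpha$; since $\beta+\gamma>\alpha\ge\alpha'$, the previous $ABC$ theorem produces some $c$ for which $|A+cB|_{\Delta}\gtrsim\Delta^{-\chi_{1}}|A|_{\Delta}$ along a large portion of the refined $G$.

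The remaining task, which I expect to be the main obstacle, is to upgrade a gain at scale $\Delta$ into a gain at scale $\delta$. Covering by $\delta$-balls alone is not enough, because a priori the sub-$\Delta$ structure of $B$ could be entirely supported on a small affine progression. To avoid this, I would use the Frostman hypothesis of $B$ below scale $\Delta$: after a further dyadic pigeonholing, most $\Delta$-cells of $B$ contain a Frostman $(\delta,\beta,\delta^{-\chi})$-subset at the finer scale, so a fibrewise application of a one-scale sumset inequality (of Pl\"unnecke--Ruzsa or Balog--Szemer\'edi--Gowers type) transfers the $\Delta$-scale lower bound on $|A+cB|_{\Delta}$ into a $\delta$-scale lower bound, with polynomial loss. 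Throughout, the exponent $\chi$ in the conclusion must be chosen polynomially small in the gap $\beta+\gamma-\alpha$, so as to absorb every $\delta^{-\chi}$ Frostman constant introduced at the various scale changes; the robustness of the conclusion for arbitrary large $G\subset A\times B$ is preserved automatically if the refinements of $G$ are tracked through each step.
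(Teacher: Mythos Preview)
Your proposal rests on a misreading of the Frostman hypothesis. Applying Definition~\ref{def:Frostman} at $r=\delta$ shows that any non-empty Frostman $(\delta,\beta,\delta^{-\chi})$-set $B$ satisfies $|B|_{\delta}\geq\delta^{\chi-\beta}$; thus your pigeonholed exponent $\beta_{1}$ necessarily lies in $[\beta-\chi,1]$, not in $(0,\beta]$. The ``genuinely new case'' $\beta_{1}+\gamma_{1}\le\alpha$ therefore cannot occur once $\chi<\tfrac{1}{2}(\beta+\gamma-\alpha)$, and the scale-change machinery you sketch is aimed at a phantom obstacle. Relatedly, your ``a fortiori'' goes the wrong way: Frostman $(\delta,\beta)$ implies Frostman $(\delta,\beta')$ only for $\beta'\le\beta$, so if $\beta_{1}>\beta$ you do not get a $(\delta,\beta_{1})$-Frostman set for free.

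There is also a circularity. Theorem~\ref{thm:ABCConjecture} \emph{is} (up to the inessential restriction $\beta\le\alpha$) the theorem of \cite{2023arXiv230110199O} you propose to invoke, so your reduction is to the statement itself. In this paper the theorem is quoted as known and used as input to Proposition~\ref{thm:ABC}; the purpose of Section~\ref{s6} is the converse direction, namely to show that the new Theorem~\ref{c3} implies Theorem~\ref{thm:ABCConjecture}. That argument is quite different from yours: one uniformises $A$, uses its branching function (Lemma~\ref{l:combinatorial-weak2}) to locate a coarse scale $\Delta\le\delta^{\bar\alpha-\alpha}$ at which $\mathcal{D}_{\Delta}(A)$ is a Katz--Tao $(\Delta,\bar\alpha)$-set for some $\bar\alpha>\alpha$ with $\beta+\gamma>\bar\alpha$, extracts Katz--Tao $(\Delta,\beta)$- and $(\Delta,\gamma)$-subsets $\bar B\subset B$, $\bar C\subset C$ of near-maximal cardinality, applies Theorem~\ref{c3} at scale $\Delta$ to obtain $|A+cB|_{\Delta}\ge\Delta^{-\epsilon}|A|_{\Delta}$ for some $c\in\bar C$, and then recovers the $\delta$-scale gain via $|A+cB|_{\delta}\gtrsim|A+cB|_{\Delta}\cdot|A|_{\Delta\to\delta}$. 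The key point you are missing is that the absence of any non-concentration hypothesis on $A$ is what needs work; it is handled by passing to a scale where $A$ becomes Katz--Tao, not by adjusting the exponents of $B$ and $C$.
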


\begin{remark} The constant $\chi = \chi(\alpha,\beta,\gamma) > 0$ is bounded from below when the triple $(\alpha,\beta,\gamma)$ ranges in any compact subset of the domain
\begin{equation}\label{def:Omega} \Omega_{\mathrm{ABC}} := \{(\alpha',\beta',\gamma') : \alpha' \in [0,1), \, \beta' \in (0,1] \text{ and } \gamma' \in (\max\{0,\alpha' - \beta'\},1]\}. \end{equation}
One can either see this by tracking the constants in the proof given in \cite[Theorem 1.6]{2023arXiv230110199O}, or by an \emph{a posteriori} compactness argument like in \cite[Remark 6.2]{2023arXiv230808819R}.
\end{remark} 

Here $|H|_{\delta}$ refers to the number of dyadic $\delta$-cubes intersecting $H \subset \R^{d}$ (or the usual $\delta$-covering number). The notion of Frostman $(\delta,s,C)$-sets is the following one:
\begin{definition}[Frostman $(\delta,s,C)$-set]\label{def:Frostman} Let $s \geq 0$, $C > 0$, and $\delta \in 2^{-\N}$. A set $P \subset \R^{d}$ is called a \emph{Frostman $(\delta,s,C)$-set} if 
\begin{displaymath} |P \cap B(x,r)|_{\delta} \leq Cr^{s}|P|_{\delta}, \qquad x \in \R^{d}, \, r \geq \delta. \end{displaymath} \end{definition} 
Roughly speaking, Theorem \ref{thm:ABCConjecture} implies \cite[Theorem 1.6]{2023arXiv230110199O}, because the sets $A,B,C$ (as in \eqref{dimensionConditions}) contain Frostman $(\delta,s)$-sets with exponents $s \in \{\Hd A,\Hd B,\Hd C\} =: \{\alpha,\beta,\gamma\}$. Therefore, the conditions \eqref{dimensionConditions} imply the condition $\beta + \gamma > \alpha$ in Theorem \ref{thm:ABCConjecture}.

\begin{remark} Theorem \ref{thm:ABCConjecture} is \cite[Theorem 1.7]{2023arXiv230110199O}, except that in \cite[Theorem 1.7]{2023arXiv230110199O} it is assumed that $\beta \leq \alpha$. However, the cases $\beta > \alpha$ (and even $\beta = \alpha$) follow from an earlier result of Bourgain \cite[Theorem 3]{Bo2}, or more precisely the refined version of his result established by He \cite[Theorem 1]{MR4148151}.   \end{remark} 

We mentioned that \cite[Theorem 1.6]{2023arXiv230110199O} is the sharp answer to the $ABC$ sum-product problem when "size" is measured by Hausdorff dimension. This however does not imply that Theorem \ref{thm:ABCConjecture} would be the optimal answer to the $ABC$ sum-product problem when "size" is measured by $|\cdot|_{\delta}$. One could e.g. ask if $\max_{c \in C} |A + cB|_{\delta} \gg |A|_{\delta}$ holds under the most straightforward counterpart of \eqref{discreteConditions}:
\begin{equation}\label{discretisedConditions} \max\{|B|_{\delta},|C|_{\delta}\} \gg 1 \quad \text{and} \quad |B|_{\delta}|C|_{\delta} \gg |A|_{\delta} \quad \text{and} \quad |A|_{\delta} \ll \delta^{-1}. \end{equation}
(The last condition is evidently necessary in the $\delta$-discretised problem.) The conditions \eqref{discretisedConditions} are formally weaker than those in Theorem \ref{thm:ABCConjecture}. The conditions in Theorem  \ref{thm:ABCConjecture}(B)-(C) imply $\diam(B) \gtrsim \delta^{\chi/\beta}$ and $\diam(C) \gtrsim \delta^{\chi/\gamma}$. Since $\chi > 0$ is very small, Theorem \ref{thm:ABCConjecture} sheds almost no light in a situation where either $B$ or $C$ has small diameter. In contrast, \eqref{discretisedConditions} holds in many such cases. 

Unfortunately, the conditions \eqref{discreteConditions} are far too weak to yield $|A + BC|_{\delta} \gg |A|$. As a fundamental example, consider a case where $1 \ll |B|_{\delta} \leq |A|_{\delta} \ll \delta^{-1}$, and $A \subset [0,\delta|A|_{\delta}]$ and $B \subset [0,\delta |B|_{\delta}]$. Then $|A + B [0,1]|_{\delta} \lesssim |A|_{\delta}$. This shows that some non-concentration conditions on $A$ or $B$ are necessary for positive non-trivial results.

The following non-concentration condition was introduced by Katz and Tao \cite{KT01}:
\begin{definition}[Katz-Tao $(\delta,s,C)$-set]\label{def:KT} Let $s \geq 0$, $C,\delta > 0$, and $\delta \in 2^{-\N}$. A set $P \subset \R^{d}$ is called a \emph{Katz-Tao $(\delta,s,C)$-set} if 
\begin{displaymath} |P \cap B(x,r)|_{\delta} \leq C\left(\frac{r}{\delta} \right)^{s}, \qquad x \in \R^{d}, \, r \geq \delta. \end{displaymath}
A Katz-Tao $(\delta,s,1)$-set is called, in brief, a Katz-Tao $(\delta,s)$-set. \end{definition} 

A Katz-Tao $(\delta,s)$-set of cardinality $\geq \delta^{-s}/C$ is a Frostman $(\delta,s,C)$-set, and conversely a Frostman $(\delta,s,C)$-set contains a Katz-Tao $(\delta,s)$-set of cardinality $\gtrsim \delta^{-s}/C$ (see e.g. the proof of \cite[Proposition 3.9]{OrponenIncidenceGeometry}). Using these facts, Theorem \ref{thm:ABCConjecture} can be equivalently formulated in terms of Katz-Tao conditions. Just replace (B)-(C) by the following:
\begin{itemize}
\item[(B')] $B$ is a Katz-Tao $(\delta,\beta)$-set with $|B| \geq \delta^{\chi - \beta}$.
\item[(C')] $C$ is a Katz-Tao $(\delta,\gamma)$-set with $|C| \geq \delta^{\chi - \gamma}$.
\end{itemize}
This shows that Theorem \ref{thm:ABCConjecture} (only) concerns Katz-Tao sets $B$ and $C$ whose cardinality roughly matches their non-concentration exponent. The purpose of this paper is to prove two variants of Theorem \ref{thm:ABCConjecture}, where this is no longer necessary. Here is the first one:

\begin{thm}\label{c3} For every $\alpha \in (0,1)$, $\beta,\gamma \in [\alpha,1]$, and $\eta > 0$ there exist $\delta_{0},\epsilon > 0$ such that the following holds for all $\delta \in 2^{-\N} \cap (0,\delta_{0}]$. 

Let $A,B,C \subset \delta \Z \cap [0,1]$ be sets satisfying the following properties:
\begin{itemize}
\item[(A)] $A$ is a Katz-Tao $(\delta,\alpha)$-set;
\item[(B)] $B$ is a Katz-Tao $(\delta,\beta)$-set;
\item[(C)] $C$ is a Katz-Tao $(\delta,\gamma)$-set;
\item[($\Pi$)] $|B|^{\gamma}|C|^{\beta}\delta^{\beta\gamma} \geq \delta^{-\eta}$.
\end{itemize} Then there exists $c \in C$ such that 
\begin{equation}\label{expansion} |\{a + cb : (a,b) \in G\}|_{\delta} \geq \delta^{-\epsilon}|A|, \qquad G \subset A \times B, \, |G| \geq \delta^{\epsilon}|A||B|. \end{equation} \end{thm}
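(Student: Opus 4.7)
I plan to reduce Theorem \ref{c3} to Theorem \ref{thm:ABCConjecture} in its equivalent Katz-Tao reformulation (B')--(C'), by extracting from $B$ and $C$ near-full-density Katz-Tao sub-sets at appropriately lowered exponents. Writing $|B| = \delta^{-\beta_{\ast}}$ and $|C| = \delta^{-\gamma_{\ast}}$, the Katz-Tao hypotheses force $\beta_{\ast} \in [0,\beta]$ and $\gamma_{\ast} \in [0,\gamma]$, while hypothesis $(\Pi)$ translates to $\beta_{\ast}\gamma + \gamma_{\ast}\beta \geq \beta\gamma + \eta$. Since $\alpha \leq \beta,\gamma \leq 1$, this gives
$$\beta_{\ast} + \gamma_{\ast} \;=\; \beta \cdot \tfrac{\beta_{\ast}}{\beta} + \gamma \cdot \tfrac{\gamma_{\ast}}{\gamma} \;\geq\; \alpha\Bigl(\tfrac{\beta_{\ast}}{\beta} + \tfrac{\gamma_{\ast}}{\gamma}\Bigr) \;\geq\; \alpha\Bigl(1 + \tfrac{\eta}{\beta\gamma}\Bigr) \;\geq\; \alpha + \alpha\eta,$$
so $(\alpha,\beta_{\ast},\gamma_{\ast})$ lies in a compact subset of $\Omega_{\mathrm{ABC}}$ depending only on $\alpha,\beta,\gamma,\eta$, and Theorem \ref{thm:ABCConjecture} at these parameters will produce a target constant $\chi = \chi(\alpha,\beta,\gamma,\eta) > 0$.

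The central step is a standard multi-scale dyadic pigeonholing across the $\log_{2}(1/\delta)$ dyadic scales in $[\delta,1]$, which extracts subsets $B^{\ast} \subset B$ and $C^{\ast} \subset C$ of cardinalities $|B^{\ast}| \geq \delta^{\chi'}|B|$ and $|C^{\ast}| \geq \delta^{\chi'}|C|$ exhibiting essentially uniform branching at every dyadic scale, for any prescribed $\chi' > 0$ once $\delta$ is small enough. Uniformity of the branching combined with $|B^{\ast}| \approx \delta^{-\beta_{\ast}}$ makes $B^{\ast}$ a Katz-Tao $(\delta,\beta_{\ast})$-set of cardinality $\gtrsim \delta^{\chi' - \beta_{\ast}}$, satisfying hypothesis (B') at exponent $\beta_{\ast}$; analogously $C^{\ast}$ satisfies (C') at exponent $\gamma_{\ast}$. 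Invoking Theorem \ref{thm:ABCConjecture} on $(A, B^{\ast}, C^{\ast})$ with $\chi' \ll \chi$ then produces some $c \in C^{\ast} \subset C$ such that $|\{a + cb : (a,b) \in G^{\ast}\}|_{\delta} \geq \delta^{-\chi}|A|$ for every $G^{\ast} \subset A \times B^{\ast}$ with $|G^{\ast}| \geq \delta^{\chi}|A||B^{\ast}|$.

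To transfer the conclusion to a general $G \subset A \times B$ with $|G| \geq \delta^{\epsilon}|A||B|$, the naive restriction $G \mapsto G \cap (A \times B^{\ast})$ is too weak because $G$ could concentrate on $B \setminus B^{\ast}$; I would instead first pigeonhole to the heavy columns $B_{G} := \{b \in B : |G_{b}| \geq \tfrac{1}{2}\delta^{\epsilon}|A|\}$ (of cardinality $\gtrsim \delta^{\epsilon}|B|$) and perform the regularization \emph{inside} $B_{G}$, so that $G \cap (A \times B^{\ast})$ automatically has density $\gtrsim \delta^{\epsilon}$ in $A \times B^{\ast}$ and the Theorem \ref{thm:ABCConjecture} hypothesis is met as soon as $\epsilon < \chi$. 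The main technical obstacle is the regularization step itself: with a loss of only $\delta^{-\chi'}$ it must turn a Katz-Tao $(\delta,\beta)$-set of cardinality $\delta^{-\beta_{\ast}}$ (potentially much smaller than the maximal $\delta^{-\beta}$) into a set satisfying near-maximal-density Katz-Tao at the reduced exponent $\beta_{\ast}$; standard multi-scale pigeonholing achieves the requisite $\delta^{-o(1)}$ loss, but a secondary concern is that the $G$-dependence of $B_{G}$ renders $c$ $G$-dependent, so reconciling this with the $\exists c\,\forall G$ reading of \eqref{expansion} may require a further averaging or union-bound step.
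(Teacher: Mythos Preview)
Your reduction has a genuine gap at the regularisation step. You assert that a $\{2^{-jT}\}$-uniform subset $B^{\ast} \subset B$ with $|B^{\ast}| \approx \delta^{-\beta_{\ast}}$ is automatically a Katz-Tao $(\delta,\beta_{\ast})$-set (equivalently, a Frostman $(\delta,\beta_{\ast})$-set, which is what hypothesis (B') at exponent $\beta_{\ast}$ demands). This is false. Take $\alpha = \beta = \gamma$, and let $B \subset [0,\sqrt{\delta}]$ be a Katz-Tao $(\delta,\alpha)$-set of maximal cardinality $|B| = \delta^{-\alpha/2}$ in that interval. Then $\beta_{\ast} = \alpha/2$, and $B$ is already perfectly uniform, yet $B$ is \emph{not} a Katz-Tao $(\delta,\alpha/2)$-set: at radius $r = \sqrt{\delta}$ we have $|B \cap [0,\sqrt{\delta}]| = \delta^{-\alpha/2}$ while $(r/\delta)^{\alpha/2} = \delta^{-\alpha/4}$. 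Worse, any Katz-Tao $(\delta,\alpha/2)$-subset of $[0,\sqrt{\delta}]$ has cardinality at most $\delta^{-\alpha/4}$, so no subset of $B$ of size $\gtrsim \delta^{\chi'}|B|$ can satisfy (B'). The same obstruction applies to $C$. Uniform branching controls the \emph{product} of the per-scale branching numbers, not their individual sizes; it gives no upper bound on concentration at intermediate scales.

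This is not a technicality: the small-diameter scenario above is exactly the regime in which Theorem \ref{c3} goes beyond Theorem \ref{thm:ABCConjecture} (cf.\ Remarks \ref{rem3} and the one following \ref{rem5}), so a direct reduction of the former to the latter at scale $\delta$ cannot succeed. The paper's proof confronts this head-on. First, Proposition \ref{prop2} rescales to arrange $C \subset [\tfrac{1}{2},1]$, exploiting that condition ($\Pi$) is essentially scale-invariant. Then a multiscale decomposition of the branching function of $C$ locates a pair of scales $\Delta_{j+1} \ll \Delta_{j}$ between which $C$ \emph{is} Frostman with some exponent $\gamma' > 0$, while simultaneously $B$ has large branching between the complementary scales $\delta/\Delta_{j+1}$ and $\delta/\Delta_{j}$. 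At this renormalised scale the paper applies not Theorem \ref{thm:ABCConjecture} but the auxiliary Proposition \ref{thm:ABC}, whose crucial feature is that it requires only a cardinality lower bound on $B$ (no Frostman or Katz-Tao structure), the non-concentration instead being borrowed from the Katz-Tao hypothesis on $A$. Your arithmetic showing $\beta_{\ast} + \gamma_{\ast} > \alpha$ is correct and morally corresponds to the pigeonhole \eqref{form5} in the paper, but the subsequent application must happen at an intermediate scale where $C$ is genuinely Frostman, not at scale $\delta$.
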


We make a few remarks to clarify the statement.

\begin{remark} The hypothesis $\beta,\gamma \in [\alpha,1]$ is not too restrictive. If e.g. $\beta < \alpha \leq \gamma$, note that a Katz-Tao $(\delta,\beta)$-set is automatically a Katz-Tao $(\delta,\alpha)$-set, so Theorem \ref{c3} can be applied with exponents $\alpha,\alpha,\gamma$ instead of $\alpha,\beta,\gamma$. Then \eqref{expansion} holds provided $|B|^{\gamma}|C|^{\alpha}\delta^{\alpha\beta} \geq \delta^{-\eta}$.
\end{remark}

\begin{remark}\label{rem2} For $\beta,\gamma \geq \alpha$, condition ($\Pi$) implies
\begin{displaymath} |B||C| \geq |B|^{\alpha/\beta}|C|^{\alpha/\gamma} = (|B|^{\gamma}|C|^{\beta})^{\alpha/(\beta \gamma)} \gg \delta^{-\alpha} \geq |A|, \end{displaymath}
which we have earlier identified as a necessary condition. Since ($\Pi$) no longer implies $|B||C| \gg |A|$ for $\beta,\gamma < \alpha$, we see that ($\Pi$) is not sufficient for \eqref{expansion} if $\beta,\gamma < \alpha$.
\end{remark}

\begin{remark}\label{rem3} Besides \eqref{discretisedConditions}, another necessary condition for $|A + BC|_{\delta} \gg |A|$ is the following: $\diam(B) \diam(C) \gg \delta$. To see this, note that if $B \subset [0,r_{B}]$ and $C \subset [0,r_{C}]$ with $r_{B}r_{C} \leq \delta$, then $|(a + cb) - a| \leq \delta$ for all $(a,b,c) \in A \times B \times C$, thus $|A + BC|_{\delta} \leq 2|A|$.

Condition ($\Pi$) is the sharp cardinality condition to imply $\diam(B) \diam(C) \gg \delta$, taking into account the Katz-Tao hypotheses of $B$ and $C$. Indeed, by the Katz-Tao conditions, we have the following (in general sharp) inequality:
\begin{displaymath} \diam(B) \diam(C) \geq |B|^{1/\beta}|C|^{1/\gamma}\delta^{2}. \end{displaymath}
 \end{remark}

\begin{remark}\label{rem5} Theorem \ref{c3} implies Theorem \ref{thm:ABCConjecture}. This is not entirely obvious, since the set $A$ in Theorem \ref{thm:ABCConjecture} has no non-concentration hypothesis. However, "morally" any set $A \subset \delta \Z \cap [0,1]$ with $|A| \leq \delta^{-\alpha}$ is Katz-Tao $(\Delta,\alpha)$ at some higher scale $\Delta \in [\delta,1]$, and we may apply Theorem \ref{c3} at that scale. The details are given in Section \ref{s6}.
\end{remark}

\begin{remark} As we discussed, a caveat of Theorem \ref{thm:ABCConjecture} is that it does not cover any situation where either $B$ or $C$ has small diameter. Theorem \ref{c3} does not suffer from this restriction. For example, when $\alpha = \beta = \gamma$, condition ($\Pi$) is (e.g.) satisfied whenever $|B| \gg \delta^{-\alpha/2}$ and $|C| \gg \delta^{-\alpha/2}$. This allows $B,C$ to have diameter (a bit larger than) $\sqrt{\delta}$, which is optimal in the light of Remark \ref{rem3}.  \end{remark} 

As the second main result, we state a variant of Theorem \ref{c3}, where condition (B) is removed at the cost of imposing a mild Frostman condition on $C$, sometimes known (see \cite{2024arXiv241108871W,2025arXiv250921869W}) as the \emph{two-ends condition}. The Frostman property of $C$ implies $\diam(C) \approx_{\delta} 1$, so the "small diameter" obstruction described in Remark \ref{rem3} does not exist. This is visible in Theorem \ref{c4} as a milder version of condition ($\Pi$).

\begin{thm}\label{c4} 

For every $\alpha \in (0,1)$, $\gamma \in [\alpha,1]$, and $\eta > 0$ there exist $\delta_{0},\epsilon > 0$ such that the following holds for all $\delta \in 2^{-\N} \cap (0,\delta_{0}]$. 

Let $A,B,C \subset \delta \Z \cap [0,1]$ be sets satisfying the following properties:
\begin{itemize}
\item[(A)] $A$ is a Katz-Tao $(\delta,\alpha)$-set;
\item[(C)] $C$ is a Katz-Tao $(\delta,\gamma)$-set which is also a Frostman $(\delta,\eta,\delta^{-\epsilon})$-set;
\item[($\Pi$)] $|B|^{\gamma}|C|^{\alpha}\delta^{\alpha\gamma} \geq \delta^{-\eta}$.
\end{itemize} Then there exists $c \in C$ such that 
\begin{equation}\label{expansion2} |\{a + cb : (a,b) \in G\}|_{\delta} \geq \delta^{-\epsilon}|A|, \qquad G \subset A \times B, \, |G| \geq \delta^{\epsilon}|A||B|. \end{equation} \end{thm}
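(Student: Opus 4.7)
My plan is to reduce Theorem~\ref{c4} to Theorem~\ref{c3} by pigeonholing $B$ onto an intermediate-scale ball where, after rescaling, it becomes a Katz-Tao $(\delta',\alpha)$-set, and matching this with a rescaled piece of $C$ whose cardinality is controlled from below by the two-ends (Frostman) condition.

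\textbf{Step 1 (Uniformizing $B$ at an intermediate scale).} Write $|B| = \delta^{-\beta}$ for some $\beta \in [0,1]$. Following the pigeonholing strategy outlined for Remark~\ref{rem5}, I would choose a dyadic scale $\Delta \in [\delta,1]$, balanced so that $\Delta \cdot \delta^{-\beta} \approx (\Delta/\delta)^{\alpha}$, i.e.\ $\Delta \approx \delta^{(\beta-\alpha)/(1-\alpha)}$, together with an interval $I$ of length $\Delta$ such that a sub-piece $B_{0}' \subset B \cap I$ of cardinality $\approx (\Delta/\delta)^{\alpha}$, after rescaling by $\Delta^{-1}$, becomes a Katz-Tao $(\delta',\alpha)$-set $B_{0}$ with $|B_{0}| \approx (\delta')^{-\alpha}$ and $\delta' := \delta/\Delta$. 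This uniformization is the analogue (now applied to $B$ rather than $A$) of the scale-lifting step behind Remark~\ref{rem5}, and costs only a $\delta^{-\epsilon}$-factor.

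\textbf{Step 2 (Rescaling $C$ and applying Theorem~\ref{c3}).} The Frostman $(\delta,\eta,\delta^{-\epsilon})$-condition on $C$ lets me select a $\Delta$-interval $J$ with $|C \cap J| \gtrsim \delta^{\epsilon}\Delta^{\eta}|C|$; rescaling $C \cap J$ by $\Delta^{-1}$ produces a Katz-Tao $(\delta',\gamma)$-set $C_{0}$. Substituting $|B_{0}| \approx (\Delta/\delta)^{\alpha}$ and $|C_{0}| \gtrsim \delta^{\epsilon}\Delta^{\eta}|C|$ into the ($\Pi$)-condition at scale $\delta'$ with exponents $(\alpha,\alpha,\gamma)$, and exploiting $|B|^{\gamma}|C|^{\alpha}\delta^{\alpha\gamma} \geq \delta^{-\eta}$, a direct calculation (using $\Delta = \delta^{(\beta-\alpha)/(1-\alpha)}$) yields $|B_{0}|^{\gamma}|C_{0}|^{\alpha}(\delta')^{\alpha\gamma} \geq (\delta')^{-\eta'}$ for some $\eta' = \eta'(\alpha,\gamma,\eta) > 0$, provided $\epsilon$ is sufficiently small. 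Theorem~\ref{c3} applied to $(A_{0},B_{0},C_{0})$ --- where $A_{0}$ comes from an analogously rescaled $\Delta$-piece of $A$, inheriting the Katz-Tao $(\delta',\alpha)$-property --- then produces $c' \in C_{0}$ and a refined $G_{0} \subset A_{0} \times B_{0}$ satisfying $|\{a_{0}+c'b_{0}:(a_{0},b_{0})\in G_{0}\}|_{\delta'} \geq (\delta')^{-\epsilon'}|A_{0}|$. Translating and rescaling by $\Delta$ converts this into the desired expansion at scale $\delta$ for the corresponding $c \in C$ and $G \subset A \times B$.

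\textbf{Main obstacle.} The principal technical hurdle is balancing losses and gains across these steps: each Frostman or uniformization appeal loses a factor $\delta^{-\epsilon}$, while Theorem~\ref{c3} contributes a gain of $(\delta')^{-\epsilon'}$ which, after rescaling by $\Delta^{\epsilon'}$, remains a positive power of $\delta$ only if $\epsilon'$ is comfortably larger than $\epsilon$. Since $\epsilon'(\alpha,\alpha,\gamma,\eta')$ is bounded below uniformly on compact subsets of the admissible domain of Theorem~\ref{c3}, the argument succeeds by taking $\epsilon$ sufficiently small compared to $\alpha,\gamma,\eta$; the delicate bookkeeping ensuring that the uniformization of $B$ in Step~1 stays compatible with the Frostman-based choice of the $C$-interval in Step~2 (whose scale $\Delta$ is dictated by $B$) is the quantitative heart of the proof.
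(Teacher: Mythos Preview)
Your approach differs from the paper's: rather than reducing to Theorem~\ref{c3}, the paper proves Theorem~\ref{c4} by re-running the proof of Theorem~\ref{c3} (which goes via Proposition~\ref{thm:ABC} and a branching decomposition of $C$), observing that the Frostman hypothesis on $C$ automatically forces $\gamma_{j+1} \geq \eta/2$ for \emph{every} index $j$ in that decomposition (see \eqref{form21a}). Since the Katz-Tao hypothesis on $B$ was only used in the proof of Theorem~\ref{c3} to secure an index with $\gamma_{j+1}$ bounded below, it becomes redundant and can simply be dropped.

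Your reduction-to-Theorem~\ref{c3} strategy has two concrete gaps. First, in Step~2 you invoke the Frostman $(\delta,\eta,\delta^{-\epsilon})$-condition to produce a $\Delta$-interval $J$ with $|C \cap J| \gtrsim \delta^{\epsilon}\Delta^{\eta}|C|$; but the Frostman condition is an \emph{upper} bound $|C \cap J| \leq \delta^{-\epsilon}\Delta^{\eta}|C|$, so this inequality points the wrong way. Second, your rescaling does not respect the bilinear form $(a,b,c)\mapsto a+cb$: restricting $A,B,C$ to $\Delta$-intervals and dilating each by $\Delta^{-1}$ sends $cb$ to a quantity scaled by $\Delta^{-2}$, so $A_{0}+C_{0}B_{0}$ is not an affine image of $A+CB$ at scale $\delta'=\delta/\Delta$. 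In the paper the three sets are restricted to intervals of \emph{three different} lengths ($\delta\Delta_{j}/\Delta_{j+1}$ for $A$, $\delta/\Delta_{j+1}$ for $B$, $\Delta_{j}$ for $C$; see the displays around \eqref{form11}), chosen so that $S_{I}(a) + ((c-c_{0})/\Delta_{j})S_{J}(b)$ is genuinely an affine image of $a+(c-c_{0})b$. Achieving this also requires first translating $c\to c-c_{0}$ via Balog--Szemer\'edi--Gowers and Pl\"unnecke--Ruzsa (Claim~\ref{c1}), a step absent from your outline. Finally, the single fixed scale $\Delta=\delta^{(\beta-\alpha)/(1-\alpha)}$ in Step~1 presumes $B$ is essentially uniform; without any hypothesis on $B$ there is no reason a piece of $B$ at that particular scale should be a nearly-maximal Katz-Tao $(\delta',\alpha)$-set.
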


The condition ($\Pi$) is sharp in the following sense: with all the other hypotheses intact, and even if we additionally assume $B$ to be a Katz-Tao $(\delta,\alpha)$-set, the conclusion \eqref{expansion2} may fail if ($\Pi$) is relaxed to $|B|^{\gamma}|C|^{\alpha}\delta^{\alpha\gamma} \geq \delta^{\eta}$.

\begin{ex}\label{ex1} Let $0 < \eta < \min\{\alpha,\gamma\} \leq \max\{\alpha,\gamma\} \leq 1$ and $\beta \in (0,\alpha - \eta)$. Then, the following holds for $\delta \in 2^{-\N}$ small enough. There exist sets $A,B,C$ satisfying \textup{(A)-(C)} of Theorem \ref{c4}, such that $B$ is also a $(\delta,\alpha)$-Katz-Tao set with $|B| = \delta^{-\beta}$, such that $|B|^{\gamma}|C|^{\alpha}\delta^{\alpha \gamma} \geq \delta^{\eta}$, and
\begin{displaymath} |A + BC|_{\delta} \lesssim |A|. \end{displaymath}

Let $A_{0},B_{0},C_{0} \subset [0,1]$ be arithmetic progressions (AP) containing $0$, as follows:
\begin{itemize}
\item $B_{0} \subset [0,1]$ is an AP with cardinality $|B_{0}| = \delta^{-\beta}$ and spacing $|B_{0}|^{-1} = \delta^{\beta}$.
\item $A_{0} \subset [0,1]$ is an AP with $|A_{0}| = |B_{0}|^{\alpha/(\alpha - \eta)} > |B_{0}|$ and spacing $|A_{0}|^{-1}$.
\item $C_{0} \subset [0,1]$ is an AP with $|C_{0}| = |B_{0}|^{\eta/(\alpha - \eta)} = |A_{0}|/|B_{0}|$ and spacing $|C_{0}|^{-1}$.
\end{itemize}
It is easy to check that $|A_{0} + B_{0}C_{0}| \sim |A_{0}|$. Next, fix $\delta > 0$, write $\Delta := \delta|A_{0}|^{1/\alpha} \in [\delta,1]$ (the condition $\beta \leq \alpha - \eta$ ensures that $\Delta \leq 1$), and define $A,B,C$ (tentatively) as follows:
\begin{itemize}
\item $A := \Delta A_{0} \subset [0,\Delta]$.
\item $B := \Delta B_{0} \subset [0,\Delta]$, thus $|B| = |B_{0}| = \delta^{-\beta}$.
\item $C := C_{0} + C_{1}$, where $C_{1} \subset [0,\delta/\Delta]$ is any $\delta$-separated set. The intervals $c_{0} + [0,\delta/\Delta]$ are disjoint for distinct $c_{0} \in C$, because $C_{0}$ is $(\delta/\Delta)^{\eta}$-separated, so $|C| = |C_{0}||C_{1}|$.
\end{itemize}
It is easy to check (from the definition of $\Delta$, and since $\Delta \geq \delta|B_{0}|^{1/\alpha}$) that both $A,B$ are Katz-Tao $(\delta,\alpha)$-sets. Fix $a \in A$, $b \in B$, and $c = c_{0} + c_{1} \in C$, where $c_{0} \in C_{0}$ and $c_{1} \in C_{1}$, and observe that
\begin{displaymath} |(a + cb) - (a + c_{0}b)| = |c_{1}b| \leq (\delta/\Delta) \cdot \Delta = \delta. \end{displaymath}
Therefore $A + CB \subset (A + C_{0}B)_{\delta}$, where $(A + C_{0}B)_{\delta}$ stands for the $\delta$-neighbourhood of the set $A + C_{0}B = \Delta(A_{0} + C_{0}B_{0})$. In particular, $|A + CB|_{\delta} \lesssim |A_{0} + C_{0}B_{0}| \sim |A_{0}|$.

We now fix $C_{1} \subset [0,\delta/\Delta]$ to be any Katz-Tao $(\delta,\gamma)$-set with (maximal) cardinality $|C_{1}| = \Delta^{-\gamma} = \delta^{-\gamma}|A_{0}|^{-\gamma/\alpha}$. As a consequence,
\begin{displaymath} |C| \geq |C_{1}| = \delta^{-\gamma}|A_{0}|^{-\gamma/\alpha} = \delta^{-\gamma}|B|^{-\gamma/(\alpha - \eta)} \quad \Longrightarrow \quad |B|^{\gamma}|C|^{\alpha}\delta^{\alpha \gamma} \geq \delta^{\eta}. \end{displaymath} 
We leave it to the reader to check that $C$ is $(\delta,\eta)$-Frostman. The reason is that $C_{0}$ is $\eta$-Frostman between scales $1$ and $\delta/\Delta$, and and $C_{1}$ is $\gamma$-Frostman (with $\gamma \geq \eta$) between scales $\delta/\Delta$ and $\delta$.

\end{ex}

\subsection{Related work} Besides Theorem \ref{thm:ABCConjecture}, there are many $\delta$-discretised sum-product theorems in the literature, see for example \cite{Bo1,Bo2,MR3361775,MR4283564,He19,2023arXiv230602943M,2025arXiv250102131O,2022arXiv221102277P}. 

Sum-product problems are closely related to incidence problems; for instance the necessary and sufficient conditions in \eqref{discreteConditions} followed from the Szemer\'edi-Trotter incidence bound. Recently, Demeter-Wang \cite{MR4869897} and Wang-Wu \cite{2024arXiv241108871W,2025arXiv250921869W} have proved very strong $\delta$-discretised incidence theorems under Katz-Tao hypotheses, so let us discuss their relationship with Theorems \ref{c3} and \ref{c4}.

The main theorem in the most recent work \cite{2025arXiv250921869W} yields the following corollary. Assume that $A,B \subset \delta \Z \cap [0,1]$ are a Katz-Tao $(\delta,\alpha)$-set and a Katz-Tao $(\delta,\beta)$-set, respectively (thus $P := A \times B$ is a Katz-Tao $(\delta,\alpha + \beta)$-set). Assume that $C \subset \delta \Z \cap [0,1]$ is a Katz-Tao $(\delta,\gamma)$-set satisfying the \emph{$2$-ends condition} with $\gamma := \min\{\alpha + \beta,2 - \alpha - \beta\}$. Then
\begin{equation}\label{form40} \max_{c \in C} |A + cB|_{\delta} \gtrapprox |A||B||C|^{1/2} \cdot \delta^{(\alpha + \beta)/2}. \end{equation}
Thus, in \cite{2025arXiv250921869W} the exponent $\gamma$ needs to have a specific relation to $\alpha,\beta$. In this special case, how does the numerology in \eqref{form40} compare to Theorem \ref{c4}? Consider $\alpha + \beta \leq 1$, so $\gamma = \alpha + \beta$. Evidently, \eqref{form40} gives $\max_{c \in C} |A + cB| \gg |A|$ precisely when $|B||C|^{1/2} \delta^{(\alpha + \beta)/2} \gg 1$. Theorem \ref{c4}($\Pi$) requires $|B||C|^{\alpha/(\alpha + \beta)} \delta^{\alpha} \gg 1$. Using $|C| \leq \delta^{-\alpha - \beta}$, one may check that the latter condition is weaker (thus sharper) for $\alpha \leq \beta$, and the former is weaker for $\beta < \alpha$. The explanation is that, for $\beta < \alpha$, any improvement over Theorem \ref{c4} needs to use the Katz-Tao $(\delta,\beta)$-set property of $B$ (which is not assumed in Theorem \ref{c4}). 

It would be interesting to know what the sharp version of Theorem \ref{c4} looks like if one additionally imposes a Katz-Tao $(\delta,\beta)$-set condition on $B$, with $\beta < \alpha$ (Example \ref{ex1} shows that adding a Katz-Tao $(\delta,\alpha)$-set condition on $B$ does not change the numerology). 


\subsection{Proof and paper outline}\label{s:outline} Section \ref{s2} contains preliminaries. Section \ref{s3} contains an auxiliary $ABC$-type result (Proposition \ref{thm:ABC}), which is used to prove Theorems \ref{c3} and \ref{c4}, but is finally superseded by Theorem \ref{c4} (see Remark \ref{rem4}). This auxiliary result is otherwise like Theorem \ref{thm:ABCConjecture}, except that the Frostman hypothesis on $B$ is traded in for a Katz-Tao hypothesis on $A$. The set $C$ is still assumed to be Frostman $(\delta,\gamma)$.

In Section \ref{s4} we prove Theorem \ref{c3}. The main observation is that condition ($\Pi$) implies the existence of scales $\delta \leq \Delta_{1} \ll \Delta_{2} \leq 1$ and numbers $\beta',\gamma'$ with the following properties:
\begin{itemize}
\item[(i)] $|B|_{\delta/\Delta_{1} \to \delta/\Delta_{2}} = (\Delta_{2}/\Delta_{1})^{-\beta'}$ with $\beta' > 0$.
\item[(ii)] $C$ is $\gamma'$-Frostman between scales $\Delta_{2}$ and $\Delta_{1}$ with $\gamma' > 0$.
\item[(iii)] $\beta' + \gamma' > \alpha$.
\end{itemize}
The notation $|H|_{R \to r}$ refers to the "branching" of $H$ between scales $R$ and $r$, see Notation \ref{not1} the precise meaning. The three points above enable us to apply Proposition \ref{thm:ABC} to suitable "pieces" of $A$ and $B$ to get a non-trivial lower bound for $|A + cB|_{\delta \Delta_{j}/\Delta_{j + 1} \to \delta}$. Since $\Delta_{j}/\Delta_{j + 1} \gg 1$, this implies a non-trivial lower bound for $|A + cB|_{\delta}$.

In Section \ref{s5} we indicate the modifications needed in the proof of Theorem \ref{c3} to deduce Theorem \ref{c4}. The main observation is that the condition $\gamma' > 0$ in (ii) above is automatic if $C$ is a Frostman $(\delta,\eta)$-set. Therefore, finding the scales $\Delta_{1},\Delta_{2}$ satisfying (i)-(iii) is formally easier. In fact, their existence is guaranteed by the more relaxed hypothesis $(\Pi)$ in Theorem \ref{c4}, and without the Katz-Tao hypothesis on $B$.

Finally, in Section \ref{s6} we deduce Theorem \ref{thm:ABCConjecture} from Theorem \ref{c3} (Theorem \ref{c4} would work equally well). As mentioned in Remark \ref{rem5}, the main idea is to find a scale $\Delta \in [\delta,1]$ such that $A$ is a Katz-Tao $(\Delta,\alpha)$-set, so Theorem \ref{c3} may be applied at this scale.

\subsection*{Acknowledgements} This project started from discussions with Will O'Regan and Pablo Shmerkin. I'm deeply grateful to both of them for sharing their insights.

\section{Preliminaries}\label{s2}

For $\delta \in 2^{-\mathbb{N}}$ and $P \subset \mathbb{R}^d$, let $\mathcal{D}_{\delta}(P)$ be the dyadic $\delta$-cubes intersecting $P$. We abbreviate $|P|_\delta:=|\mathcal{D}_{\delta}(P)|$ throughout the paper.

\subsection{Frostman and Katz-Tao sets} Frostman and Katz-Tao sets have already been introduced in Definitions \ref{def:Frostman} and \ref{def:KT}. We discuss some of their properties.

\begin{remark} The following property of Katz-Tao sets is elementary, but fundamental enough to deserve a mention. Assume that $P \subset \R^{d}$ is a Katz-Tao $(\delta,s,C)$-set, $\Delta > 0$, and $x_{0} \in \R^{d}$. Let $T(x) = (x - x_{0})/\Delta$. Then $T(P)$ is a Katz-Tao $(\delta/\Delta,s,C)$-set. Indeed,
\begin{displaymath} |T(P) \cap B(x,r)| = |P \cap B(x_{0} + \Delta x,\Delta r)| \leq  C\left(\frac{r}{\delta/\Delta} \right)^{s}, \quad x \in \R^{d}, \, r \geq \delta/\Delta. \end{displaymath} \end{remark} 

\begin{lemma}\label{lemma1} Let $s \geq 0$, $\delta > 0$, and $C \geq 1$. Let $P \subset [0,1]^{d}$ be a Katz-Tao $(\delta,s,C)$-set, and let $\delta \leq \rho \leq 1$. Then $P$ contains a Katz-Tao $(\rho,s,C_{d})$-subset $P'$ of cardinality $|P'| \gtrsim_{d} (\delta/\rho)^{s}|P|/C$, where $C_{d} \geq 1$ is a constant depending only on $d$.  \end{lemma}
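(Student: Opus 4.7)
I would prove the lemma by a greedy--maximality argument combined with a Vitali covering. Fix a constant $C_{d} \geq 1$ depending only on $d$ (to be pinned down at the end) and let $P' \subset P$ be a subset which is maximal, with respect to inclusion, subject to satisfying $|P' \cap B(x,r)|_{\rho} \leq C_{d}(r/\rho)^{s}$ for every $x \in \R^{d}$ and every $r \geq \rho$. The Katz-Tao $(\rho,s,C_{d})$ property then holds for $P'$ by construction, so the real content of the lemma is the cardinality lower bound.

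To access this bound I would exploit the obstructions to maximality: for every $p \in P \setminus P'$, the failure of $P' \cup \{p\}$ to be Katz-Tao $(\rho,s,C_{d})$ at some ball $B_{p} = B(x_{p}, r_{p})$ with $r_{p} \geq \rho$ forces both $|P' \cap B_{p}|_{\rho} \gtrsim C_{d}(r_{p}/\rho)^{s}$ and that the $\rho$-cube containing $p$ meets $B_{p}$, so $p \in B(x_{p}, r_{p} + \sqrt{d}\rho)$. Applying Vitali's $5r$-covering lemma to the enlarged family $\{B(x_{p}, r_{p} + \sqrt{d}\rho)\}_{p \in P \setminus P'}$, I would extract a disjoint subfamily $\{B_{i} = B(x_{i}, r_{i})\}_{i}$ whose $5$-dilates still cover $P \setminus P'$.

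Two estimates then close the proof. First, the Katz-Tao hypothesis at scale $\delta$, applied to each dilate $5B_{i}$, gives
\begin{displaymath}
|P \setminus P'|_{\delta} \leq \sum_{i} |P \cap 5B_{i}|_{\delta} \lesssim_{d} C \sum_{i} (r_{i}/\delta)^{s} = C(\rho/\delta)^{s} \sum_{i} (r_{i}/\rho)^{s}.
\end{displaymath}
Second, disjointness of $\{B_{i}\}$ combined with the saturation estimate for $P'$ yields
\begin{displaymath}
C_{d} \sum_{i} (r_{i}/\rho)^{s} \lesssim_{d} \sum_{i} |P' \cap B_{i}|_{\rho} \lesssim_{d} |P'|_{\rho} \leq |P'|.
\end{displaymath}
Chaining these bounds, $|P|_{\delta} \leq |P'| + |P \setminus P'|_{\delta} \lesssim_{d} (1 + (C/C_{d})(\rho/\delta)^{s})|P'|$, and fixing $C_{d}$ as a sufficiently large constant depending only on $d$ then yields $|P'| \gtrsim_{d} (\delta/\rho)^{s}|P|/C$, as desired.

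The main obstacle is justifying the overlap count in the second display: although $\{B_{i}\}$ are disjoint, each has radius $\geq \rho$, so a single $\rho$-cube of $P'$ might in principle contribute to more than one of the sums $|P' \cap B_{i}|_{\rho}$. A packing argument (using that the centers of disjoint balls of radius $\geq \rho$ are separated by $\geq 2\rho$, while a $\rho$-cube has diameter $\sqrt{d}\rho$) limits this overlap by $O_{d}(1)$; the minor rounding issue when $C_{d}(r_{p}/\rho)^{s}$ is close to $1$ is absorbed into the constants.
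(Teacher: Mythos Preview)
Your proof is correct and self-contained, but it takes a different route from the paper. The paper's argument is a two-step ``content'' approach: first one observes that the measure $\mu := \delta^{s}\mathcal{H}^{0}|_{P}$ satisfies $\mu(B(x,r)) \leq Cr^{s}$ for $r \geq \delta$, which immediately gives the lower bound $\mathcal{H}^{s}_{\rho,\infty}(P) \geq \delta^{s}|P|/C$ on the $\rho$-truncated Hausdorff content; second, one cites an external Frostman-type lemma (\cite[Proposition 3.9]{OrponenIncidenceGeometry}) stating that any set with $\mathcal{H}^{s}_{\rho,\infty}(P) \geq c$ contains a Katz-Tao $(\rho,s,C_{d})$-subset of cardinality $\gtrsim_{d} c\rho^{-s}$.

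Your greedy-plus-Vitali argument is essentially what lies \emph{inside} that cited black box, carried out directly in the present setting without passing through Hausdorff content. The advantage of your approach is that it is elementary and requires no external reference; the advantage of the paper's is brevity and modularity. One small remark: several of your implicit constants (e.g.\ $(1+\sqrt{d})^{s}$ and $5^{s}$ arising from the dilated balls) depend on $s$ as well as $d$, so strictly speaking your $\lesssim_{d}$ should be $\lesssim_{d,s}$ unless one also assumes $s \leq d$; this is harmless in all applications. Your packing justification for the bounded-overlap step is correct: each disjoint $B_{i}$ intersecting a fixed $\rho$-cube $Q$ contains a ball of radius $\rho$ inside a fixed $O_{d}(\rho)$-neighbourhood of $Q$, so volume comparison bounds the count.
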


\begin{remark} In the proof of Lemma \ref{lemma1}, we use the following version of Hausdorff content:
\begin{displaymath} \mathcal{H}^{s}_{\rho,\infty}(E) := \inf \Big\{ \sum_{i = 1}^{\infty} r_{i}^{s} : E \subset \bigcup_{i = 1}^{\infty} B(x_{i},r_{i}) \text{ and } r_{i} \geq \rho \Big\}, \quad E \subset \R^{d}, \, r > 0, \, s \geq 0. \end{displaymath} 
Evidently $\mathcal{H}^{s}_{\rho_{1},\infty}(E) \geq \mathcal{H}^{s}_{\rho_{2},\infty}(E)$ for $\rho_{1} \geq \rho_{2}$.  \end{remark} 

\begin{proof}[Proof of Lemma \ref{lemma1}] Let $\mu := \delta^{s}\mathcal{H}^{0}|_{P}$. Then $\mu(B(x,r)) \leq Cr^{s}$ for all $x \in \R^{d}$ and $\delta \leq r \leq 1$, and $\mu(\R^{d}) = \delta^{s}|P|$, which easily implies
\begin{displaymath} \mathcal{H}^{s}_{\rho,\infty}(P) \geq \mathcal{H}^{s}_{\delta,\infty}(P) \geq \delta^{s}|P|/C. \end{displaymath}
It follows from \cite[Proposition 3.9]{OrponenIncidenceGeometry} that $P$ contains a Katz-Tao $(\rho,s,C_{d})$-subset $P'$ with $|P'| \gtrsim_{d} \mathcal{H}^{s}_{\rho,\infty}(P)\rho^{-s} \geq (\delta/\rho)^{s}|P|/C$. To be accurate, \cite[Proposition 3.9]{OrponenIncidenceGeometry} states the lower bound $|P'| \gtrsim_{d} \mathcal{H}^{s}_{\infty}(P)\rho^{-s}$, but an inspection of the proof shows that all the (dyadic) cubes considered in the argument have side-length $\geq \rho$. \end{proof}  

A standard exhaustion argument gives the following corollary:

\begin{cor}\label{cor2} In addition to the hypotheses of Lemma \ref{lemma1}, let $c > 0$. Then, there exist disjoint Katz-Tao $(\rho,s,C_{d})$-subsets $P_{1},\ldots,P_{N} \subset P$ satisfying
\begin{itemize}
\item $|P_{j}| \gtrsim_{d} c(\delta/\rho)^{s}|P|/C$ \text{ for all } $1 \leq j \leq N$,
\item $|P \, \setminus \, (P_{1} \cup \ldots \cup P_{N})| \leq c|P|$. 
\end{itemize}
\end{cor}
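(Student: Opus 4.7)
The plan is a standard greedy exhaustion. The key observation is that the Katz-Tao property is stable under passing to subsets: if $P$ is a Katz-Tao $(\delta,s,C)$-set and $R \subset P$, then $R$ is also a Katz-Tao $(\delta,s,C)$-set, since $|R \cap B(x,r)|_{\delta} \leq |P \cap B(x,r)|_{\delta} \leq C(r/\delta)^{s}$. In particular, at every stage of the extraction, the remaining portion of $P$ still satisfies the hypotheses of Lemma \ref{lemma1}.

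The procedure runs as follows. Set $R_{1} := P$. Suppose disjoint Katz-Tao $(\rho,s,C_{d})$-subsets $P_{1},\ldots,P_{j-1} \subset P$ have already been chosen, and define
\begin{displaymath} R_{j} := P \setminus (P_{1} \cup \ldots \cup P_{j - 1}). \end{displaymath}
If $|R_{j}| \leq c|P|$, terminate and set $N := j - 1$. Otherwise, $R_{j}$ is a Katz-Tao $(\delta,s,C)$-set contained in $[0,1]^{d}$ with $|R_{j}| > c|P|$, and Lemma \ref{lemma1} applied to $R_{j}$ produces a Katz-Tao $(\rho,s,C_{d})$-subset $P_{j} \subset R_{j}$ with
\begin{displaymath} |P_{j}| \gtrsim_{d} (\delta/\rho)^{s}|R_{j}|/C > c(\delta/\rho)^{s}|P|/C. \end{displaymath}
By construction the $P_{j}$ are disjoint and contained in $P$.

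Termination is automatic: each $P_{j}$ has cardinality at least $1$, so after at most $|P|$ steps the process must halt. Once it halts, the two required properties of the $P_{j}$ are immediate from the construction (each selection step provides the cardinality lower bound, and the stopping criterion gives the residual bound). There is no real obstacle here; the only point that deserves care is verifying that the Katz-Tao hypothesis is preserved along the iteration, which is the elementary observation above.
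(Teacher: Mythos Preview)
Your proof is correct and follows essentially the same greedy exhaustion argument as the paper: at each stage check whether the remainder has dropped below $c|P|$, and if not, apply Lemma \ref{lemma1} to the remainder (which is still Katz-Tao $(\delta,s,C)$) to extract the next piece. Your version is marginally more careful in that you check the stopping criterion before the first extraction and explicitly argue termination, but the substance is identical.
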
 

\begin{proof} Let $P_{1} \subset P$ be the set provided by one application of Lemma \ref{lemma1}. Assume that disjoint Katz-Tao $(\rho,s,C_{d})$-subsets $P_{1},\ldots,P_{k} \subset P$ have already been defined for some $k \geq 1$, and have cardinality $|P_{j}| \gtrsim_{d} c(\delta/\rho)^{s}|P|/C$. If $|P \, \setminus \, (P_{1} \cup \ldots \cup P_{k})| \leq c|P|$, the proof is completed by setting $N := k$. Otherwise $P' := P \, \setminus \, (P_{1} \cup \ldots \cup P_{k})$ is a Katz-Tao $(\delta,s,C)$-set with cardinality $|P'| \geq c|P|$, so Lemma \ref{lemma1} yields another Katz-Tao $(\rho,s,C_{d})$-subset $P_{k + 1} \subset P'$ with $|P_{k + 1}| \gtrsim_{d} (\delta/\rho)^{s}|P'|/C \geq c(\delta/\rho)^{s}|P|/C$. \end{proof} 

\subsection{Uniform sets and branching functions}\label{s:uniformity} We next recall the concepts of \emph{uniform sets} and their associated \emph{branching functions}.

\begin{definition}[Uniform set]\label{def:uniformity}
Let $m \geq 1$, and let
\begin{displaymath} \delta = \Delta_{m} < \Delta_{m - 1} < \ldots < \Delta_{1} \leq \Delta_{0} = 1 \end{displaymath}
be a sequence of dyadic scales.  We say that a set $P\subset [0,1)^{d}$ is \emph{$\{\Delta_j\}_{j=1}^{m}$-uniform} if there is a sequence $\{N_j\}_{j=1}^{m}$ such that $N_{j} \in 2^{\N}$ and $|P\cap Q|_{\Delta_{j}} = N_j$ for all $j\in \{1,\ldots,m\}$ and all $Q\in\mathcal{D}_{\Delta_{j - 1}}(P)$. 
We also extend this definition to $\mathcal{P}\subset\mathcal{D}_{\delta}([0,1)^{d})$ by applying it to $\cup\mathcal{P}$.
\end{definition}

\begin{notation}\label{not1} If $P \subset [0,1)^{d}$ is $\{\Delta_{j}\}_{j = 1}^{m}$-uniform, and $0 \leq i < j \leq n$, the map $Q \mapsto |P \cap Q|_{\Delta_{j}}$ is constant on $\mathcal{D}_{\Delta_{i}}(P)$. We denote the value of this constant $|P|_{\Delta_{i} \to \Delta_{j}}$ ("the branching of $P$ between scales $\Delta_{i}$ and $\Delta_{j}$"). \end{notation} 

For uniform sets, the Frostman $(\delta,s)$-set property implies a Frostman $(\Delta,s)$-set property for $\Delta \in [\delta,1]$. The lemma below is \cite[Lemma 2.17]{2023arXiv230110199O}:

\begin{lemma}\label{OSlemma} Let $\delta \in 2^{-\N}$, $s \in [0,d]$, and $C > 0$. Let $P \subset [0,1)^{d}$ be a Frostman $(\delta,s,C)$-set. Fix $\Delta \in 2^{-\N} \cap [\delta,1]$, and assume that the map
\begin{displaymath} \mathbf{p} \mapsto |P \cap \mathbf{p}|_{\delta}, \qquad \mathbf{p} \in \mathcal{D}_{\Delta}(P), \end{displaymath}
is constant. Then $P$ is a Frostman $(\Delta,s,O_{d}(1)C)$-set.

In particular: if $P \subset [0,1)^{d}$ is $\{2^{-jT}\}_{j = 1}^{m}$-uniform, with $\delta = 2^{-mT}$, then $P$ is a Frostman $(\Delta,s,O_{d}(1)C)$-set for every $\Delta = 2^{-jT}$, for $1 \leq j \leq m$. \end{lemma}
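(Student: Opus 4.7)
Plan. Let $N \in \mathbb{N}$ denote the common value of the map $\mathbf{p} \mapsto |P \cap \mathbf{p}|_{\delta}$ on $\mathcal{D}_{\Delta}(P)$, so that $|P|_{\delta} = N \cdot |P|_{\Delta}$. The goal is then to show
\begin{displaymath} |P \cap B(x,r)|_{\Delta} \leq O_{d}(1) C r^{s} |P|_{\Delta}, \qquad x \in \mathbb{R}^{d}, \, r \geq \Delta. \end{displaymath}
Fix such $x$ and $r$. The key observation is that any cube $\mathbf{p} \in \mathcal{D}_{\Delta}(P)$ which meets $B(x,r)$ has diameter $\sqrt{d}\,\Delta \leq \sqrt{d}\,r$, so $\mathbf{p} \subset B(x, r + \sqrt{d}\,\Delta) \subset B(x, C_{d} r)$ for some dimensional constant $C_{d}$. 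Consequently the $\delta$-cubes making up $P \cap \mathbf{p}$ all lie inside $B(x, C_{d} r)$, and since by hypothesis each such $\mathbf{p}$ contributes exactly $N$ of them (disjointly across $\mathbf{p}$), we obtain
\begin{displaymath} N \cdot |P \cap B(x,r)|_{\Delta} \leq |P \cap B(x, C_{d} r)|_{\delta}. \end{displaymath}

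Next I would invoke the Frostman $(\delta,s,C)$-hypothesis at the radius $C_{d} r \geq \Delta \geq \delta$ to bound the right-hand side by $C (C_{d} r)^{s} |P|_{\delta} = C \cdot C_{d}^{s}\, r^{s} \cdot N |P|_{\Delta}$. Dividing both sides by $N$ and absorbing $C_{d}^{s} \leq C_{d}^{d}$ into the implicit constant gives the desired bound with constant $O_{d}(1) C$.

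For the "in particular" clause, if $P$ is $\{2^{-jT}\}_{j=1}^{m}$-uniform with $\delta = 2^{-mT}$, then by definition the iterated branching $|P|_{\Delta \to \delta}$ is well-defined for each $\Delta = 2^{-jT}$, which is precisely the constancy hypothesis of the first part applied at that $\Delta$. Applying the first part $m$ times then yields the conclusion.

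Main obstacle. There isn't really a substantive difficulty: the argument is purely a bookkeeping step exploiting that the fibers above every $\Delta$-cube in $\mathcal{D}_{\Delta}(P)$ have the same $\delta$-covering size. The only point requiring any care is enlarging $B(x,r)$ to $B(x, C_{d} r)$ so that it contains every $\Delta$-cube it meets; this is harmless because $\Delta \leq r$ and because $s \leq d$ controls the resulting factor $C_{d}^{s}$.
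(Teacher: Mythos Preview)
Your argument is correct and is the standard proof. The paper does not actually supply a proof of this lemma; it simply cites \cite[Lemma 2.17]{2023arXiv230110199O}. Your write-up is exactly the kind of short verification one would expect: use the constancy of the fibre size $N$ to relate $|P \cap B(x,r)|_{\Delta}$ to a $\delta$-covering number via $N \cdot |P \cap B(x,r)|_{\Delta} \leq |P \cap B(x,C_{d}r)|_{\delta}$, apply the Frostman $(\delta,s,C)$-hypothesis at radius $C_{d}r$, and divide by $N$. The minor phrasing ``applying the first part $m$ times'' in the last paragraph is slightly informal (you apply it once for each choice of $\Delta = 2^{-jT}$), but the content is right: uniformity gives $|P \cap \mathbf{p}|_{\delta} = N_{j+1} \cdots N_{m}$ for every $\mathbf{p} \in \mathcal{D}_{2^{-jT}}(P)$, which is precisely the constancy hypothesis.
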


The next lemma produces large uniform subsets, see \cite[Lemma 3.6]{Sh} for the proof.

\begin{lemma} \label{l:uniformization}
Let $P\subset [0,1)^{d}$, $m,T \in \N$, and $\delta := 2^{-mT}$. Let also $\Delta_{j} := 2^{-jT}$ for $0 \leq j \leq m$, so in particular $\delta = \Delta_{m}$. Then, there is a $\{\Delta_j\}_{j=1}^{m}$-uniform set $P'\subset P$ such that
\begin{displaymath}
|P'|_\delta \ge  \left(2T \right)^{-m} |P|_\delta.
\end{displaymath}
In particular, if $\epsilon > 0$ and $T^{-1}\log (2T) \leq \epsilon$, then $|P'|_{\delta} \geq \delta^{\epsilon}|P|_{\delta}$.
\end{lemma}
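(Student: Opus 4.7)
The plan is an iterative dyadic-scale pigeonhole. Build a nested chain $P = P_0 \supset P_1 \supset \cdots \supset P_m =: P'$ in which $P_j$ is $\{\Delta_i\}_{i=1}^{j}$-uniform and $|P_j|_\delta \geq (2T)^{-j} |P|_\delta$; then $P_m$ is the desired $P'$.

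For the inductive step from $P_{j-1}$ to $P_j$, consider for each $Q \in \mathcal{D}_{\Delta_{j-1}}(P_{j-1})$ the branching count $b_Q := |P_{j-1} \cap Q|_{\Delta_j}$. Since $\Delta_j = 2^{-T}\Delta_{j-1}$, this count lies in $\{1, 2, \ldots, 2^{Td}\}$, so it belongs to exactly one dyadic block $[2^k, 2^{k+1})$ with $k$ in a set of at most $Td$ values (exactly $T$ in the one-dimensional setting the paper actually uses). A pigeonhole in $k$ produces a dyadic integer $N_j = 2^{k_j}$ and a subfamily $\mathcal{Q}_j \subset \mathcal{D}_{\Delta_{j-1}}(P_{j-1})$ with $b_Q \in [N_j, 2N_j)$ for every $Q \in \mathcal{Q}_j$, carrying a $\delta$-mass of at least $T^{-1}|P_{j-1}|_\delta$. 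Within each $Q \in \mathcal{Q}_j$, I retain exactly $N_j$ of its $\Delta_j$-children, selecting those of maximal $\delta$-mass; since we keep the top $N_j$ of at most $2N_j$ items ranked by $\delta$-mass, the retained mass in $Q$ is at least $(N_j/b_Q)|P_{j-1} \cap Q|_\delta \geq \tfrac{1}{2}|P_{j-1} \cap Q|_\delta$. Defining $P_j$ as the restriction of $P_{j-1}$ to the surviving $\Delta_j$-subcubes yields $|P_j|_\delta \geq (2T)^{-1}|P_{j-1}|_\delta$, and $P_j$ has the required constant branching $N_j$ at level $j$.

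The one point that rewards care is that discarding the parents outside $\mathcal{Q}_j$ may disturb the uniformity of $P_{j-1}$ at coarser levels: inside a level-$(j-2)$ cube the number of surviving level-$(j-1)$ children need not equal $N_{j-1}$. I would resolve this by organising the level-$j$ pigeonhole one level-$(j-2)$ parent at a time but using a single global $N_j$, and then performing a brief dyadic round-off at level $j-1$ so that each level-$(j-2)$ parent retains the same dyadic number of level-$(j-1)$ children (losing at most another factor $2$ per step, absorbed into the $2T$). Iterating for $j = 1, \ldots, m$ yields $|P'|_\delta \geq (2T)^{-m}|P|_\delta$, and the "in particular" statement is the one-line rearrangement $(2T)^{-m} = 2^{-m\log_2(2T)} \geq 2^{-\epsilon m T} = \delta^\epsilon$, valid whenever $T^{-1}\log(2T) \leq \epsilon$. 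I expect no serious obstacle beyond the uniformity-preserving bookkeeping just described; the rest is a routine iteration of pigeonhole and dyadic rounding.
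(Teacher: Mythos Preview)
Your iterative pigeonhole is the right mechanism, and you correctly spotted the real difficulty in the top-down direction: discarding $\Delta_{j-1}$-parents outside $\mathcal{Q}_j$ destroys the uniformity already established at level $j-1$. However, your proposed repair does not work with the stated loss. Re-uniformising at level $j-1$ after the level-$j$ step means either discarding further $\Delta_{j-1}$-cubes (which then breaks level $j-2$, forcing another repair there, and so on, cascading all the way up) or keeping a common number of $\Delta_{j-1}$-children per $\Delta_{j-2}$-parent, which requires a \emph{second} pigeonhole on that count (another factor $\sim T$, not $2$) and itself discards $\Delta_{j-2}$-parents. Either way the losses compound well beyond $(2T)^{-m}$; the claim that the fix costs ``at most another factor $2$ per step'' is not justified.

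The standard proof cited by the paper (Shmerkin) sidesteps this entirely by reversing the order of processing: uniformise from the finest scale upward, i.e., handle $j = m, m-1, \ldots, 1$ in turn. At step $j$ one pigeonholes on the branching from $\Delta_{j-1}$ to $\Delta_j$, keeps the $\Delta_{j-1}$-cubes whose branching lies in the popular dyadic block, and retains exactly $N_j$ of their $\Delta_j$-children. The point is that pruning whole $\Delta_{j}$-cubes preserves the already-established uniformity at the finer scales $\Delta_{j+1}, \ldots, \Delta_m$, while the disruption it causes (to the branching from $\Delta_{j-2}$ to $\Delta_{j-1}$) concerns a scale that has not yet been processed. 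Thus the $(2T)^{-1}$ loss per step accumulates cleanly to $(2T)^{-m}$ with no repair step. If you simply reverse the induction direction in your argument, the rest of your write-up (including the ``in particular'' rearrangement) goes through verbatim.
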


The next result follows by iterating Lemma \ref{l:uniformization} analogously to the the proof of Corollary \ref{cor2}, see \cite[Corollary 6.9]{2023arXiv230110199O} for the details. 

\begin{proposition}\label{cor1} For every $\epsilon > 0$, there exists $T_{0} = T_{0}(\epsilon) \geq 1$ such that the following holds for all $\delta = 2^{-mT}$ with $m \geq 1$ and $T \geq T_{0}$. Let $P \subset [0,1]^{d}$ be $\delta$-separated. Then, there exist disjoint $\{2^{-jT}\}_{j = 1}^{m}$-uniform subsets $P_{1},\ldots,P_{N} \subset P$ satisfying
\begin{itemize}
\item $|P_{j}| \geq \delta^{2\epsilon}|P|$ for all $1 \leq j \leq N$,
\item $|P \, \setminus \, (P_{1} \cup \ldots \cup P_{N})| \leq \delta^{\epsilon}|P|$.
\end{itemize}
\end{proposition}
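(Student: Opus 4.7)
The plan is to obtain Proposition \ref{cor1} by an exhaustion argument that iterates Lemma \ref{l:uniformization}, entirely analogous to how Corollary \ref{cor2} was deduced from Lemma \ref{lemma1}. First I would choose the threshold $T_{0} = T_{0}(\epsilon) \geq 1$ large enough that $T^{-1}\log(2T) \leq \epsilon$ for every $T \geq T_{0}$. With this choice, the last sentence of Lemma \ref{l:uniformization} guarantees that for any $\delta$-separated subset $Q \subset [0,1]^{d}$ (with $\delta = 2^{-mT}$) there exists a $\{2^{-jT}\}_{j=1}^{m}$-uniform subset $Q' \subset Q$ satisfying $|Q'|_{\delta} \geq \delta^{\epsilon}|Q|_{\delta}$.

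Next I would run the exhaustion. Set $Q_{0} := P$ and proceed inductively: having already produced disjoint $\{2^{-jT}\}_{j=1}^{m}$-uniform subsets $P_{1},\ldots,P_{k} \subset P$, let $Q_{k} := P \setminus (P_{1} \cup \cdots \cup P_{k})$. If $|Q_{k}|_{\delta} \leq \delta^{\epsilon}|P|_{\delta}$, terminate and set $N := k$. Otherwise apply Lemma \ref{l:uniformization} to the $\delta$-separated set $Q_{k}$ to extract a $\{2^{-jT}\}_{j=1}^{m}$-uniform subset $P_{k+1} \subset Q_{k}$ with
\[
|P_{k+1}|_{\delta} \;\geq\; \delta^{\epsilon}|Q_{k}|_{\delta} \;>\; \delta^{\epsilon}\cdot \delta^{\epsilon}|P|_{\delta} \;=\; \delta^{2\epsilon}|P|_{\delta}.
\]
Since the $P_{j}$'s are nonempty and pairwise disjoint in the finite set $P$, the procedure must terminate after finitely many steps. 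At termination, $|P \setminus (P_{1}\cup\cdots\cup P_{N})|_{\delta} = |Q_{N}|_{\delta} \leq \delta^{\epsilon}|P|_{\delta}$, and each extracted piece satisfies $|P_{k}|_{\delta} \geq \delta^{2\epsilon}|P|_{\delta}$, which is exactly the conclusion.

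I do not expect any real obstacle here: the only point to verify is the bookkeeping of the exponent. The factor $\delta^{2\epsilon}$ (rather than $\delta^{\epsilon}$) in the lower bound for $|P_{k}|$ is precisely the price paid for leaving a residual set of size at most $\delta^{\epsilon}|P|_{\delta}$ behind, since each application of Lemma \ref{l:uniformization} costs one factor of $\delta^{\epsilon}$ relative to the set it is applied to, and the stopping condition guarantees that set still has size at least $\delta^{\epsilon}|P|_{\delta}$ when the extraction is performed. No property of $P$ beyond $\delta$-separation is used, so the argument goes through verbatim.
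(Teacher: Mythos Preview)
Your proposal is correct and matches the paper's approach exactly: the paper states that Proposition \ref{cor1} follows by iterating Lemma \ref{l:uniformization} analogously to the proof of Corollary \ref{cor2}, which is precisely the exhaustion argument you wrote out.
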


There is a very useful $1$-Lipschitz function associated to each uniform set:

\begin{definition}[Branching function]\label{branchingFunction} Let $T \in \N$, let $P \subset [0,1)^{d}$ be a $\{2^{-jT}\}_{j = 1}^{m}$-uniform set, and let $\{N_{j}\}_{j = 1}^{m} \subset \{1,\ldots,2^{dT}\}^{m}$ be the associated sequence. We define the \emph{branching function} $f \colon [0,m] \to [0,dm]$ by setting $f(0) = 0$, and
		\begin{displaymath} f(j) := \frac{\log |P|_{2^{-jT}}}{T} = \frac{1}{T} \sum_{i = 1}^{j} \log N_{i}, \qquad i \in \{1,\ldots,m\}, \end{displaymath}
		and then interpolating linearly. \end{definition}

\subsection{Background on Lipschitz functions}

\begin{definition}[$\epsilon$-linear and superlinear functions] \label{def:eps-linear}
Given a function $f:[a,b]\to\R$ and numbers $\epsilon >0,\sigma \in \R$, we say that \emph{$f$ is $(\sigma,\epsilon)$-superlinear on $[a,b]$} if
\[
f(x) \ge f(a)+\sigma(x-a)-\epsilon(b-a),\qquad x\in [a,b].
\]

\end{definition}

The next lemma follows easily from \cite[Lemma 5.21]{SW21}. For full details, see Lemma 2.23 in the first \emph{arXiv} version of \cite{MR4912925}. 

\begin{lemma}\label{l:combinatorial-weak2}
Let $f \colon [0,m] \to [0,d]$ be a non-decreasing piecewise affine $d$-Lipschitz function with $f(0) = 0$. Then, there exist sequences $a_0 < a_1 <\cdots < a_{n} = m$ and $0 \le \sigma_{1}<\sigma_{2}<\cdots <\sigma_{n} \le d$ with the following properties:
\begin{enumerate}
\item $f$ is $(\sigma_{j + 1},0)$-superlinear on $[a_{j},a_{j + 1}]$ with $\sigma_{j + 1} = s_{f}(a_{j},a_{j + 1})$.
\item $\sum_{j = 0}^{n} (a_{j + 1} - a_{j})\sigma_{j + 1} = f(m)$.
\end{enumerate}
\end{lemma}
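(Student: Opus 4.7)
The plan is to obtain $(a_j)$ and $(\sigma_j)$ from the lower convex envelope of $f$, i.e.\ the largest convex function $g \colon [0,m] \to \R$ with $g \leq f$. Since $f$ is piecewise affine, so is $g$, and its breakpoints $a_{0} = 0 < a_{1} < \cdots < a_{n} = m$ are a subset of the breakpoints of $f$ at which $g(a_{j}) = f(a_{j})$. I would construct this envelope inductively rather than invoke it as a black box, as the inductive definition makes properties (1)--(2) transparent.

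Concretely, set $a_{0} := 0$. Given $a_{j} < m$, let
\[
\sigma_{j+1} := \min_{x \in (a_{j}, m]} \frac{f(x) - f(a_{j})}{x - a_{j}},
\]
which is attained because $f$ is piecewise affine (so the incremental-slope function is piecewise constant with finitely many values), and let $a_{j+1}$ be the \emph{largest} $x \in (a_{j}, m]$ attaining this minimum. Stop when $a_{j+1} = m$; the process terminates after finitely many steps because every $a_{j}$ is forced to be a breakpoint of $f$. By construction $f(x) - f(a_{j}) \geq \sigma_{j+1}(x - a_{j})$ for all $x \in [a_{j}, a_{j+1}]$, which is exactly $(\sigma_{j+1}, 0)$-superlinearity, and equality at $x = a_{j+1}$ gives $\sigma_{j+1} = s_{f}(a_{j}, a_{j+1})$. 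Property (2) is then a telescoping sum: $\sum_{j=0}^{n-1}(a_{j+1}-a_{j})\sigma_{j+1} = \sum_{j=0}^{n-1}(f(a_{j+1}) - f(a_{j})) = f(m)$. The bounds $0 \leq \sigma_{j+1} \leq d$ come from $f$ being non-decreasing and $d$-Lipschitz.

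The one subtlety is the strict monotonicity $\sigma_{1} < \sigma_{2} < \cdots < \sigma_{n}$, and this is where the maximality in the choice of $a_{j+1}$ is essential. Observe that the average slope from $a_{j}$ to $a_{j+2}$ is the convex combination
\[
\frac{(a_{j+1}-a_{j})\sigma_{j+1} + (a_{j+2}-a_{j+1})\sigma_{j+2}}{a_{j+2}-a_{j}}.
\]
If $\sigma_{j+2} < \sigma_{j+1}$, this combination is strictly less than $\sigma_{j+1}$, contradicting the minimality of $\sigma_{j+1}$ over $(a_{j}, m]$. If $\sigma_{j+2} = \sigma_{j+1}$, the combination equals $\sigma_{j+1}$, so $a_{j+2}$ also attains the minimum slope from $a_{j}$, contradicting the maximality of $a_{j+1}$. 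This is the only genuinely non-routine step; the rest of the argument is bookkeeping.
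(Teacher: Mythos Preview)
Your argument is correct and essentially the standard lower-convex-envelope construction. The paper itself does not prove this lemma but defers to \cite[Lemma 5.21]{SW21} (and a more detailed write-up elsewhere); your greedy minimum-slope construction is precisely what underlies those references, so the approaches coincide.

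Two small points of tidiness. First, the parenthetical ``the incremental-slope function is piecewise constant'' is not literally true: the function $x \mapsto (f(x)-f(a_j))/(x-a_j)$ is piecewise M\"obius, not piecewise constant. What you actually need is that this function is continuous on $(a_j,m]$ and monotone on each affine piece of $f$, hence its infimum is attained at a breakpoint of $f$ or at $m$. Second, for termination you assert that each $a_{j+1}$ is a breakpoint of $f$, but do not justify it; the reason is exactly the observation just made (the minimum is attained at a breakpoint or at $m$), combined with your maximality choice. Neither of these affects the validity of the argument.
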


A caveat of Lemma \ref{l:combinatorial-weak2} is that it gives no lower bound on the length of the intervals $[a_{j},a_{j + 1}]$. There are several more complex versions of the lemma which provide such additional information (and more), starting from \cite[Lemma 4.6]{Shmerkin22}, see also \cite[Lemma 2.11]{MR4869897}. The version we need here is \cite[Lemma 2.10]{2023arXiv230110199O}, and it follows by combining \cite[Lemmas 5.21 and 5.22]{SW21} (and not \cite[Lemmas 5.20 and 5.21]{SW21}, as written in \cite{2023arXiv230110199O}).

\begin{lemma} \label{l:combinatorial-weak}
For every $\epsilon>0$ there is $\tau=\tau(\epsilon)>0$ such that the following holds: for any non-decreasing $1$-Lipschitz function $f:[0,m]\to\R$ with $f(0)=0$ there exist  sequences
\begin{align*}
0&=a_0 < a_1 <\cdots < a_{n} = m,\\
0&\le \sigma_1 <\sigma_2<\cdots <\sigma_{n} \le 1,
\end{align*}
such that:
\begin{enumerate}
   \item $a_{j+1}-a_j \ge \tau m$.
  \item $f$ is $(\sigma_{j + 1},0)$-superlinear on $[a_{j},a_{j + 1}]$, in particular $\sigma_{j + 1} \leq s_{f}(a_{j},a_{j + 1})$.
  \item $\sum_{j=0}^{n - 1} (a_{j+1}-a_j)\sigma_{j + 1} \ge f(m)-\epsilon m$.
\end{enumerate}
\end{lemma}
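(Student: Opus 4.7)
The plan is to derive Lemma~\ref{l:combinatorial-weak} from Lemma~\ref{l:combinatorial-weak2} by a two-step post-processing of the decomposition: a \emph{binning} step that reduces the number of intervals to $O(1/\epsilon)$, followed by a \emph{length-enforcement} step that absorbs residual short intervals into their lower-slope neighbours. Throughout I fix $K := \lceil 2/\epsilon\rceil$ and $\tau := \epsilon/(2K)$, and for non-piecewise-affine $f$ start with a $1$-Lipschitz piecewise-affine interpolation on a grid of mesh $\ll \tau m$, the approximation error being absorbed into $\epsilon m$.

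\emph{Step 1 (bin the slopes).} Apply Lemma~\ref{l:combinatorial-weak2} to obtain $a_0 < \cdots < a_n$ with strictly increasing slopes $\sigma_1 < \cdots < \sigma_n$. Partition $[0,1]$ into $K$ bins of width $1/K$; because the slopes are strictly increasing, indices whose $\sigma_j$ lies in a common bin form a contiguous block. Merge the corresponding intervals into a single interval and assign it the slope $\sigma := (k-1)/K$, the bin's lower endpoint. Telescoping the old $(\sigma_{j+1},0)$-superlinearities on the constituent subintervals (each $\sigma_{j+1}\ge\sigma$) yields $(\sigma,0)$-superlinearity on the merged interval. The new slopes remain strictly increasing (one per bin), and the weighted-sum loss is at most $m/K \le \epsilon m/2$, because each original $\sigma_{j+1}$ was replaced by a value at most $1/K$ smaller.

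\emph{Step 2 (enforce $a_{j+1}-a_j \ge \tau m$).} At most $K$ intervals remain, some possibly of length $<\tau m$. Iteratively pick a shortest such interval $[a,b]$ and merge it with its left neighbour $[a',a]$ (of strictly smaller slope $\sigma_L<\sigma$), inheriting the smaller slope $\sigma_L$. The $(\sigma_L,0)$-superlinearity on $[a',b]$ is immediate on $[a',a]$, while for $x\in[a,b]$ it follows from
\[
f(x)\ge f(a)+\sigma(x-a)\ge f(a')+\sigma_L(a-a')+\sigma_L(x-a)=f(a')+\sigma_L(x-a').
\]
The merge costs $(\sigma-\sigma_L)(b-a)<\tau m$ in the weighted sum and deletes one entry from the still strictly increasing slope sequence; after $\le K-1$ iterations all intervals satisfy the length bound, at total cost $\le K\tau m \le \epsilon m/2$.

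The combined loss is $\le \epsilon m$, establishing (3); (1) and (2) are built into the construction. The main obstacle is preserving $(\sigma,0)$-superlinearity \emph{with zero error term} through each merge; inheriting the smaller of two adjacent slopes at each step accomplishes this and is exactly what generates the loss $\epsilon m$ one must absorb. The sole edge case---a leftmost short interval with no left neighbour---is handled by a symmetric right-merge together with a small technical adjustment to keep $a_0=0$ fixed, which does not affect the parameter balance $K\sim 1/\epsilon$, $\tau\sim \epsilon^{2}$.
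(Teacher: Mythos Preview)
The paper does not actually give a proof of Lemma~\ref{l:combinatorial-weak}; it only cites \cite[Lemmas 5.21 and 5.22]{SW21}. Your two–step plan (convex minorant via Lemma~\ref{l:combinatorial-weak2}, then slope–binning, then length enforcement by merging) is exactly the standard route those references take, and Steps~1 and the main part of Step~2 are carried out correctly: each left–merge of a short interval $[a,b]$ into its left neighbour $[a',a]$ keeps $(\sigma_L,0)$-superlinearity and costs $(\sigma-\sigma_L)(b-a)<\tau m$, and at most $K-1$ such merges occur.

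There is, however, a genuine gap in your treatment of the leftmost-interval edge case. If after the left–merges the surviving leftmost interval $J_1=[0,b_1]$ is short and its right neighbour $J_2=[b_1,b_2]$ is long, then a ``symmetric right–merge'' must (as you note) inherit the \emph{smaller} slope $\tau_1$, since $(\tau_2,0)$-superlinearity fails on $[0,b_1]$. But then the interval whose slope has been lowered is $J_2$, not $J_1$, so the cost of this single merge is $(\tau_2-\tau_1)\lvert J_2\rvert$, not $(\tau_2-\tau_1)\lvert J_1\rvert$. Since $\lvert J_2\rvert$ can be of order $m$, this is not absorbable into your $\epsilon m$ budget. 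A concrete bad instance already occurs with two bin–intervals $[0,\tau m/2]$ (slope $0$) and $[\tau m/2,m]$ (slope $(K-1)/K$): the only legal merge gives slope $0$ on all of $[0,m]$, and the weighted sum drops from roughly $m$ to $0$. So the sentence ``does not affect the parameter balance'' is not correct as written, and this step needs an argument beyond a symmetric merge. One way to repair it is to choose the breakpoints in Step~2 greedily from the right among the touching points of the convex minorant, so that within each new block the constituent that keeps its slope is the leftmost (hence possibly long) one and all constituents that get their slope lowered lie to its right and have total length $<\tau m$; the residual piece near $0$ then has length $<\tau m$ and can be absorbed with loss $<\tau m$.

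A smaller point: the piecewise–affine interpolant of $f$ on a grid need not satisfy $\tilde f\le f$, so $(\sigma,0)$-superlinearity of $\tilde f$ does not transfer to $f$ with zero error. This is easily fixed either by using an under–approximant such as $\tilde f(x)=\max\bigl(f(x_i),\,f(x_{i+1})-(x_{i+1}-x)\bigr)$ on each grid cell $[x_i,x_{i+1}]$, or by noting that Lemma~\ref{l:combinatorial-weak2} (i.e.\ passing to the greatest convex minorant) works directly for continuous $f$, with the binning then reducing to finitely many pieces.
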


\begin{remark} The price to pay for (1) (as opposed to Lemma \ref{l:combinatorial-weak2}) is that one loses an "$\epsilon$" in (3), and in (2) one has no \emph{a priori} lower control for $\sigma_{j + 1}$ in terms of $s_{f}(a_{j},a_{j + 1})$. One can gain such lower control by trading in $(\sigma_{j + 1},0)$-superlinearity for $(\sigma_{j + 1},\epsilon)$-superlinearity; this is a key novelty of Demeter and Wang's version \cite[Lemma 2.11]{MR4869897}  \end{remark} 

If the branching function of a uniform set is superlinear on an interval, a "renormalised piece" of the set is Frostman, as quantified in \cite[Lemma 8.3]{OS23} (see also \cite[Lemma 2.22]{MR4912925} for the "renormalised" version of the original statement which we actually quote below):

\begin{lemma}\label{lemma2} Let $T \geq 1$, and let $P \subset [0,1)^{d}$ be a $\{2^{-jT}\}_{j = 1}^{m}$-uniform set with branching function $f \colon [0,m] \to [0,dm]$. Let $a,b \in \{0,\ldots,m\}$ with $a < b$, and assume that $f$ is $(\sigma,0)$-superlinear on $[a,b]$. Then the following holds for all $Q \in \mathcal{D}_{2^{-jT}}(P)$. 

Write $\Delta := 2^{-T(b - a)}$. Let $S_{Q} \colon \R^{d} \to \R^{d}$ be the rescaling map taking $Q$ to $[0,1)^{d}$. Then $S_{Q}(P \cap Q)$ is a Frostman $(\Delta,\sigma,O_{d,T}(1))$-set. \end{lemma}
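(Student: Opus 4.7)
The plan is a direct counting argument from uniformity plus the superlinearity of $f$. Writing $\log = \log_{2}$ for brevity, the hypothesis reads $|P \cap Q|_{2^{-jT}} = 2^{T(f(j)-f(a))}$ for every $Q \in \mathcal{D}_{2^{-aT}}(P)$ and every $a \leq j \leq b$ (this is where I take $\mathcal{D}_{2^{-aT}}$, correcting what seems to be a typo in the statement). Applying the rescaling map $S_{Q}$, which sends scale $2^{-jT}$ to scale $2^{-(j-a)T}$, the set $P' := S_{Q}(P \cap Q) \subset [0,1)^{d}$ therefore satisfies $|P'|_{2^{-(j-a)T}} = 2^{T(f(j)-f(a))}$; in particular $|P'|_{\Delta} = 2^{T(f(b)-f(a))}$.

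To verify the Frostman condition at scale $\Delta$, fix $x \in \R^{d}$ and $r \geq \Delta$. If $r \geq 1$, then $|P' \cap B(x,r)|_{\Delta} \leq |P'|_{\Delta} \leq r^{\sigma}|P'|_{\Delta}$ is immediate since $\sigma \geq 0$. Otherwise, I would choose the integer $k = k(r) \in \{a+1,\ldots,b\}$ with $2^{-(k-a)T} \leq r < 2^{-(k-a-1)T}$; the bound $k \leq b$ is precisely $r \geq \Delta$. Then $B(x,r)$ is covered by $O_{d,T}(1)$ dyadic cubes of side $2^{-(k-a)T}$, each the $S_{Q}$-image of some dyadic cube $Q' \subset \R^{d}$ of side $2^{-kT}$. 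By uniformity, $|P \cap Q'|_{2^{-bT}} \leq 2^{T(f(b)-f(k))}$ (trivially zero if $Q'$ misses $P$), so summing over the cover gives
\[
|P' \cap B(x,r)|_{\Delta} \leq O_{d,T}(1)\cdot 2^{T(f(b)-f(k))}.
\]

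Dividing by $|P'|_{\Delta} = 2^{T(f(b)-f(a))}$ and invoking the $(\sigma,0)$-superlinearity $f(k) - f(a) \geq \sigma(k-a)$ (valid for $k \in [a,b]$) yields
\[
\frac{|P' \cap B(x,r)|_{\Delta}}{|P'|_{\Delta}} \leq O_{d,T}(1)\cdot 2^{-T\sigma(k-a)} \leq O_{d,T}(1)\cdot r^{\sigma},
\]
where the last step uses $\sigma \geq 0$ together with $r \geq 2^{-(k-a)T}$. This is exactly the Frostman $(\Delta,\sigma,O_{d,T}(1))$-set bound for $P'$. The proof is essentially bookkeeping; the only mild care is in choosing the scale $k$ (the two endpoint cases $r \geq 1$ and $r = \Delta$ are handled separately or by capping) and in observing that the covering constant depends only on $d$ and $T$, not on $m$ or on the particular interval $[a,b]$, so it is harmless. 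I do not anticipate any genuine obstacle.
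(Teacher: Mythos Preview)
Your argument is correct, including the identification of the typo (the cube $Q$ should lie in $\mathcal{D}_{2^{-aT}}(P)$, not $\mathcal{D}_{2^{-jT}}(P)$). The paper does not give its own proof of this lemma; it quotes it from \cite[Lemma 8.3]{OS23} and \cite[Lemma 2.22]{MR4912925}, and your direct counting argument is essentially the standard one that appears in those sources.
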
 

The Katz-Tao condition can also read off from the behaviour of the branching function (we do not need a "renormalised" version in this case, so the statement is a little simpler):

\begin{lemma}\label{lemma3} Let $T \geq 2$, $\epsilon \geq 0$, and let $P \subset [0,1)^{d}$ be a $\{2^{-jT}\}_{j = 1}^{m}$-uniform set with branching function $f \colon [0,m] \to [0,dm]$. Assume that $f(x) - f(m) \leq \sigma(m - x) + \epsilon m$ for all $x \in [0,m]$. Then $P$ is a Katz-Tao $(\delta,\sigma,O_{d,T}(\delta^{-\epsilon}))$-set with $\delta := 2^{-mT}$. \end{lemma}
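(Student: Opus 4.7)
The proof is routine: uniformity lets me read off the number of $\delta$-cubes of $P$ inside each dyadic ancestor cube $Q \in \mathcal{D}_{2^{-jT}}(P)$ directly from the branching function, and the Katz-Tao bound at a general radius $r \geq \delta$ follows by rounding $r$ up to the nearest dyadic scale $2^{-jT}$. Since $f$ is non-decreasing, I read the stated hypothesis as $f(m) - f(x) \leq \sigma(m - x) + \epsilon m$ for all $x \in [0,m]$ (otherwise the inequality $f(x) - f(m) \leq \sigma(m-x) + \epsilon m$ would be vacuous).

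The first step is to apply Notation \ref{not1}: by uniformity, for each $j \in \{0,1,\ldots,m\}$ and each $Q \in \mathcal{D}_{2^{-jT}}(P)$,
\begin{equation*}
|P \cap Q|_\delta = |P|_{2^{-jT} \to \delta} = 2^{T(f(m) - f(j))}.
\end{equation*}
Plugging in the hypothesis at $x = j$, and using $\delta = 2^{-mT}$, this gives the dyadic bound
\begin{equation*}
|P \cap Q|_{\delta} \leq 2^{T\sigma(m - j)} \cdot 2^{T\epsilon m} = 2^{T\sigma(m-j)} \cdot \delta^{-\epsilon}, \qquad Q \in \mathcal{D}_{2^{-jT}}(P).
\end{equation*}

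Next, I would fix arbitrary $x_{0} \in \R^{d}$ and $r \geq \delta$, and let $j \in \{0,1,\ldots,m\}$ be the smallest non-negative integer with $2^{-jT} \leq r$ (using $P \subset [0,1)^{d}$ to reduce to $r \leq \sqrt{d}$ if necessary, so that the case $j = 0$ is only invoked when $r \sim 1$). Then $r \leq 2^{T} \cdot 2^{-jT}$, so $B(x_{0},r)$ meets at most $O_{d,T}(1)$ cubes of $\mathcal{D}_{2^{-jT}}$; summing the previous display over this covering yields
\begin{equation*}
|P \cap B(x_{0}, r)|_\delta \leq O_{d,T}(1) \cdot 2^{T\sigma(m - j)} \cdot \delta^{-\epsilon}.
\end{equation*}
Since $r \geq 2^{-jT}$ and $\delta = 2^{-mT}$, one has $(r/\delta)^{\sigma} \geq 2^{T\sigma(m-j)}$, so the right-hand side is bounded by $O_{d,T}(\delta^{-\epsilon})(r/\delta)^{\sigma}$, yielding the Katz-Tao $(\delta,\sigma,O_{d,T}(\delta^{-\epsilon}))$-condition.

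The argument has no real obstacle; the only mild subtlety is the dyadic-to-continuous passage, which costs only a $T$-dependent covering factor absorbed into the implicit constant. In contrast with Lemma \ref{lemma2}, no rescaling of a subcube to $[0,1)^{d}$ is needed, because the Katz-Tao property is scale-absolute: a global count over $P$ suffices.
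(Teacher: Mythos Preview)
Your proof is correct and follows essentially the same route as the paper's: both use uniformity to compute $|P \cap Q|_{\delta} = 2^{T(f(m)-f(j))}$ for $Q \in \mathcal{D}_{2^{-jT}}(P)$, apply the hypothesis (which, as you note, should indeed read $f(m)-f(x) \leq \sigma(m-x) + \epsilon m$), and then cover $B(x_{0},r)$ by $O_{d,T}(1)$ dyadic cubes at the nearest scale $2^{-jT} \sim_{T} r$.
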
 

\begin{proof} Fix $j \in \{0,\ldots,m\}$ and $Q \in \mathcal{D}_{2^{-jT}}(P)$. Write $\Delta := 2^{-jT}$. By uniformity,
\begin{displaymath} |P \cap Q|_{\delta} = |P|_{\delta}/|P|_{\Delta}, \end{displaymath}
where $|P|_{\delta} = 2^{Tf(m)}$, and $|P|_{\Delta} = 2^{Tf(j)} \geq 2^{T(f(m) - \sigma(m - j) - \epsilon m)}$. Therefore $|P \cap Q|_{\delta} \leq 2^{\epsilon mT} \cdot 2^{\sigma T(m - j)} = \delta^{-\epsilon}(\Delta/\delta)^{\sigma}$. To obtain a corresponding estimate for $|P \cap B(x,r)|_{\delta}$, with $x \in \R^{d}$ and $\delta \leq r \leq 1$, cover $B(x,r)$ by $O_{d,T}(1)$ cubes of of side $2^{-jT} \sim_{T} r$. 
 \end{proof} 

\subsection{Lemmas from additive combinatorics} We need the asymmetric Balog-Szemer\'edi-Gowers theorem, see the book of Tao and Vu, \cite[Theorem 2.35]{MR2289012}. We state the result in the following slightly weaker form (following Shmerkin's paper \cite[Theorem 3.2]{Sh}):
\begin{thm}[Asymmetric Balog-Szemer\'edi-Gowers theorem]\label{t:BSG} Given $\zeta > 0$, there exists $\epsilon > 0$ such that the following holds for $\delta \in 2^{-\N}$ small enough. Let $A,B \subset \delta \Z \cap [0,1]$ be finite sets, and assume that there exist $c \in [\delta^{\epsilon},1]$ and $G \subset A \times B$ satisfying 
\begin{equation}\label{bsg1} |G| \geq \delta^{\epsilon}|A||B| \quad \text{and} \quad |\{x + cy : (x,y) \in G\}|_{\delta} = |\pi_{c}(G)|_{\delta} \leq \delta^{-\epsilon}|A|. \end{equation}
Then there exist subsets $A' \subset A$ and $B' \subset B$ with the properties
\begin{equation}\label{bsg2} |A'||B'| \geq \delta^{\zeta}|A||B| \quad \text{and} \quad |A' + cB'|_{\delta} \leq \delta^{-\zeta}|A|. \end{equation}
\end{thm}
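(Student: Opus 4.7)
The plan is to reduce the $\delta$-discretised statement to the classical asymmetric Balog--Szemer\'edi--Gowers theorem of Tao--Vu \cite[Theorem 2.35]{MR2289012}, which says (in its quantitatively sharper, exact form): if $X,Y$ are finite additive sets with $E(X,Y) \geq |X|^{3/2}|Y|^{3/2}/K$, then there exist $X' \subset X$ and $Y' \subset Y$ with $|X'| \geq |X|/K^{O(1)}$, $|Y'| \geq |Y|/K^{O(1)}$ and $|X' + Y'| \leq K^{O(1)}|X|^{1/2}|Y|^{1/2}$ (the exponents can be rebalanced via Pl\"unnecke--Ruzsa to obtain the ``asymmetric'' form $|X' + Y'| \lesssim K^{O(1)} \max\{|X|,|Y|\}$). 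All we need is that the constants are polynomial in $K$, so that we can afford $K = \delta^{-O(\epsilon)}$.

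First I would discretise $cB$. Round each element $cb$ to the nearest point of $\delta \Z$ to obtain a multiset $\widetilde{B} \subset \delta \Z$; by pigeonholing $B$ I may assume the rounding is at worst $O(1)$-to-$1$, so $|\widetilde{B}| \sim |B|$, and define $\widetilde{G} \subset A \times \widetilde{B}$ analogously with $|\widetilde{G}| \gtrsim |G| \geq \delta^{\epsilon}|A||B|$. The hypothesis $|\pi_{c}(G)|_{\delta} \leq \delta^{-\epsilon}|A|$ then reads $|\{a + \tilde{b} : (a,\tilde{b}) \in \widetilde{G}\}| \lesssim \delta^{-\epsilon}|A|$ for honest sumsets in $\delta \Z$. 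By the Cauchy--Schwarz/fibre inequality
\begin{displaymath}
|\widetilde{G}|^{2} \leq |\{a + \tilde{b} : (a,\tilde{b}) \in \widetilde{G}\}| \cdot E_{\widetilde{G}}(A,\widetilde{B}),
\end{displaymath}
where $E_{\widetilde{G}}$ counts the additive quadruples from $\widetilde{G}$, I get $E_{\widetilde{G}}(A,\widetilde{B}) \gtrsim \delta^{3\epsilon}|A||B|^{2}$. A standard pigeonholing in ``popularity'' of each fibre (replacing $A,\widetilde{B}$ by subsets of comparable cardinality where the row/column degrees are essentially constant) then yields the classical energy lower bound $E(A,\widetilde{B}) \geq \delta^{O(\epsilon)}(|A||\widetilde{B}|)^{3/2}$ needed to feed Tao--Vu's theorem.

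Next I would apply \cite[Theorem 2.35]{MR2289012} to $X = A$ and $Y = \widetilde{B}$ with $K = \delta^{-O(\epsilon)}$, obtaining subsets $A' \subset A$ and $\widetilde{B}' \subset \widetilde{B}$ of relative sizes at least $\delta^{O(\epsilon)}$ and satisfying $|A' + \widetilde{B}'| \leq \delta^{-O(\epsilon)} \max\{|A|,|B|\}$. Because we want the right-hand side to read $\delta^{-\zeta}|A|$, I would first symmetrise using $|B| \leq \delta^{-1}$: if $|B| \leq |A|$ we are done directly; if $|B| > |A|$, a further Pl\"unnecke--Ruzsa step (replacing the conclusion by $|A' + B''| \leq \delta^{-O(\epsilon)}|A|$ on a further subset $B'' \subset \widetilde{B}'$ of comparable size) handles the asymmetric bound. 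Finally, pull $\widetilde{B}'$ back to $B' \subset B$ with $|B'| \sim |\widetilde{B}'|$; since $a + cb$ lies within $O(\delta)$ of $a + \widetilde{b}$, we conclude $|A' + cB'|_{\delta} \lesssim |A' + \widetilde{B}'| \leq \delta^{-\zeta}|A|$, and $|A'||B'| \geq \delta^{\zeta}|A||B|$, after choosing $\epsilon$ small enough in terms of $\zeta$.

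I expect the only non-routine point to be the bookkeeping of the two consecutive passages ``$\delta$-covering $\Rightarrow$ exact cardinality in $\delta\Z$'' and ``popular-fibre pigeonholing to pass from restricted energy $E_{\widetilde{G}}$ to $E(A,\widetilde{B})$''. Both lose only $\delta^{O(\epsilon)}$ factors, which is compatible with the statement since we are free to choose $\epsilon$ arbitrarily small compared to $\zeta$. The heart of the argument is entirely the classical Tao--Vu theorem; nothing new is being proved about the graph-theoretic BSG mechanism itself.
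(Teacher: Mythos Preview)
Your overall plan --- round $cB$ into $\delta\Z$ to reduce to the case $c = 1$, then invoke the classical Balog--Szemer\'edi--Gowers machinery --- is precisely what the paper does. The paper does not re-derive the $c = 1$ case: it simply cites the already $\delta$-discretised version \cite[Theorem 3.2]{Sh}, and the accompanying remark only supplies the rounding step, identical to your construction of $\widetilde{B}$ and $\widetilde{G}$.

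Your sketched derivation of the $c = 1$ case has a real gap, however. From the hypotheses you correctly obtain the \emph{asymmetric} bound $E(A,\widetilde{B}) \geq \delta^{O(\epsilon)}|A||\widetilde{B}|^{2}$, but you then claim that ``popular-fibre pigeonholing'' upgrades this to the \emph{symmetric} hypothesis $E(A,\widetilde{B}) \geq \delta^{O(\epsilon)}(|A||\widetilde{B}|)^{3/2}$. This fails whenever $|B| \ll |A|$: with $A = \delta\Z \cap [0,\delta^{1/4}]$ and $B = \delta\Z \cap [0,\delta^{3/4}]$ one has $|A + B| \leq 2|A|$, yet $E(A,B) \sim |A||B|^{2} = \delta^{-5/4}$ while $(|A||B|)^{3/2} = \delta^{-3/2}$; the bipartite graph is already complete, so degree-regularisation changes nothing. (Incidentally, your separate worry about the case $|B| > |A|$ is vacuous, since \eqref{bsg1} already forces $|B| \leq \delta^{-O(\epsilon)}|A|$.) The fix is simply to quote \cite[Theorem 2.35]{MR2289012} in its genuinely asymmetric form --- partial-sumset hypothesis $|A +_{G} B| \leq K|A|$, conclusion $|A' + B'| \leq K^{O(1)}|A|$ --- which is exactly what the word ``asymmetric'' signals and how \cite[Theorem 3.2]{Sh} states it. With that correction your plan is complete and matches the paper.
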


\begin{remark} Formally, \cite[Theorem 3.2]{Sh} only covers the case $c = 1$, but the cases $c \in [\delta^{\epsilon},1]$ are easily reduced to the case $c = 1$ by studying the sets $B_{c} := \{[cb]_{\delta} : b \in B\} \subset \delta \Z$ and $G_{c} := \{(a,[cb]_{\delta}) : (a,b) \in G\} \subset A \times B_{c}$, where $[r]_{\delta}$ refers to the element of $\delta \Z$ minimising the distance to $r$. Indeed, $c \in [\delta^{\epsilon},1]$ implies $|B_{c}| \gtrsim \delta^{\epsilon}|B|$ and $|G_{c}| \gtrsim \delta^{\epsilon}|G|$.  \end{remark} 

We will also need the following version of the Pl\"unnecke-Ruzsa inequality:
\begin{lemma}[Pl\"unnecke-Ruzsa inequality]\label{PRIneq} Let $\delta \in 2^{-\N}$, let $A,B_{1},\ldots,B_{n} \subset \R$ be arbitrary sets, and assume that $|A + B_{i}|_{\delta} \leq K_{i}|A|_{\delta}$ for all $1 \leq i \leq n$, and for some constants $K_{i} \geq 1$. Then, there exists a subset $A' \subset A$ with $|A'|_{\delta} \geq \tfrac{1}{2}|A|_{\delta}$ such that
\begin{displaymath} |A' + B_{1} + \ldots + B_{n}|_{\delta} \lesssim_{\epsilon,n} K_{1}\cdots K_{n} |A'|_{\delta}. \end{displaymath}
\end{lemma}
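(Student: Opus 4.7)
The plan is to reduce the $\delta$-discretised statement to the classical (finite set) Plünnecke–Ruzsa inequality, which can be found e.g. in Tao–Vu \cite{MR2289012}. Write $[x]_{\delta}$ for a nearest point in $\delta\Z$ to $x \in \R$, and set $\tilde{A} := \{[a]_{\delta} : a \in A\} \subset \delta\Z$ and $\tilde{B}_{i} := \{[b]_{\delta} : b \in B_{i}\} \subset \delta\Z$ for each $i$. Since passing from $A$ to $\tilde{A}$ just amounts to picking one representative per dyadic $\delta$-cube (up to absorbing a factor of $2$), we have $|\tilde{A}| \sim |A|_{\delta}$, and similarly for $\tilde{B}_{i}$.

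Next I would verify that the hypothesis transfers up to a multiplicative constant: since $[a]_{\delta}+[b]_{\delta}$ lies within $O(\delta)$ of $a+b$, the set $\tilde{A}+\tilde{B}_{i}$ is contained in the $O(\delta)$-neighbourhood of $A+B_{i}$, so that
\[
|\tilde{A}+\tilde{B}_{i}| \;\lesssim\; |A+B_{i}|_{\delta} \;\leq\; K_{i}|A|_{\delta} \;\lesssim\; K_{i}|\tilde{A}|.
\]
Now $\tilde{A},\tilde{B}_{1},\ldots,\tilde{B}_{n}$ are honest finite subsets of the abelian group $\delta\Z$, so the asymmetric Plünnecke–Ruzsa inequality (\cite[Theorem~2.35]{MR2289012}) yields a subset $\tilde{A}' \subset \tilde{A}$ with $|\tilde{A}'| \geq \tfrac{1}{2}|\tilde{A}|$ and
\[
|\tilde{A}' + \tilde{B}_{1} + \cdots + \tilde{B}_{n}| \;\lesssim_{n}\; K_{1}\cdots K_{n}\,|\tilde{A}'|.
\]

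Finally, I would pull this back to $A$ by setting $A' := \{a \in A : [a]_{\delta} \in \tilde{A}'\}$, so that $|A'|_{\delta} = |\tilde{A}'| \geq \tfrac{1}{2}|A|_{\delta}$. The $n$-fold sumset $A' + B_{1} + \cdots + B_{n}$ lies in the $O(n\delta)$-neighbourhood of $\tilde{A}' + \tilde{B}_{1} + \cdots + \tilde{B}_{n}$, hence
\[
|A' + B_{1} + \cdots + B_{n}|_{\delta} \;\lesssim_{n}\; |\tilde{A}' + \tilde{B}_{1} + \cdots + \tilde{B}_{n}| \;\lesssim_{n}\; K_{1}\cdots K_{n}\,|A'|_{\delta},
\]
which is the desired conclusion. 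The argument requires no novel idea — it is a transfer of the classical Plünnecke–Ruzsa theorem through the $\delta$-rounding map — and the only mildly subtle point is bookkeeping the multiplicative constants of the type "$\tilde{A}+\tilde{B}_{i}$ versus $(A+B_{i})_{\delta}$", which I would handle by observing that passing between $\delta$-covering numbers and cardinalities of maximal $\delta$-separated subsets only costs an absolute constant in each direction. This also explains why the implicit constant depends only on $n$ (the apparent $\epsilon$-dependence in the statement is vacuous here).
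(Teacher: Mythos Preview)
Your approach is correct and is exactly what the paper does: rather than giving a self-contained proof, the paper cites Ruzsa \cite{MR2314377} and Gyarmati--Matolcsi--Ruzsa \cite{MR2484645} for the classical finite-set inequality and points to Guth--Katz--Zahl \cite[Corollary~3.4]{MR4283564} for the passage to $\delta$-covering numbers, which is precisely the $\delta$-rounding transfer you wrote out. Two cosmetic caveats: in this paper \cite[Theorem~2.35]{MR2289012} is the Balog--Szemer\'edi--Gowers reference (not Pl\"unnecke--Ruzsa), and your claimed equality $|A'|_{\delta} = |\tilde{A}'|$ is only a two-sided comparability unless you round to the left endpoint of the dyadic $\delta$-interval rather than the nearest point of $\delta\Z$ --- this also means the constant $\tfrac{1}{2}$ is only recovered up to an absolute factor, which is harmless here.
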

This form of the inequality is due to Ruzsa \cite{MR2314377}. For a more general result, see \cite[Theorem 1.5]{MR2484645}, by Gyarmati-Matolcsi-Ruzsa. To be accurate, these statements are not formulated in terms of $\delta$-covering numbers, but one may consult \cite[Corollary 3.4]{MR4283564} by Guth-Katz-Zahl to see how to handle the reduction.

 Finally, we need the following \cite[Exercise 6.5.12]{MR2289012} in the book of Tao and Vu:

\begin{lemma}\label{TVLemma} Let $A,B \subset \delta \Z$, and assume that $|A + B| \leq K|A|$ for some $K \geq 1$. Then, for every $N \geq 1$, there exists $A' \subset A$ with $|A'| \geq \tfrac{1}{2}|A|$ satisfying $|A' - B| \lesssim_{N} K^{2^{N}/N}|A|^{1 + 1/N}$.
\end{lemma}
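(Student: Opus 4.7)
The plan is to prove this by induction on $N$, combining the Pl\"unnecke--Ruzsa inequality (Lemma~\ref{PRIneq}) with the Ruzsa sumset-triangle inequality. For the base case $N = 1$, fixing any $a \in A$ gives $|B| = |\{a\} + B| \leq |A + B| \leq K|A|$, so $|A - B| \leq |A| \cdot |B| \leq K|A|^{2} \leq K^{2}|A|^{2}$, and I take $A' = A$.

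For general $N$, I would first apply Lemma~\ref{PRIneq} with each $B_i := B$ to obtain $A' \subset A$ with $|A'| \geq |A|/2$ and $|A' + kB| \lesssim_{k} K^{k}|A|$ for all $k \in \{1, \ldots, N+1\}$. Next I invoke the Ruzsa sumset-triangle inequality $|A' - B| \cdot |kB| \leq |A' + kB| \cdot |(k+1)B|$, which follows from the injectivity of $(w,z) \mapsto (x(w)+z,\, y(w)+z)$, where for each $w \in A' - B$ one fixes a representation $w = x(w) - y(w)$ with $x(w) \in A'$ and $y(w) \in B$. Combining this with $|(k+1)B| \leq |A' + (k+1)B| \lesssim K^{k+1}|A|$ yields
\[
|A' - B| \;\lesssim_{k}\; \frac{K^{2k+1}|A|^{2}}{|kB|}.
\]
A case split then finishes the argument: either $|kB| \gtrsim |A|^{1 - 1/N}$ for a well-chosen $k$, in which case the display gives the target bound directly, or $|B|$ is so small that the trivial bound $|A - B| \leq |A| \cdot |B|$ already suffices.

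The main obstacle will be the combinatorial bookkeeping that produces the specific exponent $K^{2^{N}/N}|A|^{1 + 1/N}$. Balancing the two regimes above suggests the choice $k \approx 2^{N-1}/N$, and matching the $K$-exponents exactly forces an iterative application of Pl\"unnecke--Ruzsa in which one must carefully track how many times subsets are discarded (the constraint $|A'| \geq |A|/2$ only tolerates $O(1)$-many halvings, which is why one cannot simply cascade the induction freely). Since the statement is Exercise~6.5.12 of \cite{MR2289012}, I would defer the exact accounting to that reference rather than re-derive it here.
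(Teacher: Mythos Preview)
The paper does not prove this lemma; it is stated with a bare citation to \cite[Exercise 6.5.12]{MR2289012}. Your closing sentence, deferring to that reference, therefore matches the paper's treatment exactly.

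The sketch you offer beforehand has the right ingredients (Pl\"unnecke--Ruzsa for $|A'+kB|$, then Ruzsa's triangle inequality), but the final dichotomy has a genuine gap. If $|kB|<|A|^{1-1/N}$ the trivial bound yields only $|A'-B|\leq |A|\,|B|\leq |A|^{2-1/N}$, which is \emph{not} $\lesssim K^{2^{N}/N}|A|^{1+1/N}$ in general (take $K$ bounded and $|A|$ large); one can check directly that for $N\geq 3$ no single choice of $k$ makes both branches close, so as written the argument only reaches $N\in\{1,2\}$. A clean repair avoids the dichotomy: multiply the inequalities $|A'-B|\,|kB|\leq |A'+kB|\,|(k+1)B|$ over $k=1,\ldots,N$ so that the factors $|kB|$ telescope, obtaining $|A'-B|^{N}|B|\lesssim_{N}K^{N^{2}+N+1}|A|^{N+1}$; then multiply once more by the trivial bound $|A'-B|\leq |A|\,|B|$ to get $|A'-B|^{N+1}\lesssim_{N}K^{N^{2}+N+1}|A|^{N+2}$, which after taking $(N+1)$-th roots is actually a bit stronger than the stated estimate.
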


\section{An auxiliary proposition}\label{s3}

We start by establishing the following variant of Theorem \ref{thm:ABCConjecture}, where the Frostman property of $B$ is traded in for a Katz-Tao property of $A$.

\begin{proposition}\label{thm:ABC} For every $\alpha \in [0,1)$ and $\beta,\gamma > 0$ with $\gamma + \beta > \alpha$ there exist $\delta_{0},\epsilon > 0$ such that the following holds for all $\delta \in 2^{-\N} \cap (0,\delta_{0}]$. Let $A,B,C \subset \delta \Z \cap [0,1]$ be sets satisfying the following hypotheses:
\begin{enumerate}
\item[(A)] $A$ is a Katz-Tao $(\delta,\alpha,\delta^{-\epsilon})$-set
\item[(B)] $|B| \geq \delta^{-\beta}$.
\item[(C)] $C$ is a Frostman $(\delta,\gamma,\delta^{-\epsilon})$-set,
\end{enumerate}
Then, there exists $c \in C$ such that
\begin{equation}\label{form13} |\{a + cb : (a,b) \in G\}|_{\delta} \geq \delta^{-\epsilon}|A|, \qquad G \subset A \times B, \, |G| \geq \delta^{\epsilon}|A||B|. \end{equation}
\end{proposition}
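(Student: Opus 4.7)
Plan. I would reduce Proposition~\ref{thm:ABC} to Theorem~\ref{thm:ABCConjecture} through a multi-scale argument: locate an intermediate scale $\delta'\in(\delta,1)$ at which $B$ acquires Frostman branching (forced by the cardinality lower bound $|B|\ge\delta^{-\beta}$), and invoke Theorem~\ref{thm:ABCConjecture} on suitably rescaled pieces of $A$, $B$, and $C$ at scale $\delta'$.

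First I would uniformize $A,B,C$ simultaneously via Proposition~\ref{cor1} at scales $\{\Delta_j=2^{-jT}\}_{j=1}^{m}$ (with $\delta=2^{-mT}$ and $T$ chosen large relative to $\epsilon$), losing only $\delta^{-O(\epsilon)}$ factors in cardinalities and Katz--Tao/Frostman constants. By Lemma~\ref{OSlemma}, $C$ then remains Frostman $(\Delta_j,\gamma,O(1)\delta^{-\epsilon})$ at every scale $\Delta_j$. The branching function $f_B\colon[0,m]\to[0,m]$ satisfies $f_B(m)\ge(\beta-O(\epsilon))m$. Applying Lemma~\ref{l:combinatorial-weak} to $f_B$ with a parameter $\epsilon_1\ll\beta+\gamma-\alpha$ yields a partition $0=a_0<\cdots<a_n=m$ with $a_{j+1}-a_j\ge\tau m$, superlinearity slopes $\sigma_{j+1}$, and $\sum_j(a_{j+1}-a_j)\sigma_{j+1}\ge f_B(m)-\epsilon_1 m$. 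Averaging with $\sum_j(a_{j+1}-a_j)=m$ and using $\beta+\gamma>\alpha$ produces some $j_0$ with $\sigma:=\sigma_{j_0+1}$ satisfying $\sigma+\gamma>\alpha+\eta$ for a constant $\eta>0$. Set $\Delta_1:=2^{-a_{j_0}T}$, $\Delta_2:=2^{-a_{j_0+1}T}$, $\delta':=\Delta_2/\Delta_1$, and $\Delta_A:=\delta/\delta'$.

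Next, I would construct the sets at scale $\delta'$. For a typical $Q_B\in\mathcal{D}_{\Delta_1}(B)$, Lemma~\ref{lemma2} gives that the coarsening of $S_{Q_B}(B\cap Q_B)$ to scale $\delta'$ yields $B^*\subset\delta'\Z\cap[0,1]$, Frostman $(\delta',\sigma,O(1))$. For $Q_A\in\mathcal{D}_{\Delta_A}(A)$, the Katz--Tao hypothesis provides $A^*:=S_{Q_A}(A\cap Q_A)\subset\delta'\Z\cap[0,1]$ with $|A^*|\le\delta^{-\epsilon}(\delta')^{-\alpha}$. The set $C^*:=\mathcal{D}_{\delta'}(C)$ is Frostman $(\delta',\gamma,O(1)\delta^{-\epsilon})$ by Lemma~\ref{OSlemma}. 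Since $\sigma+\gamma>\alpha$, applying Theorem~\ref{thm:ABCConjecture} at scale $\delta'$ to $(A^*,B^*,C^*)$ produces $c^*\in C^*$ with $|A^*+c^*B^*|_{\delta'}\ge(\delta')^{-\chi}|A^*|$ for some $\chi=\chi(\alpha,\sigma,\gamma)>0$.

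Finally, to translate back I would use the identity $(A\cap Q_A)+c(B\cap Q_B)=\mathrm{shift}+\Delta_A A^*+c\Delta_1 B^*$, which, with $\Delta_A=\delta/\delta'$, factors as $\mathrm{shift}+\Delta_A(A^*+(c\Delta_1/\Delta_A)B^*)$. Choosing $c\in C$ so that $c\Delta_1/\Delta_A=c^*$ (equivalently, $c=c^*\delta/\Delta_2$) gives $|(A\cap Q_A)+c(B\cap Q_B)|_\delta\ge(\delta')^{-\chi}|A^*|$, and averaging this bound over $Q_A\in\mathcal{D}_{\Delta_A}(A)$ upgrades it to $|A+cB|_\delta\ge\delta^{-\epsilon}|A|$. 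The main technical obstacle lies here: the factoring forces the physical coefficient $c=c^*\delta/\Delta_2$ to lie in $C\cap[0,\delta/\Delta_2]$, which by Frostman occupies only a small mass of $C$. Resolving this cleanly requires either (i) replacing $C^*$ by a rescaling of a Frostman-large subset of $C\cap[0,\delta/\Delta_2]$, or (ii) handling the complementary regime $c>\delta/\Delta_2$ via an alternative factoring that swaps the roles of $A^*$ and $B^*$ in Theorem~\ref{thm:ABCConjecture} (which requires upgrading $A^*$ from Katz--Tao to Frostman via a cardinality-matching argument), together with care that the cumulative $\delta^{-O(\epsilon)}$ losses from uniformization, coarsening, and rescaling do not swamp the gain $(\delta')^{-\chi}$.
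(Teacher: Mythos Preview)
Your overall architecture matches the paper's: uniformize, apply Lemma~\ref{l:combinatorial-weak} to the branching function of $B$, locate a scale interval where $B$ is genuinely Frostman, and invoke Theorem~\ref{thm:ABCConjecture} there. The gap is precisely the obstacle you flag at the end, and neither of your proposed fixes works. Fix~(i) fails because the Frostman hypothesis on $C$ is an \emph{upper} bound on $|C\cap[0,\delta/\Delta_2]|$, not a lower bound; this set may well be empty (take $C\subset[\tfrac12,1]$). Fix~(ii) is underspecified and does not address the underlying issue, which is that you localize $A$ at scales $[\Delta_A,\delta]$ with $\Delta_A=\delta/\delta'$ while localizing $B$ at scales $[\Delta_1,\Delta_2]$; whenever $\Delta_A\neq\Delta_1$ the rescaling maps $S_{Q_A},S_{Q_B}$ differ by the factor $\Delta_1/\Delta_A$, and this factor must be absorbed into $c$.

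The resolution, which is what the paper does, is to localize $A$ and $B$ at the \emph{same} scale interval: take $I\in\mathcal{D}_{\Delta_j}(A)$ and $J\in\mathcal{D}_{\Delta_j}(B)$, and set $A_I=S_I(\mathcal{D}_{\Delta_{j+1}}(A\cap I))$, $B_J=S_J(\mathcal{D}_{\Delta_{j+1}}(B\cap J))$. Then no rescaling of $c$ is needed, and $C$ coarsened to scale $\Delta=\Delta_{j+1}/\Delta_j$ plays the role of $C^*$ directly. The new difficulty is that the Katz--Tao hypothesis controls $|A|_{\Delta_j\to\delta}$ but not $|A|_{\Delta_j\to\Delta_{j+1}}$, so your simple pigeonholing (``some $\sigma_{j+1}\ge\beta-\epsilon_1$'') is insufficient at a generic $j$. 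The paper handles this by a joint pigeonholing~\eqref{form4} comparing $\beta_{j+1}+\gamma$ to the actual branching $|A|_{\Delta_j\to\Delta_{j+1}}$, using the Katz--Tao bound $\prod_{j\ge n_0}|A|_{\Delta_j\to\Delta_{j+1}}=|A|_{\Delta_{n_0}\to\delta}\le(\Delta_{n_0}/\delta)^\alpha$. Alternatively---and this is the simplest patch to your argument---you can always take $j_0=n-1$: since the slopes are increasing, $\sigma_n\ge\beta-\epsilon_1$ automatically, and here $\Delta_2=\delta$ forces $\Delta_A=\Delta_1$, eliminating the mismatch; moreover $|A^*|=|A|_{\Delta_1\to\delta}\le\delta^{-\epsilon}(\delta')^{-\alpha}$ now follows directly from the Katz--Tao hypothesis. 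The paper's special-case line ``we redefine $\Delta_{j+1}:=\delta$'' is exactly this observation.
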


\begin{remark}\label{rem1} The conclusion "...there exists $c \in C$..." may be upgraded to "...there exist $\geq (1 - \delta^{\epsilon})|C|$ points $c \in C$..." The only caveat is that the constant "$\epsilon$" has to be chosen as "$\epsilon/2$" from the original version. This is because if $C$ is $(\delta,\gamma,\delta^{-\epsilon/2})$-Frostman, and $C' \subset C$ has size $|C'| \geq \delta^{\epsilon/2}|C|$, then $C'$ is $(\delta,\gamma,\delta^{-\epsilon})$-Frostman.

Second, just like in Theorem \ref{thm:ABCConjecture}, the constant $\epsilon > 0$ is uniformly bounded from below on compact subsets of the domain $\Omega_{\mathrm{ABC}}$ introduced in \eqref{def:Omega}.  \end{remark}

\begin{remark}\label{rem4} Proposition \ref{thm:ABC} will become obsolete after Theorem \ref{c4} has been established (but that needs Proposition \ref{thm:ABC}). To see this, first note that Proposition \ref{thm:ABC} can be reduced to the case $\gamma \leq \alpha$ (otherwise take $\gamma := \alpha$, unless $\alpha = 0$ in which case the claim is anyway obvious using $\beta,\gamma > 0$). Now, given $A,B,C$ as in Proposition \ref{thm:ABC}, start by finding a Katz-Tao $(\delta,\gamma)$-subset $C' \subset C$ with $|C'| \gtrsim \delta^{\epsilon - \gamma}$. Then note that $C'$ is also a Katz-Tao $(\delta,\alpha)$-set, and $|B||C'| \geq \delta^{-\beta - \gamma + \epsilon} \geq \delta^{-\alpha - \eta}$ for some $\eta = \eta(\alpha,\beta,\gamma) < \min\{\gamma,\beta + \gamma - \alpha\}$. This means that conditions (C)-($\Pi$) of Theorem \ref{c4} is satisfied (with exponents $\alpha = \gamma$). \end{remark}

\begin{proof}[Proof of Proposition \ref{thm:ABC}] We reduce the proof of Proposition \ref{thm:ABC} to a case where the sets $A,B$ are both $\{2^{-jT}\}_{j = 1}^{m}$-uniform for some $T \in \N$. Let us take for granted that Theorem \ref{thm:ABC} holds under this additional assumption. We assume that $\epsilon > 0$ can be chosen independently of $T$ (so $\epsilon = \epsilon(\alpha,\beta,\gamma) > 0$), but the threshold $\delta_{0} > 0$ may depend on $T$. (The reader may verify from \eqref{form18} that the choice of $\epsilon$ is eventually independent of $T$.)

We then deduce the general case as follows. Fix $\beta' \in (0,\beta)$ arbitrarily such that still $\gamma + \beta' > \alpha$, and apply the (assumed) uniform special case of Proposition \ref{thm:ABC} to produce the constant $\epsilon := \epsilon(\alpha,\beta',\gamma) > 0$. Then, write 
\begin{equation}\label{form17} \epsilon' := \epsilon'(\alpha,\beta,\gamma) := \tfrac{1}{9}\min\{\beta - \beta',\epsilon\}. \end{equation}
We claim that Proposition \ref{thm:ABC} (without any uniformity assumptions) holds with constant "$\epsilon'$". Fix $A,B,C$ as in the statement, thus $|B| \geq \delta^{-\beta}$. Apply Corollary \ref{cor1} with "$4\epsilon'$" to find $T = T(\epsilon') = T(\alpha,\beta,\gamma) \geq 1$ and disjoint $\{2^{-jT}\}_{j = 1}^{m}$-uniform subsets $A_{1},\ldots,A_{M} \subset A$ and $B_{1},\ldots,B_{N} \subset B$ satisfying
\begin{equation}\label{form14a} |A_{i}| \geq \delta^{4\epsilon'}|A| \quad \text{and} \quad |A \, \setminus \, (A_{1} \cup \ldots \cup A_{M})| \leq \delta^{2\epsilon'}|A|, \end{equation}
and
\begin{equation}\label{form14b} |B_{j}| \geq \delta^{4\epsilon'}|B| \quad \text{and} \quad |B \, \setminus \, (B_{1} \cup \ldots \cup B_{N})| \leq \delta^{2\epsilon'}|B|. \end{equation}
(In particular $M,N \leq \delta^{-4\epsilon'}$.) Writing $|B_{j}| =: \delta^{-\beta_{j}}$, it follows from the choice of $\epsilon'$ that $\beta_{j} \geq \beta'$, so the hypotheses of the (assumed) uniform version of Proposition \ref{thm:ABC} with constants $\alpha,\beta',\gamma$ are valid for each triple $A_{i},B_{j},C$. Therefore, we obtain a threshold $\delta_{0} = \delta_{0}(\alpha,\beta',\gamma,T) > 0$, and for each $1 \leq i \leq M$ and $1 \leq j \leq N$ a subset $C_{ij} \subset C$ with $|C_{ij}| \geq (1 - \delta^{\epsilon})|C|$ such that \eqref{form13} holds for each $c \in C_{ij}$, and with $A_{i},B_{j}$ in place of $A,B$:
\begin{equation}\label{form15} |\{a + cb : (a,b) \in G\}|_{\delta} \stackrel{\eqref{form17}}{\geq} \delta^{-\epsilon}|A_{i}| \geq \delta^{-\epsilon'}|A|, \quad G \subset A_{i} \times B_{j}, \, |G| \geq \delta^{\epsilon}|A_{i}||B_{j}|. \end{equation}
Now it remains to prove the same conclusion for $A,B$ instead of $A_{i},B_{j}$, and with constant $\epsilon'$ instead of $\epsilon$. First, note that 
\begin{displaymath} \sum_{i,j} |C \, \setminus \, C_{ij}| \leq MN\delta^{\epsilon}|C| \stackrel{\eqref{form17}}{\leq} \delta^{9\epsilon' - 8\epsilon'}|C| = \delta^{\epsilon'}|C|, \end{displaymath}
so the set $C' := C \, \setminus \, \bigcup_{i,j} C_{ij}$ satisfies $|C'| \geq (1 - \delta^{\epsilon'})|C|$. Now, fix $c \in C'$, and let $G \subset A \times B$ with $|G| \geq \delta^{\epsilon'}|A||B|$. It follows from \eqref{form14a}-\eqref{form14b} that 
\begin{align*} \sum_{i,j} |G \cap (A_{i} \times B_{j})| & = |G \cap (A_{1} \cup \ldots \cup A_{M}) \times (B_{1} \cup \ldots \cup B_{N})|\\
& \geq \tfrac{1}{2}|G| \geq \tfrac{1}{2}\delta^{\epsilon'}|A||B| \geq \tfrac{1}{2}\delta^{\epsilon'}\sum_{i,j} |A_{i}||B_{j}|, \end{align*}
so there exist $i \in \{1,\ldots,M\}$ and $j \in \{1,\ldots,N\}$ such that $|G \cap (A_{i} \times B_{j})| \geq \tfrac{1}{2}\delta^{\epsilon'}|A_{i}||B_{j}| \geq \delta^{\epsilon}|A_{i}||B_{j}|$. Since $c \notin C_{ij}$, we infer from \eqref{form15} that 
\begin{displaymath} |\{a + cb : (a,b) \in G\}|_{\delta} \geq |\{a + cb : (a,b) \in G \cap (A_{i} \times B_{j})|_{\delta} \geq \delta^{-\epsilon'}|A|, \end{displaymath}
as desired. This completes the reduction to the case where $A,B$ are $\{2^{-jT}\}_{j = 1}^{m}$-uniform. 

We may now assume that $A,B$ is $\{2^{-jT}\}_{j = 1}^{m}$-uniform for some $T \in \N$. As a much simpler reduction, we may also assume that $C$ is $\{2^{-jT}\}_{j = 1}^{m}$-uniform; this simply follows by applying Lemma \ref{l:uniformization} (for $T \sim_{\epsilon} 1$) to extract a $\{2^{-jT}\}_{j = 1}^{m}$-uniform subset $C'$ with $|C'| \geq \delta^{\epsilon}|C|$, and noting that $C'$ remains $(\delta,\gamma,\delta^{-2\epsilon})$-Frostman.

We proceed to define constants. Fix $\beta' \in (0,\beta)$ such that still $\beta' + \gamma - \alpha > 0$, and write 
\begin{displaymath} \eta := \eta(\alpha,\beta,\gamma) := \tfrac{1}{2}\min\{\beta',\beta - \beta',\beta' + \gamma - \alpha\} > 0. \end{displaymath}
Let $f \colon [0,m] \to [0,m]$ be the branching function of $B$, and let $\{a_{j}\}_{j = 0}^{n}$ and $\{\beta_{j}\}_{j = 1}^{n} := \{\sigma_{j}\}_{j = 1}^{n}$ be the sequences ("scales and slopes") provided by Lemma \ref{l:combinatorial-weak} applied to $f$ and the constant $\eta$. Write $\Delta_{j} := 2^{-a_{j}T}$, so that $\delta = \Delta_{n} < \Delta_{n - 1} < \ldots < \Delta_{0} = 1$. Recall from Lemma \ref{l:combinatorial-weak}(1) that $\Delta_{j + 1}/\Delta_{j} = 2^{-T(a_{j + 1} - a_{j})} \leq \delta^{\tau}$ for $0 \leq j \leq n - 1$, where 
\begin{equation}\label{def:tau} \tau = \tau(\eta) = \tau(\alpha,\beta,\gamma) > 0. \end{equation}
Finally, Lemma \ref{l:combinatorial-weak}(3) tells us that 
\begin{equation}\label{form16} \prod_{j = 0}^{n - 1} \left(\frac{\Delta_{j + 1}}{\Delta_{j}} \right)^{-\beta_{j + 1}} = 2^{-T\sum_{j = 0}^{n - 1} (a_{j + 1} - a_{j})\beta_{j + 1}} \geq 2^{-Tf(m) + T\eta m} \geq \delta^{-\beta + \eta} \geq \delta^{-\beta'}. \end{equation} 

We are now in a position to describe the constant $\epsilon = \epsilon(\alpha,\beta,\gamma) > 0$ in Proposition \ref{thm:ABC}. Recall $\Omega_{\mathrm{ABC}}$ from \eqref{def:Omega}, and let $K \subset \Omega_{\mathrm{ABC}}$ be the compact set
\begin{equation}\label{def:K} K := \{(\alpha',\beta',\gamma') \in \R^{3} : \alpha' \in [0,\alpha], \, \beta' \in [\eta,1] \text{ and } \gamma \in [\max\{\eta,\alpha' - \beta' + \eta\},1]\}. \end{equation} 
Let $\chi := \inf \{\chi(\alpha',\beta',\gamma') : (\alpha',\beta',\gamma') \in K\} > 0$ be the constant given by the "classical" $ABC$ theorem, Theorem \ref{thm:ABCConjecture}. Then, let 
\begin{equation}\label{form18} \epsilon := \epsilon(\alpha,\beta,\gamma) := \chi \tau/12, \end{equation} 
where $\tau = \tau(\alpha,\beta,\gamma) > 0$ is defined at \eqref{def:tau}. We will show that \eqref{form13} holds with this "$\epsilon$".

We first claim that there exists an index $j \in \{0,\ldots,n - 1\}$ such that $\beta_{j + 1} \geq \eta$, and
\begin{equation}\label{form4} \left(\frac{\Delta_{j + 1}}{\Delta_{j}} \right)^{-\beta_{j + 1} - \gamma} \geq \left(\frac{\Delta_{j + 1}}{\Delta_{j}} \right)^{-\eta} |A|_{\Delta_{j} \to \Delta_{j + 1}}. \end{equation}
Recall that $|A|_{\Delta_{j} \to \Delta_{j + 1}} = |A \cap Q|_{\Delta_{j + 1}}$ for $Q \in \mathcal{D}_{\Delta_{j}}(A)$. Assume that this fails for each $j \in \{0,\ldots,n - 1\}$. Let $n_{0} := \min\{0 \leq j \leq n - 1 : \beta_{j + 1} \geq \eta\}$ (note that $n_{0}$ is well-defined since $\beta_{n} \geq \beta' \geq \eta$ thanks to \eqref{form16}). Thus \eqref{form4} fails for all $j \in \{n_{0},\ldots,n - 1\}$, and
\begin{align*} \delta^{-\beta' - \gamma} & \leq \prod_{j = 0}^{n} \left(\frac{\Delta_{j + 1}}{\Delta_{j}} \right)^{-\beta_{j + 1} - \gamma}\\
& < \prod_{j = 0}^{n_{0} - 1} \left(\frac{\Delta_{j + 1}}{\Delta_{j}} \right)^{-\eta - \gamma} \prod_{j = n_{0}}^{n - 1} \left(\frac{\Delta_{j + 1}}{\Delta_{j}} \right)^{-\eta} |A|_{\Delta_{j} \to \Delta_{j + 1}}\\
& \leq \delta^{-\eta}\Delta_{n_{0}}^{-\gamma}|A|_{\Delta_{n_{0}} \to \delta}. \end{align*} 
(If $n_{0} = 0$, the first product above is empty, and $\beta_{1} = \beta_{n} \geq \beta'$.) The right hand side is at most $\delta^{-\eta}\Delta_{n_{0}}^{-\gamma}(\Delta_{n_{0}}/\delta)^{\alpha} \leq \delta^{-\beta' - \gamma}$ by the Katz-Tao $(\delta,\alpha)$-set property of $A$, and the choice of $\eta$. This is a contradiction.

Now that the index $j \in \{0,\ldots,n - 1\}$ has been found, we fix it for the remainder of the proof. We briefly consider separately a special case where $|A|_{\Delta_{j} \to \Delta_{j + 1}} \geq (\Delta_{j}/\Delta_{j + 1})^{\alpha}$. Then \eqref{form4} implies $\beta_{j + 1} + \gamma \geq \alpha + \eta$. In this case
\begin{displaymath} \left(\frac{\delta}{\Delta_{j}} \right)^{-\beta_{j + 1} - \gamma} \geq \left(\frac{\delta}{\Delta_{j}} \right)^{-\alpha - \eta} \geq \left(\frac{\delta}{\Delta_{j}} \right)^{-\eta}|A|_{\Delta_{j} \to \delta}, \end{displaymath} 
using the Katz-Tao $(\delta,\alpha)$-hypothesis of $A$. In other words, the analogue of \eqref{form4} holds with "$\delta$" in place of "$\Delta_{j + 1}$". Furthermore, $|A|_{\Delta_{j} \to \delta} \leq (\Delta_{j}/\delta)^{\alpha}$, once more by the Katz-Tao hypothesis. Now we redefine $\Delta_{j + 1} := \delta$. Then we have both \eqref{form4} and $|A|_{\Delta_{j} \to \Delta_{j + 1}} \leq (\Delta_{j}/\Delta_{j + 1})^{\alpha}$ simultaneously.

Let $I \in \mathcal{D}_{\Delta_{j}}(A)$ and $J \in \mathcal{D}_{\Delta_{j}}(B)$, and define the following subsets of $\mathcal{D}_{\Delta_{j + 1}/\Delta_{j}}(\R)$: 
\begin{displaymath} A_{I} := S_{I}(\mathcal{D}_{\Delta_{j + 1}}(A \cap I)) \quad \text{and} \quad B_{J} := S_{J}(\mathcal{D}_{\Delta_{j + 1}}(B \cap J)), \end{displaymath}
where $S_{I},S_{J} \colon \R \to \R$ are affine maps rescaling $I,J$ to $[0,1)$. Write $\Delta := \Delta_{j + 1}/\Delta_{j}$. Then:
\begin{itemize}
\item $|A_{I}| = |A|_{\Delta_{j} \to \Delta_{j + 1}} =: \Delta^{-\alpha'}$ with $\alpha' \leq \alpha$,
\item $B_{J}$ is $(\Delta,\beta_{j + 1},O_{T}(1))$-Frostman with $\beta_{j + 1} \geq \eta$ due to the $(\beta_{j + 1},0)$-superlinearity of $f$ on $[a_{j},a_{j + 1}]$, as stated in Lemma \ref{l:combinatorial-weak}(ii), and Lemma \ref{lemma2}. This remains true if we redefined $\Delta_{j + 1} := \delta = \Delta_{n}$, since the slopes $\{\beta_{j}\}$ are increasing (thus, $f$ is also $(\beta_{j + 1},0)$-superlinear on $[a_{j},a_{n}] = [a_{j},m]$).
\item $C$ is a Frostman $(\Delta,\gamma,O(\delta^{-\epsilon}))$-set thanks to the uniformity of $C$, and Lemma \ref{OSlemma}. Moreover, $\delta^{-\epsilon} \leq \Delta^{-\chi}$ by \eqref{form18}. Let $\bar{C} \subset C$ be a maximal $\Delta$-separated subset. Then $\bar{C}$ is a Frostman $(\Delta,\gamma,\Delta^{-\chi})$-set.
\item $\beta_{j + 1} + \gamma \geq \alpha' + \eta$ according to \eqref{form4}. 
\end{itemize}
These properties can be summarised by $(\alpha',\beta_{j + 1},\gamma) \in K$ (recall \eqref{def:K}). It then follows from the definition of $\chi$ (that is: Theorem \ref{thm:ABCConjecture} applied at scale $\Delta$) that there exists a set $C_{IJ} \subset \bar{C}$ of cardinality $|C_{IJ}| \geq (1 - \Delta^{\chi})|\bar{C}|$ such that 
\begin{equation}\label{form7} c \in C_{IJ} \quad \Longrightarrow \quad |\{a + cb : (a,b) \in \mathbf{G}\}|_{\Delta} \geq \Delta^{-\chi}|A_{I}|_{\Delta}, \end{equation} 
whenever $\mathbf{G} \subset A_{I} \times B_{J}$ satisfies $|\mathbf{G}| \geq \Delta^{\chi}|A_{I}||B_{J}|$. 

Next we define the point "$c$" whose existence is claimed in Proposition \ref{thm:ABC}. We start by noting that 
\begin{displaymath} \sum_{c \in \bar{C}} |\{I \times J \in \mathcal{D}_{\Delta_{j}}(A \times B) : c \in C_{IJ}\}| \geq (1 - \Delta^{\chi})|A|_{\Delta_{j}}|B|_{\Delta_{j}}|\bar{C}|. \end{displaymath}
Thus, here exists a point $c \in \bar{C} \subset C$ such that 
\begin{equation}\label{form8} |\{I \times J \in \mathcal{D}_{\Delta_{j}}(A \times B) : c \in C_{IJ}\}| \geq (1 - \Delta^{\chi/2})|A|_{\Delta_{j}}|B|_{\Delta_{j}}. \end{equation}
We claim that \eqref{form13} holds for this "$c$". Let $G \subset A \times B$ be arbitrary with $|G| \geq \delta^{\epsilon}|A||B|$. Then, thanks to the uniformity of $A \times B$,
\begin{displaymath} |\{I \times J \in \mathcal{D}_{\Delta_{j}}(A \times B) : |G \cap (I \times J)| \geq \delta^{2\epsilon}|(A \cap I) \times (B \cap J)|\}| \geq \delta^{2\epsilon}|A|_{\Delta_{j}}|B|_{\Delta_{j}}.\end{displaymath}
Combining this with \eqref{form8}, and noting that $\delta^{2\epsilon} \geq 2\Delta^{\chi/2}$ thanks to \eqref{form18}, we find
\begin{displaymath} |\{I \times J \in \mathcal{D}_{\Delta_{j}}(A \times B) : c \in C_{IJ} \text{ and } |G \cap (I \times J)| \geq \delta^{2\epsilon}|(A \cap I) \times (B \cap J)|\}| \geq \delta^{3\epsilon}|A|_{\Delta_{j}}|B|_{\Delta_{j}}. \end{displaymath}

For $I \times J \in \mathcal{G} \subset \mathcal{D}_{\Delta_{j}}(A) \times \mathcal{D}_{\Delta_{j}}(B)$ fixed, we define the \emph{heavy} $\Delta_{j + 1}$-squares
\begin{displaymath} \mathcal{H}_{IJ} := \{\mathfrak{i} \times \mathfrak{j} \in \mathcal{D}_{\Delta_{j + 1}}(\mathcal{G} \cap (I \times J)) : |G \cap (\mathfrak{i} \times \mathfrak{j})| \geq \delta^{3\epsilon}|(A \cap \mathfrak{i}) \times (B \cap \mathfrak{j})|\}. \end{displaymath}
It follows from $|G \cap (I \times J)| \geq \delta^{2\epsilon}|(A \cap I) \times (B \cap J)|$ and the uniformity of $A \times B$ that $|\mathcal{H}_{IJ}| \geq \delta^{3\epsilon}|A|_{\Delta_{j} \to \Delta_{j + 1}}|B|_{\Delta_{j} \to \Delta_{j + 1}} = \delta^{3\epsilon}|A_{I}||B_{J}|$. We set
\begin{displaymath} \mathbf{G}_{IJ} := (S_{I} \times S_{J})(\mathcal{H}_{IJ}) \subset A_{I} \times B_{J}. \end{displaymath} 
Thus, $|\mathbf{G}_{IJ}| = |\mathcal{H}_{IJ}| \geq \delta^{3\epsilon}|A_{I}||B_{J}| \geq \Delta^{\chi}|A_{I}||B_{J}|$, using also \eqref{form18}. Therefore, we may use \eqref{form7} applied to $\mathbf{G} = \mathbf{G}_{IJ}$  to deduce 
\begin{align} |\{a + cb : (a,b) \in \cup \mathcal{H}_{IJ}\}|_{\Delta_{j + 1}} & = |\{a + cb : (a,b) \in \cup \mathbf{G}_{IJ}\}|_{\Delta} \notag\\
&\label{form19} \stackrel{\eqref{form7}}{\geq} \Delta^{-\chi}|A_{I}|_{\Delta} = \Delta^{-\chi}|A|_{\Delta_{j} \to \Delta_{j + 1}}. \end{align}
To deduce a lower bound for $|\{a + cb: (a,b) \in G\}|_{\delta}$, we first note that for $\mathfrak{i} \times \mathfrak{j} \in \mathcal{H}_{IJ}$ fixed,
\begin{displaymath} |\{a + cb : (a,b) \in G \cap (\mathfrak{i} \times \mathfrak{j})\}|_{\delta} \geq \delta^{3\epsilon}|A \cap \mathfrak{i}| = \delta^{3\epsilon}|A|_{\Delta_{j + 1} \to \delta}, \end{displaymath} 
simply because there exists $b \in B \cap \mathfrak{j}$ such that $|\{a \in A \cap \mathfrak{i} : (a,b) \in G \cap (\mathfrak{i} \times \mathfrak{j})\}| \geq \delta^{3\epsilon}|A \cap \mathfrak{i}|$. As a consequence of this and \eqref{form19}, and recalling from \eqref{form18} that $\delta^{3\epsilon} \leq \Delta^{-\chi/2}$,
\begin{align*} |\{a + cb : (a,b) \in & \, G \cap (I \times J)\}|_{\delta}\\
& \gtrsim  |\{a + cb : (a,b) \in \cup \mathcal{H}_{IJ}\}|_{\Delta_{j + 1}} \cdot \delta^{3\epsilon}|A|_{\Delta_{j + 1} \to \delta} \geq \Delta^{-\chi/2}|A|_{\Delta_{j} \to \delta}. \end{align*}
This estimate is valid for all $I \times J \in \mathcal{G}$. Finally, we combine the information from the pieces $G \cap (I \times J)$. First, we fix a "good row": namely, recalling $|\mathcal{G}| \geq \delta^{3\epsilon}|A|_{\Delta_{j}}|B|_{\Delta_{j}}$, there exists $J \in \mathcal{D}_{\Delta_{j}}(B)$ (fixed from now on) such that $|\mathcal{G}_{J}| \geq \delta^{3\epsilon}|A|_{\Delta_{j}}$, where
\begin{displaymath} \mathcal{G}_{J} := \{I \in \mathcal{D}_{\Delta_{j}}(A) : I \times J \in \mathcal{G}\}. \end{displaymath}
Now, it is easy to check that the sets $\{a + cb : (a,b) \in (I \times J)\}$ have bounded overlap as $I$ varies in $\mathcal{D}_{\Delta_{j}}(\R)$ (and $c,J$ are fixed). Therefore, using also \eqref{form18},
\begin{displaymath} |\{a + cb : (a,b) \in G\}|_{\delta} \gtrsim \sum_{I \in \mathcal{G}_{J}} |\{a + cb : (a,b) \in G \cap (I \times J)\}|_{\delta} \gtrsim |\mathcal{G}_{J}| \cdot \Delta^{-\chi/2}|A|_{\Delta_{j} \to \delta} \geq \delta^{-\epsilon}|A|. \end{displaymath}
This completes the proof of Proposition \ref{thm:ABC}. \end{proof} 

\section{Proof of Theorem \ref{c3}}\label{s4}

In this section we prove Theorem \ref{c3}. We first extract from the main argument a reduction, which says that $C \subset [\tfrac{1}{2},1]$ without loss of generality. This would be straightforward if $C$ satisfied some "$2$-ends condition", but the set $C$ in Theorem \ref{c3} may be contained on a short interval $[0,\Delta]$ to begin with. The main idea of the proof is to observe that the condition Theorem \ref{c3}(X) is (somewhat) invariant with respect to rescaling $C$, see \eqref{form45}.

\begin{proposition}\label{prop2} It suffices to prove Theorem \ref{c3} under the additional hypothesis $C \subset [\tfrac{1}{2},1]$. \end{proposition}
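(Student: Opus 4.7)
The strategy is to reduce to $C \subset [\tfrac{1}{2}, 1]$ by isolating a dyadic annulus of $C$ and rescaling multiplicatively. A pure multiplicative rescaling is the natural tool here because translating $C$ would alter the sumset $A + cB$ in an uncontrolled way: for instance $A + (C + t)B = \bigcup_{b \in B} ((A + cB) + tb)$ is a union of translates by amounts $tb$ that depend on $b$. The central input --- hinted at by \eqref{form45} --- will be the near-invariance of hypothesis $(\Pi)$ under such a rescaling.

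I would start by partitioning $C \setminus \{0\}$ into dyadic annuli $C_k := C \cap [2^{-k-1}, 2^{-k})$ for $k \in \{0, 1, \ldots, K\}$ with $K = O(\log(1/\delta))$. A pigeonhole over $k$ yields an index $k_0$ with $|C_{k_0}| \gtrsim |C|/\log(1/\delta)$. Setting $\Delta := 2^{-k_0}$, the substitutions $c' := c/\Delta$ and $b' := \Delta b$ preserve the product $c'b' = cb$, and send $C_{k_0}$ to $C' := C_{k_0}/\Delta \subset [\tfrac{1}{2}, 1]$ and $B$ to $B' := \Delta B$. Reinterpreting the problem at the scale $\delta' := \delta \Delta$:
\begin{itemize}
\item $A$ is a Katz-Tao $(\delta', \alpha)$-set trivially, since it is $\delta$-separated;
\item $B'$ is a Katz-Tao $(\delta', \beta)$-set by the affine invariance of the Katz-Tao condition (cf.\ the remark preceding Lemma \ref{lemma1});
\item $C'$ is a Katz-Tao $(\delta', \gamma)$-set contained in $[\tfrac{1}{2}, 1]$.
\end{itemize}

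The heart of the argument is then to verify hypothesis $(\Pi)$ at the new scale $\delta'$, namely $|B'|^{\gamma} |C'|^{\beta} (\delta')^{\beta \gamma} \geq (\delta')^{-\tilde\eta}$ for some $\tilde\eta > 0$ --- this is what equation \eqref{form45} asserts. The verification combines the pigeonhole bound $|C_{k_0}| \gtrsim |C|/\log(1/\delta)$, the original $(\Pi)$, the scaling identity $(\delta')^{\beta\gamma} = \delta^{\beta\gamma} \Delta^{\beta\gamma}$, and the Katz-Tao upper bound $|C_{k_0}| \leq (\Delta/\delta)^{\gamma}$ (which controls how small $\Delta$ can be relative to $\delta$). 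With $(\Pi)$ verified, one applies the assumed form of Theorem \ref{c3} to $(A, B', C')$ at scale $\delta'$, obtains a $c' \in C'$ with the expansion property \eqref{expansion}, and translates back via $c := \Delta c' \in C_{k_0} \subset C$ and the sumset identity $a + cb = a + c'b'$, plus a standard $\delta'$-to-$\delta$ scale conversion. I expect the $(\Pi)$-invariance step \eqref{form45} to be the main obstacle: if $\Delta$ is too close to $\delta$ (equivalently, $k_0$ close to $\log_2(1/\delta)$), the naive computation of $\tilde\eta$ can collapse, and the Katz-Tao upper bound on $|C_{k_0}|$ must be leveraged precisely to exclude this regime.
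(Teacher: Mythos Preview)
Your approach has a genuine gap in the verification of $(\Pi)$ at the new scale, and the Katz-Tao bound on $|C_{k_0}|$ does not rescue it. After the substitution $b' = \Delta b$, $c' = c/\Delta$ and passing to scale $\delta' = \delta\Delta$, you need
\[
|B|^{\gamma}\,|C_{k_0}|^{\beta}\,(\delta\Delta)^{\beta\gamma}\ \geq\ (\delta\Delta)^{-\tilde\eta}.
\]
The left side carries an uncompensated factor $\Delta^{\beta\gamma}$: using $|C_{k_0}| \gtrsim |C|/\log(1/\delta)$ gives only a lower bound of order $\delta^{-\eta}\Delta^{\beta\gamma}$. The Katz-Tao bound $|C_{k_0}| \leq (\Delta/\delta)^{\gamma}$ combined with the pigeonhole forces merely $\Delta \gtrsim \delta^{1-\eta/(\beta\gamma)}$, so for small $\eta$ one may still have $\Delta^{\beta\gamma}\approx \delta^{\beta\gamma-\eta}$, and $(\Pi)$ at scale $\delta'$ fails outright. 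There is a second, related problem: even granting the special case at scale $\delta'$, the conclusion $|\{a+c'b'\}|_{\delta'}\geq (\delta')^{-\epsilon}|A|$ does not convert to a $\delta$-scale bound without losing a factor comparable to $\Delta$, since $|X|_{\delta'}\leq |X|_{\delta}\cdot(\delta/\delta')$.

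The paper avoids both issues by a different rescaling. It dilates $C$ and a \emph{localised} piece $A\cap I$ (with $I\in\mathcal{D}_{\Delta}(A)$) by $1/\Delta$, and it does \emph{not} dilate $B$; instead it passes, via Corollary \ref{cor2}, to a Katz-Tao $(\delta/\Delta,\beta)$-subset $\bar B\subset B$ and works at the coarser scale $\rho=\delta/\Delta$. Then $|\bar B|\approx \Delta^{\beta}|B|$, so
\[
|\bar B|^{\gamma}\,|\bar C|^{\beta}\,\rho^{\beta\gamma}\ \approx\ (\Delta^{\beta}|B|)^{\gamma}\,|C|^{\beta}\,(\delta/\Delta)^{\beta\gamma}\ =\ |B|^{\gamma}|C|^{\beta}\delta^{\beta\gamma},
\]
and the $\Delta$-dependence cancels exactly; this is precisely \eqref{form45}. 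The localisation of $A$ is then undone by summing over $I\in\mathcal{D}_{\Delta}(A)$, using that for $c\in[\tfrac{\Delta}{2},\Delta]$ the images $(A\cap I)+cB$ sit in nearly disjoint $\Delta$-intervals. The paper also selects $\Delta$ more carefully than a dyadic pigeonhole (conditions (a)--(b)), which is needed to keep $|C\cap(\tfrac{\Delta}{2},\Delta]|$ comparable to $|C|$ up to controlled losses.
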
 

\begin{proof} Let $\alpha,\beta,\gamma,\eta$ be the constants from Theorem \ref{c3}. Let $\epsilon_{0} := \epsilon_{0}(\alpha,\beta,\gamma,\eta/4) > 0$ be the constant provided by the special case of Theorem \ref{c3}, where $C \subset [\tfrac{1}{2},1]$. We will apply the special case in the following form: if $A',B',C' \subset \rho \Z \cap [0,1]$ are sets satisfying the hypotheses of Theorem \ref{c3} at some (sufficiently small) scale $\rho > 0$ with $C' \subset [\tfrac{1}{2},1]$, and for each $c \in C'$ we are given a set $G_{c}' \subset A' \times B'$ with $|G_{c}'| \geq \rho^{\epsilon_{0}}|A'||B'|$, then 
\begin{equation}\label{form35} \sum_{c \in C'} |\{a + cb : (a,b) \in G_{c}'\}|_{\rho} \geq \rho^{-\epsilon_{0}}|A'||C'|. \end{equation} 
In other words, we have upgraded the existence of a single point $c \in C'$ to a statement that the "average" point $c \in C'$ satisfies $|\{a + cb : (a,b) \in G_{c}'\}|_{\rho} \geq \rho^{-\epsilon_{0}}|A'|$. The justification for this apparently stronger statement is the same as in Remark \ref{rem1} (the existence of a single good point $c \in C$ is formally equivalent to half of the points in $C$ being good).

Set
\begin{equation}\label{form36} \epsilon := \tfrac{1}{12}\eta \epsilon_{0}. \end{equation}
We claim that Theorem \ref{c3} holds (without the restriction $C \subset [\tfrac{1}{2},1]$) with this "$\epsilon$". Let $A,B,C \subset \delta \Z \cap [0,1]$ be as in the statement of Theorem \ref{c3}, in particular
\begin{equation}\label{form33} |B|^{\gamma}|C|^{\beta}\delta^{\beta\gamma} \geq \delta^{-\eta}. \end{equation}
Make a counter assumption: for each $c \in C$ there exists $G_{c} \subset A \times B$ with $|G_{c}| \geq \delta^{\epsilon}|A||B|$ such that
\begin{equation}\label{form31} |\{a + cb : (a,b) \in G_{c}\}|_{\delta} \leq \delta^{-\epsilon}|A|. \end{equation}

We claim that there exists a dyadic scale $\Delta = 2^{-k} \geq \delta^{1 - \eta/2}$ (depending on $C$) such that
\begin{itemize}
\item[(a)] $|C \cap [0,\Delta]| \geq \Delta^{\eta/2}|C| \geq \delta^{\eta/2}|C|$, 
\item[(b)] $|C \cap (\tfrac{\Delta}{2},\Delta]| \geq (1 - 2^{-\eta/2})|C \cap [0,\Delta]|$.
\end{itemize}
Assume that condition (b) fails for $k = 0,\ldots,m$. An easy induction shows that
\begin{equation}\label{form30} |C \cap [0,2^{-k}]| \geq 2^{-(\eta/2)k}|C|, \qquad k \in \{0,\ldots,m + 1\}. \end{equation}
On the other hand, note that the hypothesis \eqref{form33} combined with $|B| \leq \delta^{-\beta}$ implies $|C| \geq \delta^{-\eta/\beta} \geq \delta^{-\eta}$. Therefore, since $C$ is $\delta$-separated,
\begin{displaymath} 2^{-m}/\delta \geq |C \cap [0,2^{-m}]| \geq 2^{-(\eta/2) m}|C| \geq 2^{-(\eta/2)m}\delta^{-\eta},\end{displaymath}
which can be rearranged to
\begin{displaymath} 2^{-m(1 - \eta/2)} \geq \delta^{1 - \eta} \quad \Longrightarrow \quad 2^{-m} \geq \delta^{1 - \eta/(2 - \eta)} \geq \delta^{1 - \eta/2}. \end{displaymath}
Now, let $m \geq 0$ be the smallest integer such that (b) is satisfied with $\Delta = 2^{-m}$. We just argued that $\Delta \geq \delta^{1 - \eta/2}$, and \eqref{form30} implies (a).  

Let $\Delta \geq \delta^{1 - \eta/2}$ be a scale satisfying (a)-(b). Apply Corollary \ref{cor2} at scale $r := \delta/\Delta \leq \delta^{\eta}$ and with constant $2\epsilon$ to the set $B$. This produces disjoint Katz-Tao $(\delta/\Delta,\beta)$-sets $B_{1},\ldots,B_{N} \subset B$ satisfying
\begin{equation}\label{form32} |B_{j}| \gtrsim \delta^{2\epsilon}\Delta^{\beta}|B| \quad \text{and} \quad |B \, \setminus \, (B_{1} \cup \ldots \cup B_{N})| \leq \delta^{2\epsilon}. \end{equation}
For $c \in C \cap [\tfrac{\Delta}{2},\Delta]$ fixed, note that $|G_{c}| \geq \delta^{\epsilon}|A||B|$ implies
\begin{displaymath} |G_{c} \cap (A \times (B_{1} \cup \ldots \cup B_{N}))| \geq  \tfrac{1}{2}\delta^{\epsilon}|A||B| \geq \delta^{\epsilon}|A|\sum_{j = 1}^{N}|B_{j}| \end{displaymath}
therefore
\begin{displaymath}  \sum_{j = 1}^{N}\sum_{c \in C \cap [\Delta/2,\Delta]} |G_{c} \cap (A \times B_{j})| \geq \tfrac{1}{2}\delta^{\epsilon}|A||C \cap [\tfrac{\Delta}{2},\Delta]| \sum_{j = 1}^{N} |B_{j}|. \end{displaymath}
This implies the existence of $j \in \{1,\ldots,N\}$ (fixed for the rest of the proof) such that
\begin{displaymath} \sum_{c \in C \cap [\Delta/2,\Delta]} |G_{c} \cap (A \times B_{j})| \geq \tfrac{1}{2}\delta^{\epsilon}|A||B_{j}||C \cap [\tfrac{\Delta}{2},\Delta]|. \end{displaymath}
Abbreviate $\bar{B} := B_{j}$ and $C' := C \cap [\tfrac{\Delta}{2},\Delta]$. Then, the previous implies 
\begin{equation}\label{form37} \sum_{I \in \mathcal{D}_{\Delta}(A)}\sum_{c \in C'} |G_{c} \cap ((A \cap I) \times \bar{B})| \geq \tfrac{1}{2}\delta^{\epsilon}|A||\bar{B}||C'|. \end{equation}
For each $I \in \mathcal{D}_{\Delta}(A)$, write 
\begin{equation}\label{def:CI} C_{I} := \{c \in C' : |G_{c} \cap ((A \cap I) \times \bar{B})| \geq \tfrac{1}{8}\delta^{\epsilon}|A \cap I||\bar{B}|\}. \end{equation}
Then, set $\mathcal{G} := \{I \in \mathcal{D}_{\Delta}(A) : |C_{I}| \geq \tfrac{1}{8}\delta^{\epsilon}|C'|\}$. Then \eqref{form37} implies
\begin{displaymath} \sum_{I \in \mathcal{G}}\sum_{c \in C_{I}} |G_{c} \cap ((A \cap I) \times \bar{B})| \geq \tfrac{1}{4}\delta^{\epsilon}|A||\bar{B}||C'|, \end{displaymath}
and consequently
\begin{equation}\label{form34} \sum_{I \in \mathcal{G}} |A \cap I| \geq (|\bar{B}||C'|)^{-1} \sum_{I \in \mathcal{G}} \sum_{c \in C_{I}} |G_{c} \cap ((A \cap I) \times \bar{B})| \geq \tfrac{1}{4}\delta^{\epsilon}|A|. \end{equation}

For $I \in \mathcal{G}$ fixed, define $\bar{C}_{I} := \Delta^{-1}|C_{I}|$. Then $\bar{C}_{I} \subset [\tfrac{1}{2},1]$ is a Katz-Tao $(\delta/\Delta,\gamma)$-set with 
\begin{displaymath} |\bar{C}_{I}| \geq \tfrac{1}{8}\delta^{\epsilon} |C'| \gtrsim_{\eta} \delta^{\epsilon}|C \cap [0,\Delta]| \stackrel{\mathrm{(a)}}{\geq} \delta^{\epsilon + \eta/2}|C|, \end{displaymath}
and consequently (using also $3\epsilon \leq \eta/4$ by \eqref{form36}),
\begin{align} |\bar{B}|^{\gamma}|\bar{C}_{I}|^{\beta}\left(\frac{\delta}{\Delta} \right)^{\beta\gamma} & \stackrel{\eqref{form32}}{\gtrsim_{\eta}} (\delta^{2\epsilon}\Delta^{\beta}|B|)^{\gamma}(\delta^{\epsilon + \eta/2}|C|)^{\beta}\left(\frac{\delta}{\Delta} \right)^{\beta\gamma} \notag\\
&\label{form45} \stackrel{\eqref{form33}}{\geq} \delta^{-\eta/2 + 3\epsilon} \geq \left(\frac{\delta}{\Delta}\right)^{-\eta/4}.  \end{align} 
This means that $\bar{B},\bar{C}$ satisfy the hypotheses of the case of Theorem \ref{c3}, where $\bar{C} \subset [\tfrac{1}{2},1]$.

We proceed to define $(\delta/\Delta,\alpha)$-sets $\bar{A}_{I} \subset [0,1]$ to which the special case may be applied. For $I \in \mathcal{G}$, set $\bar{A}_{I} := S_{I}(A \cap I)$, where $S_{I}$ is the rescaling map taking $I$ to $[0,1)$. For $c \in C_{I}$, define also $\bar{G}_{c,I} := \{(S_{I}(a),b) : (a,b) \in G_{c} \cap ((A \cap I) \times \bar{B})\} \subset \bar{A}_{I} \times \bar{B}$. Then the definition of $c \in C_{I}$ (recall \eqref{def:CI}) implies
\begin{displaymath} |\bar{G}_{c,I}| \geq \tfrac{1}{8}\delta^{\epsilon}|\bar{A}_{I}||\bar{B}| \stackrel{\eqref{form36}}{\geq} \delta^{\eta \epsilon_{0}}|\bar{A}_{I}||\bar{B}| \geq (\delta/\Delta)^{\epsilon_{0}}|\bar{A}_{I}||\bar{B}|. \end{displaymath}
Consequently, by \eqref{form35} applied at scale $\rho := \delta/\Delta$ to the sets $\bar{A}_{I},\bar{B},C_{I},\bar{G}_{c,I}$,
\begin{align*} \sum_{c \in C_{I}} |\{a + cb : (a,b) \in G_{c} \cap ((A \cap I) \times \bar{B})\}|_{\delta} & = \sum_{c \in C_{I}} |\{a + (c/\Delta)b : (a,b) \in \bar{G}_{c,I}\}|_{\delta/\Delta}\\
& \geq \left(\tfrac{\delta}{\Delta} \right)^{-\epsilon_{0}}|\bar{A}_{I}||C_{I}| \geq \tfrac{1}{8}\delta^{\epsilon - \eta \epsilon_{0}}|A \cap I||C'|. \end{align*}
Finally, note that for $c \in C' \subset [0,\Delta]$ fixed, the sets $I + c[0,1]$ have bounded overlap as $I \in \mathcal{D}_{\Delta}(A)$ varies. Therefore,
\begin{align*} \sum_{c \in C'} |\{a + cb : (a,b) \in G_{c}\}|_{\delta} & \gtrsim \sum_{I \in \mathcal{G}} \sum_{c \in C_{I}}  |\{a + cb : (a,b) \in G_{c} \cap ((A \cap I) \times \bar{B})\}|_{\delta}\\
& \geq \tfrac{1}{8}\delta^{\epsilon - \eta\epsilon_{0}} |C'| \sum_{I \in \mathcal{G}} |A \cap I| \stackrel{\eqref{form34}}{\geq} \tfrac{1}{32} \delta^{2\epsilon - \eta \epsilon_{0}} |A||C'|. \end{align*}
Since $\eta \epsilon_{0} > 3\epsilon$ by \eqref{form36}, the estimate above yields $c \in C' \subset C$ such that $|\{a + cb : (a,b) \in G_{c}\}|_{\delta} > \delta^{-\epsilon}|A|$. This violates our counter assumption \eqref{form31}, and completes the proof. \end{proof} 

We then complete the proof of Theorem \ref{c3} in the case $C \subset [\tfrac{1}{2},1]$.

\begin{proof}[Proof of Theorem \ref{c3}] Thanks to Proposition \ref{prop2}, we may assume $C \subset [\tfrac{1}{2},1]$. We may also assume that $C$ is $\{2^{-jT}\}_{j = 1}^{m}$-uniform for any $T \geq 1$ large enough, using Lemma \ref{l:uniformization}. Next, we may assume that $A,B$ are $\{2^{-jT}\}_{j = 1}^{m}$-uniform. This reduction is slightly more complicated, but very similar to the one recorded at the start of the proof of Proposition \ref{thm:ABC}, so we leave the details to the reader.

We start by defining the constant "$\epsilon$" for which Theorem \ref{c3} holds. Fix $\alpha,\beta,\gamma,\eta$ as in the statement of Theorem \ref{c3}, and consider
\begin{displaymath} K := \{(\alpha,\beta',\gamma') \in \R^{3} : \beta' \geq \beta \eta/2 \text{ and } \gamma' \in [\max\{\eta/2,\alpha - \beta'  + \alpha \eta/2\},1]\}. \end{displaymath}
Then $K \subset \Omega_{\mathrm{ABC}}$ is compact, so Proposition \ref{thm:ABC} yields a constant $\epsilon_{0} := \epsilon_{0}(K) > 0$. Let $\tau = \tau(\eta/2) > 0$ be the constant provided by Lemma \ref{l:combinatorial-weak} applied with constant $\eta/2$. Fix an absolute constant $\mathbf{C} \geq 1$ to be determined later (at \eqref{form39}), and fix $\zeta > 0$ so small that 
\begin{equation}\label{def:epsilon} \xi := \max\Big\{\frac{\mathbf{C}}{\log_{2}(1/(4\zeta))},5\zeta\Big\} \leq \epsilon_{0}\tau. \end{equation}
Finally, let $\epsilon > 0$ be smaller than the constant provided by the Balog-Szemer\'edi-Gowers theorem (Theorem \ref{t:BSG}) applied with constant $\zeta$. 

We now make a counter assumption: for every $c \in C$, there exists a subset $G_{c} \subset A \times B$ with $|G_{c}| \geq \delta^{\epsilon}|A||B|$ such that 
\begin{equation}\label{form25} |\{a + cb : (a,b) \in G_{c}\}|_{\delta} \leq \delta^{-\epsilon}|A|. \end{equation}

Let $f \colon [0,m] \to [0,m]$ be the branching function of $C$, and let $\{a_{j}\}_{j = 0}^{n}$ and $\{\gamma_{j}\}_{j = 1}^{n}$ be the sequences provided by Lemma \ref{l:combinatorial-weak} applied to $f$ and constant $\eta/2$. Since $C$ is a Katz-Tao $(\delta,\gamma)$-set, and the slopes $\gamma_{j}$ are increasing, Lemma \ref{l:combinatorial-weak}(ii) implies
\begin{equation}\label{form21} \gamma_{j} \leq \gamma_{n} \leq s_{f}(a_{n - 1},m) \leq \gamma, \qquad j \in \{1,\ldots,n\}. \end{equation}
Write $\Delta_{j} := 2^{-a_{j}T}$, so $\delta = \Delta_{n} < \Delta_{n - 1} < \ldots < \Delta_{0} = 1$. By Lemma \ref{l:combinatorial-weak}(iii),
\begin{equation}\label{form20} \prod_{j = 0}^{n - 1} \left(\frac{\Delta_{j + 1}}{\Delta_{j}} \right)^{-\gamma_{j + 1}} = 2^{T\sum_{j = 0}^{n - 1} (a_{j + 1} - a_{j})\gamma_{j + 1}} \geq 2^{T(f(m) - m \eta/2)} = |C|\delta^{\eta/2}. \end{equation} 
Moreover, it follows from the hypothesis $|B|^{\gamma}|C|^{\beta}\delta^{\beta\gamma} \geq \delta^{-\eta}$, and $|B| \leq \delta^{-\beta}$ (by the Katz-Tao $(\delta,\beta)$-property of $B$), that $|C| \geq \delta^{-\eta}$. Therefore \eqref{form20} implies 
\begin{equation}\label{form38} \gamma_{n} = \max \gamma_{j} \geq \eta - \eta/2 \geq \eta/2. \end{equation}
We finally recall from Lemma \ref{l:combinatorial-weak}(i) that $\Delta_{j + 1}/\Delta_{j} \leq \delta^{\tau}$.

We claim that there exists an index $j \in \{0,\ldots,n - 1\}$ such that
\begin{equation}\label{form5} \gamma_{j + 1} \geq \eta/2 \quad \text{and} \quad |B|_{\delta/\Delta_{j + 1} \to \delta/\Delta_{j}}^{\alpha/\beta}\left(\frac{\Delta_{j}}{\Delta_{j + 1}} \right)^{(\alpha/\gamma)\gamma_{j + 1}} \geq \left(\frac{\Delta_{j}}{\Delta_{j + 1}} \right)^{\alpha(1 + \eta/(2\gamma))}. \end{equation} 
To show this, let $n_{0} := \min\{0 \leq j \leq n - 1: \gamma_{j + 1} \geq \eta/2\}$. (Recall $\gamma_{n} \geq \eta/2$ by \eqref{form38}.) Now, assume \eqref{form5} fails for every index $j \in \{n_{0} - 1,\ldots,n - 1\}$. Taking products on both sides, and using $(\alpha/\gamma)\gamma_{j + 1} \leq \alpha \eta/(2\gamma)$ for $0 \leq j \leq n_{0} - 2$ (if $n_{0} = 0$, this range is empty),
\begin{align*} |B|^{\alpha/\beta}|C|^{\alpha/\gamma}\delta^{\alpha \eta/(2\gamma)} & \stackrel{\eqref{form20}}{\leq} \prod_{j = 0}^{n - 1} |B|^{\alpha/\beta}_{\delta/\Delta_{j + 1} \to \delta/\Delta_{j}}\left(\frac{\Delta_{j}}{\Delta_{j + 1}} \right)^{(\alpha/\gamma)\gamma_{j + 1}}\\
& < \prod_{j = 0}^{n_{0} - 1} |B|_{\delta/\Delta_{j + 1} \to \delta/\Delta_{j}}^{\alpha/\beta}\left(\frac{\Delta_{j}}{\Delta_{j + 1}} \right)^{\alpha \eta/(2\gamma)} \prod_{j = n_{0}}^{n - 1} \left(\frac{\Delta_{j}}{\Delta_{j + 1}} \right)^{\alpha(1 + \eta/(2\gamma))}\\
& = \delta^{-\alpha\eta/(2\gamma)}|B|_{\delta/\Delta_{n_{0}} \to \delta}^{\alpha/\beta}\left(\frac{\delta}{\Delta_{n_{0}}} \right)^{-\alpha} \leq \delta^{-\alpha(1 + \eta/(2\gamma))},  \end{align*} 
where the $(\delta,\beta)$-Katz-Tao property of $B$ was used on the last line. This inequality can be rearranged to $|B|^{\gamma}|C|^{\beta}\delta^{\beta \gamma} < \delta^{-\beta \eta} \leq \delta^{-\eta}$, so a contradiction has been reached.

Let $j \in \{0,\ldots,N - 1\}$ be an index such that \eqref{form5} holds; this index is fixed for the remainder of the proof. Now we return to our counter assumption \eqref{form25}. Fix an interval $L \in \mathcal{D}_{\Delta_{j}}(C)$ arbitrarily. Instead of \eqref{form25}, we would prefer to know that $|\{a + (c - c_{0})b : (a,b) \in G_{c}\}| \leq \delta^{-\epsilon}|A|$, $c \in C$, where $c_{0} \in L$ is fixed. This can be achieved at the cost of minor refinements, and replacing $\epsilon$ by the constant $\xi > 0$ from \eqref{def:epsilon}.

\begin{claim}\label{c1} Let $\xi > 0$ be the constant defined at \eqref{def:epsilon}. The following objects exist:
\begin{itemize}
\item A $\Delta_{j + 1}$-separated subset $\bar{C} \subset C \cap L$ with $|\bar{C}| \geq \delta^{\xi}|C|_{\Delta_{j} \to \Delta_{j + 1}}$.
\item A point $c_{0} \in C \cap L$.
\item For each $c \in \bar{C}$ a set $\bar{G}_{c} \subset A \times B$ with $|\bar{G}_{c}| \geq \delta^{\xi}|A||B|$ such that
\end{itemize}
\begin{equation}\label{form28} |\{a + (c - c_{0})b : (a,b) \in \bar{G}_{c}\}|_{\delta} \leq \delta^{-\xi}|A|. \end{equation}

  \end{claim}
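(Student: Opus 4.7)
The claim is a \emph{shifting lemma}: it replaces the coefficient $c$ in the counter-assumption \eqref{form25} by the difference $c - c_{0}$ for a fixed center $c_{0} \in C \cap L$, at the cost of weakening the exponent from $\epsilon$ to $\xi$. The key identity
\[
a + (c - c_{0})b \;=\; (a + cb) - c_{0}b
\]
reduces the desired bound on $|\pi_{c - c_{0}}(\bar{G}_{c})|_{\delta}$ to a sumset bound on $(A' + cB') - c_{0}B'$. If $|A' + cB'|_{\delta}$ and $|A' + c_{0}B'|_{\delta}$ were both small for a \emph{common} pair $A' \times B'$, the Pl\"unnecke-Ruzsa inequality (Lemma \ref{PRIneq}) would bound the iterated sumset. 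Extracting such common structure from the counter-assumption over many $c \in C \cap L$ is the main task.

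First, I would fix $L \in \mathcal{D}_{\Delta_{j}}(C)$ arbitrarily. For each $c \in C \cap L$, I apply the asymmetric Balog-Szemer\'edi-Gowers theorem (Theorem \ref{t:BSG}) with parameter $\zeta$ from \eqref{def:epsilon} to \eqref{form25}: this upgrades the $G_{c}$-formulation of the counter-assumption to sumset form, producing $A_{c} \subset A$ and $B_{c} \subset B$ with $|A_{c}||B_{c}| \geq \delta^{\zeta}|A||B|$ and $|A_{c} + cB_{c}|_{\delta} \leq \delta^{-\zeta}|A|$. A dyadic pigeonhole on the sizes $|A_{c}|$ and $|B_{c}|$ then refines $C \cap L$ to $C^{*}$ with $|C^{*}| \gtrsim (\log \delta^{-1})^{-2}|C \cap L|$ on which both sizes are constant up to a factor of $2$.

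Second, I extract $c_{0}$ via Cauchy-Schwarz averaging. The identity
\[
\sum_{c_{0}, c \in C^{*}} |A_{c_{0}} \cap A_{c}||B_{c_{0}} \cap B_{c}| \;=\; \sum_{(a,b) \in A \times B} \#\{c \in C^{*} : a \in A_{c}, b \in B_{c}\}^{2},
\]
combined with Cauchy-Schwarz applied to $\sum_{c} |A_{c}||B_{c}| \geq \delta^{\zeta}|A||B||C^{*}|$, lower bounds the right-hand side by $\delta^{2\zeta}|A||B||C^{*}|^{2}$. A dyadic pigeonhole over the outer sum then produces a single $c_{0} \in C^{*}$ and a subset $\bar{C}_{0} \subset C^{*}$ with $|\bar{C}_{0}| \geq \delta^{O(\zeta)}|C \cap L|$ such that, for every $c \in \bar{C}_{0}$, the common refinements $A^{*} := A_{c_{0}} \cap A_{c}$ and $B^{*} := B_{c_{0}} \cap B_{c}$ satisfy $|A^{*}||B^{*}| \geq \delta^{O(\zeta)}|A||B|$, hence (using $|B^{*}| \leq |B|$) also $|A^{*}| \geq \delta^{O(\zeta)}|A|$.

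Third, both sumset bounds restrict: $|A^{*} + cB^{*}|_{\delta}, |A^{*} + c_{0}B^{*}|_{\delta} \leq \delta^{-\zeta}|A| \leq \delta^{-O(\zeta)}|A^{*}|$. Applying Lemma \ref{PRIneq} to $A^{*}$ with additive sets $\{cB^{*}, -c_{0}B^{*}\}$ yields $A^{**} \subset A^{*}$ with $|A^{**}| \geq |A^{*}|/2$ and $|A^{**} + cB^{*} - c_{0}B^{*}|_{\delta} \leq \delta^{-O(\zeta)}|A|$. Since $A^{**} + (c - c_{0})B^{*} \subset A^{**} + cB^{*} - c_{0}B^{*}$, setting $\bar{G}_{c} := A^{**} \times B^{*}$ gives $|\pi_{c - c_{0}}(\bar{G}_{c})|_{\delta} \leq \delta^{-O(\zeta)}|A|$ and $|\bar{G}_{c}| \geq \delta^{O(\zeta)}|A||B|$. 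The choice $\xi \geq 5\zeta$ in \eqref{def:epsilon} (together with the $\mathbf{C}/\log_{2}(1/(4\zeta))$ term that absorbs the $(\log\delta^{-1})^{-2}$ pigeonhole loss) forces \eqref{form28}. Finally, a maximal $\Delta_{j+1}$-separated subset $\bar{C} \subset \bar{C}_{0}$, using $\{2^{-jT}\}$-uniformity of $C$ to convert $|\bar{C}_{0}| \geq \delta^{O(\zeta)}|C|_{\Delta_{j} \to \delta}$ into $|\bar{C}| \geq \delta^{\xi}|C|_{\Delta_{j} \to \Delta_{j+1}}$, completes the proof.

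The main obstacle is the Pl\"unnecke-Ruzsa step, which incurs a multiplicative factor of $|A|/|A^{*}| \leq \delta^{-O(\zeta)}$ in the sumset constant; this loss compounds with the Cauchy-Schwarz overhead and the two dyadic pigeonholes. The absolute constant $\mathbf{C}$ in \eqref{def:epsilon} is calibrated precisely to absorb the logarithmic overheads, but one must track each exponent carefully to confirm that the linear slack $5\zeta$ is enough to accommodate the cumulative constant-factor losses.
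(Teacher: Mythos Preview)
Your overall architecture (Balog--Szemer\'edi--Gowers, then Cauchy--Schwarz to extract a common $c_{0}$, then Pl\"unnecke--Ruzsa) matches the paper, but there is a genuine gap at the third step. You write ``Applying Lemma~\ref{PRIneq} to $A^{*}$ with additive sets $\{cB^{*}, -c_{0}B^{*}\}$'', but the hypothesis of Lemma~\ref{PRIneq} for the summand $-c_{0}B^{*}$ is that $|A^{*} + (-c_{0}B^{*})|_{\delta} = |A^{*} - c_{0}B^{*}|_{\delta}$ is small. What you actually have is $|A^{*} + c_{0}B^{*}|_{\delta} \le \delta^{-O(\zeta)}|A^{*}|$, and a bound on $|A+B|$ does \emph{not} give a bound on $|A-B|$ with losses polynomial in the doubling constant alone. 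This sign flip is exactly what Lemma~\ref{TVLemma} is for: from $|A^{*} + c_{0}B^{*}| \le K|A^{*}|$ one passes to $A' \subset A^{*}$ with $|A' - c_{0}B^{*}| \lesssim_{N} K^{2^{N}/N}|A^{*}|^{1+1/N}$, and only \emph{then} applies Lemma~\ref{PRIneq} with $cB^{*}$ and $-c_{0}B^{*}$.

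This also corrects your reading of the constant $\xi$. The term $\mathbf{C}/\log_{2}(1/(4\zeta))$ in \eqref{def:epsilon} is \emph{not} there to swallow $(\log\delta^{-1})^{-2}$ pigeonhole losses (those are $\delta^{o(1)}$ and absorbed for free by taking $\delta$ small); it is there to absorb the factor $|A^{*}|^{1/N} \le \delta^{-1/N}$ coming from Lemma~\ref{TVLemma}, with the choice $N \sim \tfrac{1}{2}\log_{2}(1/(4\zeta))$ balancing $K^{2^{N}/N}$ against $\delta^{-1/N}$. Your extra dyadic pigeonhole on $|A_{c}|,|B_{c}|$ is therefore unnecessary, and without the Tao--Vu lemma the argument as written does not close.
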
 

\begin{proof} For each $c \in C \cap L \subset [\tfrac{1}{2},1]$, apply the Balog-Szemer\'edi-Gowers theorem (Theorem \ref{t:BSG}) to extract subsets $A_{c} \subset A$ and $B_{c} \subset B$ with the properties
\begin{equation}\label{form27} |A_{c}||B_{c}| \geq \delta^{\zeta}|A||B| \quad \text{and} \quad |A_{c} + cB_{c}|_{\delta} \leq \delta^{-\zeta}|A|. \end{equation}
By Cauchy-Schwarz,
\begin{displaymath} \sum_{c,c' \in C \cap L} |(A_{c} \times B_{c}) \cap (A_{c'} \times B_{c'})| \geq \delta^{2\zeta}|C \cap L|^{2}|A||B|. \end{displaymath}
Consequently, there exists $c_{0} \in C \cap L$, and $C_{0} \subset C \cap L$ with $|C_{0}| \geq \delta^{3\zeta}|C \cap L|$ such that
\begin{equation}\label{form26} |(A_{c_{0}} \times B_{c_{0}}) \cap (A_{c} \times B_{c})| \geq \delta^{3\zeta}|A||B|, \qquad c \in C_{0}. \end{equation}
Write $\bar{B}_{c} := B_{c_{0}} \cap B_{c}$, so $|\bar{B}_{c}| \geq \delta^{3\zeta}|B|$ for $c \in C_{0}$. By \eqref{form27} and \eqref{form26},
\begin{displaymath} |(A_{c_{0}} \cap A_{c}) + c_{0}\bar{B}_{c}|_{\delta} \leq \delta^{-4\zeta}|A_{c_{0}} \cap A_{c}|, \qquad c \in C_{0}. \end{displaymath}
Therefore, by Lemma \ref{TVLemma}, for any $N \in \N$ fixed, there exists $A_{c}' \subset A_{c_{0}} \cap A_{c}$ with $|A_{c}'| \sim |A_{c_{0}} \cap A_{c}|$ such that
\begin{displaymath} |A_{c}' - c_{0}\bar{B}_{c}|_{\delta} \lesssim_{N} (\delta^{-4\zeta})^{2^{N}}|A_{c_{0}} \cap A_{c}|^{1 + 1/N}. \end{displaymath}
Applying this with $N := \tfrac{1}{2} \log_{2}(1/(4\zeta))$ (or the integer part thereof), and recalling that
\begin{displaymath} \xi = \max\{\mathbf{C}/\log_{2}(1/(4\zeta)),5\zeta\} \end{displaymath}
(for a suitable absolute constant $\mathbf{C} \geq 1$), we find
\begin{equation}\label{form39} |A_{c}' - c_{0}\bar{B}_{c} |_{\delta} \lesssim_{\zeta} \delta^{-2\sqrt{\zeta}}|A_{c_{0}} \cap A_{c}|^{1 + 2/\log_{2}(1/(4\zeta))} \leq \delta^{-\xi/3}|A_{c}'|, \end{equation}
using the crude bounds $|A_{c_{0}} \cap A_{c}| \leq \delta^{-1}$, and $2\sqrt{\zeta} \lesssim 1/\log_{2}(1/(4\zeta))$.

Since $A_{c}' \subset A_{c}$, and $\delta > 0$ is small enough, we then have both
\begin{displaymath} |A_{c}' + c\bar{B}_{c} |_{\delta} \stackrel{\eqref{form27}}{\leq} \delta^{-\xi/2}|A_{c}'| \quad \text{and} \quad |A_{c}' - c_{0}\bar{B}_{c} |_{\delta} \stackrel{\eqref{form39}}{\leq} \delta^{-\xi/2}|A_{c}'|, \end{displaymath}
Therefore, by the generalised Pl\"unnecke-Ruzsa inequality, Lemma \ref{PRIneq}, there exists yet another subset $\bar{A}_{c} \subset A_{c}'$ of size $|\bar{A}_{c}| \sim |A_{c}'|$ such that, writing $\bar{G}_{c} := \bar{A}_{c} \times \bar{B}_{c}$, 
\begin{displaymath} |\{a + (c - c_{0})b : (a,b) \in \bar{G}_{c}\}|_{\delta} = |\bar{A}_{c} + (c - c_{0})\bar{B}_{c}|_{\delta} \lesssim \delta^{-\xi}|A|, \quad c \in C_{0} \subset C \cap L. \end{displaymath}
Note that $|\bar{G}_{c}| \geq \delta^{\xi}|A||B|$ by \eqref{form26}. To complete the proof of the claim, let $\bar{C} \subset C_{0}$ be a maximal $\Delta_{j + 1}$-separated subset. Thanks to the uniformity of $C$, and since $|C_{0}| \geq \delta^{3\zeta}|C|$, it holds $|\bar{C}| \geq \delta^{\xi}|C|_{\Delta_{j} \to \Delta_{j + 1}}$. \end{proof}

To make use Claim \ref{c1}, fix
\begin{displaymath} I = [x_{I},x_{I} + \delta \Delta_{j}/\Delta_{j + 1}) \in \mathcal{D}_{\delta \Delta_{j}/\Delta_{j + 1}}(A) \quad \text{and} \quad J = [y_{J},y_{J} + \delta/\Delta_{j + 1}) \in \mathcal{D}_{\delta/\Delta_{j + 1}}(B). \end{displaymath}
Write
\begin{displaymath} \begin{cases} A_{I} := S_{I}(\mathcal{D}_{\delta}(A \cap I)) \subset \mathcal{D}_{\Delta_{j + 1}/\Delta_{j}}([0,1)), \\ B_{J} := S_{J}(\mathcal{D}_{\delta/\Delta_{j}}(B \cap J)) \subset \mathcal{D}_{\Delta_{j + 1}/\Delta_{j}}([0,1)), \\ C' := \Delta_{j}^{-1}(\bar{C}_{0} - c_{0}), \end{cases} \end{displaymath}
where 
\begin{equation}\label{form11} S_{I}(x) = (x - x_{I}) \cdot \Delta_{j + 1}/(\delta \Delta_{j}) \quad \text{and} \quad S_{J} = (y - y_{J}) \cdot \Delta_{j + 1}/\delta \end{equation}
are the rescaling maps sending $I,J$ to $[0,1)$.

Then the following properties hold for $\Delta := \Delta_{j + 1}/\Delta_{j}$:
\begin{itemize}
\item $A_{I}$ is a Katz-Tao $(\Delta,\alpha)$-set with $|A_{I}| = |A|_{\delta\Delta_{j}/\Delta_{j + 1} \to \delta}$.
\item $|B_{J}| = |B|_{\delta/\Delta_{j + 1} \to \delta/\Delta_{j}} =: \Delta^{-\beta'}$ with $\beta' \geq \beta\eta/2$. The lower bound for $\beta'$ follows from \eqref{form5} and $\gamma_{j + 1} \leq \gamma$, as recorded in \eqref{form21}.
\item $C' \subset [-1,1]$ is a $\Delta$-separated $(\Delta,\gamma',O_{T}(\delta^{-\xi}))$-Frostman set with 
\begin{displaymath} \gamma' := \gamma_{j + 1} \stackrel{\eqref{form5}}{\geq} \eta/2. \end{displaymath}
This is true, since $\bar{C}_{0} \subset C \cap L$ satisfies $|\bar{C}_{0}| \geq \delta^{\xi}|C|_{\Delta_{j} \to \Delta_{j + 1}}$, and the $\Delta_{j}^{-1}$-renormalisation of $C \cap L$ is $(\Delta,\gamma',O_{T}(1))$-Frostman by Lemma \ref{lemma2}. Note also that $\delta^{-\xi} \leq \Delta^{-\epsilon_{0}}$ by the choice of $\xi$ at \eqref{def:epsilon}, and the choice of $\epsilon_{0}$ at \eqref{def:epsilon}.

\item As a consequence of \eqref{form5}, and $\alpha \leq \min\{\beta,\gamma\}$,
\begin{equation}\label{form42} \Delta^{-\beta' - \gamma'} \geq |B|^{\alpha/\beta}_{\delta/\Delta_{j + 1} \to \delta/\Delta_{j}}\Delta^{-(\alpha/\gamma)\gamma_{j + 1}} \geq \Delta^{-\alpha(1 + \eta/(2\gamma))} \quad \Longrightarrow \quad \beta' + \gamma' \geq \alpha + \tfrac{\alpha \eta}{2}. \end{equation}
\end{itemize}
Therefore $(\alpha,\beta',\gamma') \in K$, and Proposition \ref{thm:ABC} (via the choice of "$\epsilon_{0}$") implies that there exists a subset $C_{IJ} \subset \bar{C}$ with $|C_{IJ}| \geq (1 - \Delta^{\epsilon_{0}})|\bar{C}|$ such that the following holds for all $c \in C_{IJ}$: if $\mathbf{G} \subset \cup (A_{I} \times B_{J})$ is a $\Delta$-separated set with $|\mathbf{G}| \geq \Delta^{\epsilon_{0}}|A_{I}||B_{J}|$, then
\begin{equation}\label{form6} |\{a + ((c - c_{0})/\Delta_{j})b : (a,b) \in \mathbf{G}\}|_{\Delta} \geq \Delta^{-\epsilon_{0}}|A_{I}| = \Delta^{-\epsilon_{0}}|A|_{\delta \Delta_{j}/\Delta_{j + 1} \to \delta}. \end{equation} 
From this point on, the argument is quite similar to that in Proposition \ref{thm:ABC}. Abbreviate
\begin{displaymath} \mathcal{D}(A \times B) := \mathcal{D}_{\delta \Delta_{j}/\Delta_{j + 1}}(A) \times \mathcal{D}_{\delta/\Delta_{j + 1}}(B). \end{displaymath}
By double counting, there exists a point $c \in \bar{C}$ such that
\begin{equation}\label{form9} |\{I \times J \in  \mathcal{D}(A \times B) : c \in C_{IJ}\}| \geq (1 - \Delta^{\epsilon_{0}/2})|A|_{\delta \Delta_{j}/\Delta_{j + 1}}|B|_{\delta/\Delta_{j + 1}}. \end{equation}
Fix this $c \in \bar{C} \subset C$ for the remainder of the proof, and abbreviate $G := \bar{G}_{c}$ (the set in Claim \ref{c1}). By $|G| \geq \delta^{\xi}|A||B|$ and the uniformity of $A \times B$,
\begin{displaymath} |\{I \times J \in  \mathcal{D}(A \times B) : |G \cap (I \times J)| \geq \delta^{2\xi}|(A \cap I) \times (B \cap J)|\}| \geq \delta^{2\xi}|A|_{\delta \Delta_{j}/\Delta_{j + 1}}|B|_{\delta/\Delta_{j + 1}}. \end{displaymath}
Combining this with \eqref{form9}, and noting $\delta^{2\xi} \geq 2\Delta^{\epsilon_{0}}$ by \eqref{def:epsilon},
\begin{displaymath} |\{I \times J : c \in C_{IJ} \text{ and } |G \cap (I \times J)| \geq \delta^{2\xi}|(A \cap I) \times (B \cap J)|\}| \geq \delta^{3\xi}|A|_{\delta \Delta_{j}/\Delta_{j + 1}}|B|_{\delta/\Delta_{j + 1}}. \end{displaymath}
The rectangles $I \times J \in \mathcal{D}(A \times B)$, as above, are called \emph{good} and denoted $\mathcal{G}$. Fix $I \times J \in \mathcal{G}$. To apply \eqref{form6}, we need to produce a set $\mathbf{G}_{IJ} \subset \cup (A_{I} \times B_{J})$. We do so by setting first 
\begin{equation}\label{form22} \mathcal{H}_{IJ} := \{\mathfrak{i} \times \mathfrak{j} \in \mathcal{D}_{\delta}(A \cap I) \times \mathcal{D}_{\delta/\Delta_{j}}(B \cap J) : G \cap (\mathfrak{i} \times \mathfrak{j}) \neq \emptyset\}. \end{equation}
Define also $G_{IJ} \subset G \cap (I \times J)$ by selecting a single point of $G$ from each $\mathfrak{i} \times \mathfrak{j} \in \mathcal{H}_{IJ}$. Then, put $\mathbf{G}_{IJ} := \{(S_{I}(a),S_{J}(b)) : (a,b) \in H_{IJ}\} \subset \cup (A_{I} \times B_{I})$. With this notation,
\begin{displaymath} \delta^{2\epsilon}|A|_{\delta \Delta_{j}/\Delta_{j + 1} \to \delta}|B|_{\delta/\Delta_{j + 1} \to \delta} = \delta^{2\epsilon}|(A \cap I) \times (B \cap J)| \leq |G \cap (I \times J)| \leq |G_{IJ}||B|_{\delta/\Delta_{j} \to \delta}, \end{displaymath}
which implies 
\begin{displaymath} |\mathbf{G}_{IJ}| = |G_{IJ}| \geq \delta^{2\epsilon}|A|_{\delta \Delta_{j}/\Delta_{j + 1} \to \delta}|B|_{\delta/\Delta_{j + 1} \to \delta/\Delta_{j}} = \delta^{2\epsilon}|A_{I}||B_{J}| \stackrel{\eqref{def:epsilon}}{\geq} \Delta^{\epsilon_{0}}|A_{I}||B_{J}|. \end{displaymath}
The set $\mathbf{G}_{IJ}$ is $\Delta$-separated, because $S_{I} \times S_{J}$ sends the rectangles $\mathfrak{i} \times \mathfrak{j}$ to $\Delta$-squares. We may now deduce from \eqref{form6} that
\begin{equation}\label{form10} |\{a + ((c - c_{0})/\Delta_{j})b : (a,b) \in \mathbf{G}_{IJ}\}|_{\Delta} \geq \Delta^{-\epsilon_{0}}|A|_{\delta\Delta_{j}/\Delta_{j + 1} \to \delta}, \qquad I \times J \in \mathcal{G}.\end{equation}
Finally, we need to relate the size of the sets $\{a + ((c - c_{0})/\Delta_{j})^{-1}b : (a,b) \in \mathbf{G}_{IJ}\}$ to the size of $\{a + (c - c_{0})b : (a,b) \in G\}$. To this end, we first record the relation (recall \eqref{form11})
\begin{displaymath} S_{I}(a) + (\Delta_{j}^{-1}(c - c_{0}))S_{J}(b) = (a + (c - c_{0})b) \cdot \frac{\Delta}{\delta} - (x_{I} + (c - c_{0})y_{J}) \cdot \frac{\Delta}{\delta} \end{displaymath} 
for $a \in I$ and $b \in J$. This implies
\begin{displaymath} \{a + ((c - c_{0})/\Delta_{j})b : (a,b) \in \mathbf{G}_{IJ}\} = \frac{\Delta}{\delta} \cdot \{a + (c - c_{0})b : (a,b) \in H_{IJ}\} - w_{IJ}, \end{displaymath}
with $w_{IJ}$ equal to the constant $w_{IJ} = (x_{I} + (c - c_{0})y_{J}) \cdot \frac{\Delta}{\delta}$, and in particular
\begin{align} |\{a + (c - c_{0})b : (a,b) \in G \cap (I \times J)\}|_{\delta} & \geq |\{a + (c - c_{0})b : (a,b) \in G_{IJ}\}|_{\delta} \notag\\
&\label{form12} = |\{a + ((c - c_{0})/\Delta_{j})b : (a,b) \in \mathbf{G}_{IJ}\}|_{\Delta}. \end{align} 
We also observe that for $c \in L$ and $J \in \mathcal{D}_{\delta/\Delta_{j + 1}}(B)$ fixed, the intervals $I + (c - c_{0})J$ (which contain $\{a + (c - c_{0})b : (a,b) \in G \cap (I \times J)\}$) have bounded overlap as $I \in \mathcal{D}_{\delta \Delta_{j}/\Delta_{j + 1}}(\R)$ varies; indeed 
\begin{displaymath} I + (c - c_{0})J \subset B(x_{I} + (c - c_{0})y_{J},2\delta \Delta_{j}/\Delta_{j + 1}), \end{displaymath} 
and the (left end-)points $x_{I}$ are $(\delta \Delta_{j}/\Delta_{j + 1})$-separated. This is where we needed that $|c - c_{0}| \leq \Delta_{j}$, explaining the purpose proving Claim \ref{c1}.

Finally, recall from above \eqref{form22} that $|\mathcal{G}| \geq \delta^{3\xi}|A|_{\delta\Delta_{j}/\Delta_{j + 1}}|B|_{\delta/\Delta_{j + 1}}$. Consequently, we may fix $J \in \mathcal{D}_{\delta/\Delta_{j + 1}}(B)$ such that 
\begin{equation}\label{form23} |\mathcal{G}_{J}| \geq \delta^{3\xi}|A|_{\delta\Delta_{j}/\Delta_{j + 1}}, \end{equation}
where $\mathcal{G}_{J} := \{I \in \mathcal{D}_{\delta \Delta_{j}/\Delta_{j + 1}}(A) : I \times J \in \mathcal{G}\}$.

Using the bounded overlap of the intervals $I + (c - c_{0})J$ for this "$J$", we finally find
\begin{align*} |\{a + (c - c_{0})b : (a,b) \in G\}|_{\delta} & \gtrsim \sum_{I \in \mathcal{G}_{J}} |\{a + (c - c_{0})b : (a,b) \in G \cap (I \times J)\}|_{\delta}\\
& \stackrel{\eqref{form12}}{\geq} \sum_{I \in \mathcal{G}_{J}} |\{a + (c - c_{0})b : (a,b) \in \mathbf{G}_{IJ}\}|_{\Delta}\\
& \stackrel{\eqref{form10}}{\geq} \Delta^{-\epsilon_{0}}|\mathcal{G}_{J}| |A \cap I|_{\delta \Delta_{j}/\Delta_{j + 1} \to \delta}\\
& \geq \Delta^{-\epsilon_{0}}\delta^{3\xi} |A| \stackrel{\eqref{def:epsilon}}{\geq} \delta^{-2\xi}|A|. \end{align*} 
Recalling that $G = \bar{G}_{c}$, the estimate above contradicts \eqref{form28} and completes the proof. \end{proof} 

\section{Proof of Theorem \ref{c4}}\label{s5}

 In this section we indicate the changes -- or really simplifications -- to the previous argument needed to prove Theorem \ref{c4}, where the set $C$ is $(\delta,\eta,\delta^{-\epsilon})$-Frostman. The main point is that we do not have to expend effort in (the counterpart of) \eqref{form5} to establish $\gamma_{j + 1} > 0$. This is automatic by the Frostman property of $C$. Since ensuring $\gamma_{j + 1} > 0$ was the only place in the proof of Theorem \ref{c3} where the Katz-Tao $(\delta,\beta)$-set property of $B$ was used, the hypothesis can be omitted from Theorem \ref{c4}.

\begin{proof}[Proof of Theorem \ref{c4}] We start by defining "$\epsilon$" for which Theorem \ref{c4} holds. Let
\begin{displaymath} K := \{(\alpha',\beta',\gamma') \in \R^{3} : \alpha' = \alpha, \, \beta' \geq \alpha \eta/2 \text{ and } \gamma' \in [\max\{\eta/2,\alpha' - \beta'  + \alpha \eta/2\},1]\}. \end{displaymath}
Then $K \subset \Omega_{\mathrm{ABC}}$ is compact, so Theorem \ref{thm:ABC} yields a constant $\epsilon_{0} := \epsilon_{0}(K) > 0$. Fix a sufficiently large absolute constant $\mathbf{C} \geq 1$, and fix $\zeta > 0$ so small that 
\begin{equation}\label{def:epsilonA} \xi := \max\Big\{\frac{\mathbf{C}}{\log_{2}(1/(4\zeta))},5\zeta\Big\} \leq \epsilon_{0}\tau, \end{equation}
where $\tau = \tau(\eta/2) > 0$ is the constant given by Lemma \ref{l:combinatorial-weak} applied with $\eta/2$. Then, let 
\begin{displaymath} \epsilon := \tfrac{1}{2} \min\{\epsilon_{\mathrm{BSG}}(\zeta),\tau/4\} \cdot \eta > 0, \end{displaymath}
where $\epsilon_{\mathrm{BSG}}(\eta) > 0$ is given by the Balog-Szemer\'edi-Gowers theorem (Theorem \ref{t:BSG}) applied with constant $\zeta$.

We make a counter assumption: for every $c \in C$, there exists a subset $G_{c} \subset A \times B$ with $|G_{c}| \geq \delta^{\epsilon}|A||B|$ such that 
\begin{equation}\label{form25a} |\{a + cb : (a,b) \in G_{c}\}|_{\delta} \leq \delta^{-\epsilon}|A|. \end{equation}
As before, we may also assume that $A,B,C$ are $\{2^{-jT}\}_{j = 1}^{m}$-uniform for some $T \geq 1$. Since $C$ is $(\delta,\eta,\delta^{-\epsilon})$-Frostman, it holds $|C \cap [0,\delta^{2\epsilon/\eta}]| \leq \delta^{\epsilon}|C|$. This allows us to assume with no loss of generality that 
\begin{equation}\label{form43} C \subset [\delta^{2\epsilon/\eta},1] \subset [\delta^{\epsilon_{\mathrm{BSG}}(\zeta)},1]. \end{equation}
As before, we may assume that $A,B,C$ are all $\{2^{-jT}\}_{j = 1}^{m}$-uniform for some $T \geq 1$. Let $f \colon [0,m] \to [0,m]$ be the branching function of $C$, and let $\{a_{j}\}_{j = 0}^{n}$ and $\{\gamma_{j}\}_{j = 1}^{n}$ be the sequences provided by Lemma \ref{l:combinatorial-weak} applied to $f$ and constant $\eta/2$. Write $\Delta_{j} := 2^{-a_{j}T}$. 
Since $C$ is a Katz-Tao $(\delta,\gamma)$-set, and the slopes $\gamma_{j}$ are increasing, Lemma \ref{l:combinatorial-weak}(ii) implies
\begin{equation}\label{form21a} \eta/2 \leq \gamma_{1} \leq \gamma_{j} \leq \gamma_{n} \leq s_{f}(a_{n - 1},m) \leq \gamma, \qquad j \in \{1,\ldots,n\}. \end{equation}
The lower bound $\gamma_{1} \geq \eta/2$ follows from the $(\delta,\eta,\delta^{-\epsilon})$-Frostman hypothesis, which (together with the uniformity of $C$) implies that $C$ is $(\Delta_{1},\eta,\delta^{-\epsilon})$-Frostman, and therefore $(\Delta_{1},\eta,\Delta_{1}^{-\eta/2})$-Frostman by \eqref{form21a} (since $\Delta_{1} \leq \delta^{\tau}$ by Lemma \ref{l:combinatorial-weak}(i)).

By Lemma \ref{l:combinatorial-weak}(iii),
\begin{equation}\label{form20a} \prod_{j = 0}^{n - 1} \left(\frac{\Delta_{j + 1}}{\Delta_{j}} \right)^{-\gamma_{j + 1}} = 2^{T\sum_{j = 0}^{n - 1} (a_{j + 1} - a_{j})\gamma_{j + 1}} \geq 2^{T(f(m) - m \eta/2)} = |C|\delta^{\eta/2}. \end{equation} 

We claim that there exists an index $j \in \{0,\ldots,N - 1\}$ such that
\begin{equation}\label{form5a} \quad |B|_{\delta/\Delta_{j + 1} \to \delta/\Delta_{j}}\left(\frac{\Delta_{j}}{\Delta_{j + 1}} \right)^{(\alpha/\gamma)\gamma_{j + 1}} \geq \left(\frac{\Delta_{j}}{\Delta_{j + 1}} \right)^{\alpha(1 + \eta/(2\gamma))}. \end{equation} 
Indeed, if this fails for every $j \in \{0,\ldots,N - 1\}$, then 
\begin{align*} |B||C|^{\alpha/\gamma}\delta^{\alpha \eta/(2\gamma)} & \stackrel{\eqref{form20a}}{\leq} \prod_{j = 0}^{n - 1} |B|_{\delta/\Delta_{j + 1} \to \delta/\Delta_{j}}\left(\frac{\Delta_{j}}{\Delta_{j + 1}} \right)^{(\alpha/\gamma)\gamma_{j + 1}}\\
& < \prod_{j = 0}^{n - 1} \left(\frac{\Delta_{j}}{\Delta_{j + 1}} \right)^{\alpha(1 + \eta/(2\gamma))} = \delta^{-\alpha - \alpha \eta/(2\gamma)}. \end{align*}
This can be rearranged to $|B|^{\gamma}|C|^{\alpha}\delta^{\alpha \gamma} < \delta^{-\alpha \eta} \leq \delta^{-\eta}$, so a contradiction ensues.

Let $j \in \{0,\ldots,N - 1\}$ be an index such that \eqref{form5a} holds; this index is fixed for the remainder of the proof. Now we return to our counter assumption \eqref{form25a}. Fix an interval $L \in \mathcal{D}_{\Delta_{j}}(C)$ arbitrarily. The proof of the next claim is the same as the proof of Claim \ref{c1}. Here we need that $C \cap L \subset [\delta^{\epsilon_{\mathrm{BSG}}(\zeta)},1]$, so the application of the Balog-Szemer\'edi-Gowers lemma is legitimate.
\begin{claim} Let $\xi > 0$ be the constant defined at \eqref{def:epsilonA}. The following objects exist:
\begin{itemize}
\item A $\Delta_{j + 1}$-separated subset $\bar{C} \subset C \cap L$ with $|\bar{C}| \geq \delta^{\xi}|C|_{\Delta_{j} \to \Delta_{j + 1}}$.
\item A point $c_{0} \in C \cap L$.
\item For each $c \in \bar{C}$ a set $\bar{G}_{c} \subset A \times B$ with $|\bar{G}_{c}| \geq \delta^{\xi}|A||B|$ such that
\end{itemize}
\begin{equation}\label{form28a} |\{a + (c - c_{0})b : (a,b) \in \bar{G}_{c}\}|_{\delta} \leq \delta^{-\xi}|A|. \end{equation}
\end{claim}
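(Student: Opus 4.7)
The plan is to replay the proof of Claim \ref{c1} essentially verbatim, the only substantive change being the justification that the Balog--Szemer\'edi--Gowers theorem (Theorem \ref{t:BSG}) remains applicable. The four-step skeleton will be: (i) for each $c \in C \cap L$ apply Theorem \ref{t:BSG} (with parameter $\zeta$) to the data $(A, B, G_c)$ furnished by the counter assumption \eqref{form25a}, producing $A_c \subset A$ and $B_c \subset B$ with $|A_c||B_c| \geq \delta^{\zeta}|A||B|$ and $|A_c + cB_c|_{\delta} \leq \delta^{-\zeta}|A|$; (ii) a Cauchy--Schwarz / popularity step yields $c_0 \in C \cap L$ and $C_0 \subset C \cap L$ with $|C_0| \geq \delta^{3\zeta}|C \cap L|$ on which $|(A_{c_0} \times B_{c_0}) \cap (A_c \times B_c)| \geq \delta^{3\zeta}|A||B|$; (iii) setting $\bar{B}_c := B_{c_0} \cap B_c$, use Lemma \ref{TVLemma} with $N \sim \log_{2}(1/(4\zeta))$ to convert the sum bound $|(A_{c_0} \cap A_c) + c_0 \bar{B}_c|_{\delta} \leq \delta^{-4\zeta}|A_{c_0} \cap A_c|$ into a $c_0$-\emph{difference} bound on a refinement $A_c' \subset A_{c_0} \cap A_c$, then apply Pl\"unnecke--Ruzsa (Lemma \ref{PRIneq}) to combine this with the $c$-sum bound and obtain $\bar{A}_c \subset A_c'$ satisfying $|\bar{A}_c + (c - c_0)\bar{B}_c|_{\delta} \leq \delta^{-\xi}|A|$; (iv) take $\bar{G}_c := \bar{A}_c \times \bar{B}_c$, and let $\bar{C} \subset C_0$ be a maximal $\Delta_{j+1}$-separated subset, whose cardinality is controlled by the uniformity of $C$.

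The sole new issue, and the one that the author flags explicitly, is step (i): Theorem \ref{t:BSG} requires the scalar $c$ to lie in $[\delta^{\epsilon_{\mathrm{BSG}}(\zeta)}, 1]$. In Claim \ref{c1} this was immediate from the hypothesis $C \subset [\tfrac{1}{2}, 1]$, which in the setting of Theorem \ref{c3} had been arranged in Proposition \ref{prop2}. Here we have no two-ends reduction, but the reduction \eqref{form43} --- which exploits the $(\delta, \eta, \delta^{-\epsilon})$-Frostman property of $C$ to discard $C \cap [0, \delta^{2\epsilon/\eta}]$ --- places $C$ inside $[\delta^{2\epsilon/\eta}, 1]$. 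The constant $\epsilon$ was chosen at the start of the proof so that $2\epsilon/\eta \leq \epsilon_{\mathrm{BSG}}(\zeta)$, hence every $c \in C \cap L$ is admissible for BSG with parameter $\zeta$.

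Beyond this single check, the remainder of the argument is a purely additive-combinatorial manipulation that never consults the location of $C$ inside $[0,1]$, so I expect no real obstacle. The choice of $\xi$ at \eqref{def:epsilonA} is identical in form to that at \eqref{def:epsilon} and absorbs the Tao--Vu loss of $(\delta^{-4\zeta})^{2^N/N}$ with the same choice of $N$ as in Claim \ref{c1}; the uniformity of $C$ converts the cardinality estimate $|C_0| \geq \delta^{3\zeta}|C \cap L|$ into $|\bar{C}| \geq \delta^{\xi}|C|_{\Delta_j \to \Delta_{j+1}}$ upon passing to a maximal $\Delta_{j+1}$-separated subset. If any difficulty arises it is purely bookkeeping --- verifying that none of the accumulated $\delta$-powers exceeds the budget $\xi$ defined at \eqref{def:epsilonA}.
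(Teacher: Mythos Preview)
Your proposal is correct and matches the paper's own treatment exactly: the paper simply states that the proof is the same as that of Claim~\ref{c1}, and that the only point to check is $C \cap L \subset [\delta^{\epsilon_{\mathrm{BSG}}(\zeta)},1]$ so that the Balog--Szemer\'edi--Gowers theorem is applicable. You have correctly identified this as the sole substantive issue and correctly traced it to the reduction \eqref{form43} together with the choice of $\epsilon$ ensuring $2\epsilon/\eta \leq \epsilon_{\mathrm{BSG}}(\zeta)$.
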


To make use of \eqref{form28a}, fix
\begin{displaymath} I = [x_{I},x_{I} + \delta \Delta_{j}/\Delta_{j + 1}) \in \mathcal{D}_{\delta \Delta_{j}/\Delta_{j + 1}}(A) \quad \text{and} \quad J = [y_{J},y_{J} + \delta/\Delta_{j + 1}) \in \mathcal{D}_{\delta/\Delta_{j + 1}}(B). \end{displaymath}
Write
\begin{displaymath} \begin{cases} A_{I} := S_{I}(\mathcal{D}_{\delta}(A \cap I)) \subset \mathcal{D}_{\Delta_{j + 1}/\Delta_{j}}([0,1)), \\ B_{J} := S_{J}(\mathcal{D}_{\delta/\Delta_{j}}(B \cap J)) \subset \mathcal{D}_{\Delta_{j + 1}/\Delta_{j}}([0,1)), \\ C' := \Delta_{j}^{-1}(\bar{C} - c_{0}), \end{cases} \end{displaymath}
where $S_{I}$ and $S_{J}$ are the rescaling maps as in \eqref{form11}. Then the following properties hold for $\Delta := \Delta_{j + 1}/\Delta_{j}$:
\begin{itemize}
\item $A_{I}$ is a Katz-Tao $(\Delta,\alpha)$-set with $|A_{I}| = |A|_{\delta\Delta_{j}/\Delta_{j + 1} \to \delta}$.
\item $|B_{J}| = |B|_{\delta/\Delta_{j + 1} \to \delta/\Delta_{j}} =: \Delta^{-\beta'}$ with $\beta' \geq \alpha\eta/2$. The lower bound for $\beta'$ follows from \eqref{form5a} and $\gamma_{j + 1} \leq \gamma$, as recorded in \eqref{form21a}.
\item $C' \subset [-1,1]$ is a $\Delta$-separated $(\Delta,\gamma',O_{T}(\delta^{-\xi}))$-Frostman set with
\begin{displaymath} \gamma' := \gamma_{j + 1} \stackrel{\eqref{form21a}}{\geq} \eta/2. \end{displaymath}
This is true, since $\bar{C} \subset C \cap L$ satisfies $|\bar{C}| \geq \delta^{\xi}|C|_{\Delta_{j} \to \Delta_{j + 1}}$, and the $\Delta_{j}^{-1}$-renormalisation of $C \cap L$ is $(\Delta,\gamma',O_{T}(1))$-Frostman by Lemma \ref{lemma2}. Note also that $\delta^{-\xi} \leq \Delta^{-\epsilon_{0}}$ by the choice of $\xi$ at \eqref{def:epsilonA}, and the choice of $\epsilon_{0}$ at \eqref{def:epsilonA}.

\item As a consequence of \eqref{form5a}, and $\alpha \leq \gamma$,
\begin{displaymath} \Delta^{-\beta' - \gamma'} \geq |B|_{\delta/\Delta_{j + 1} \to \delta/\Delta_{j}}\Delta^{-(\alpha/\gamma)\gamma_{j + 1}} \geq \Delta^{-\alpha(1 + \eta/(2\gamma))} \quad \Longrightarrow \quad \beta' + \gamma' \geq \alpha + \tfrac{\alpha \eta}{2}. \end{displaymath}
\end{itemize}
From this point on, one may follow \emph{verbatim} the proof of Theorem \ref{c3} below \eqref{form42}. The conclusion is the existence of a point $c \in \bar{C}_{0} \subset C_{0}$ which contradicts \eqref{form28a}.  \end{proof} 

\section{Deducing Theorem \ref{thm:ABCConjecture} from Theorem \ref{c3}}\label{s6}

In this section we deduce a slightly weaker version of Theorem \ref{thm:ABCConjecture} from Theorem \ref{c3}. The weaker version is otherwise the same as Theorem \ref{thm:ABCConjecture}, except that the "subset" conclusion \eqref{conclusion2} is replaced by $|A + cB|_{\delta} \geq \delta^{-\chi}|A|$ (in other words, the weaker version only treats the case $G = A \times B$). Modulo reducing the constant "$\chi$", this version of Theorem \ref{thm:ABCConjecture} formally implies the original, stronger version by standard, albeit lengthy, arguments in additive combinatorics, see \cite[Section 5]{MR4778059}.

Quite likely it would be possible -- and easier than going via \cite[Section 5]{MR4778059} -- to deduce Theorem \ref{thm:ABCConjecture} (in the stated strong formulation) directly from Theorem \ref{c3}.

\begin{proof}[Proof of Theorem \ref{thm:ABCConjecture} starting from Theorem \ref{c3}] Let $\alpha,\beta,\gamma$ and $A,B,C \subset \delta \Z \cap [0,1]$ be as in Theorem \ref{thm:ABCConjecture}. We may assume that $\alpha > 0$ and $\beta,\gamma \leq \alpha$, since the other cases are easily reduced to this one. Fix $\bar{\alpha} > \alpha$ and $\eta > 0$ such that $\beta + \gamma \geq \bar{\alpha} + 2\eta$. Let
\begin{equation}\label{def:chi} \chi := (\bar{\alpha} - \alpha) \cdot \min\{\eta/2, \epsilon\} > 0, \end{equation}
where $\epsilon := \epsilon(\bar{\alpha},\beta,\gamma,\eta) > 0$ is the constant given by Theorem \ref{c3} applied with parameters $\bar{\alpha},\beta,\gamma,\eta$. We claim that Theorem \ref{thm:ABCConjecture} holds with constant $\chi$.

 We may assume with no loss of generality that $A$ is $\{2^{-jT}\}_{j = 1}^{m}$-uniform for some $T \geq 1$. Let $f \colon [0,m] \to [0,\infty)$ be the branching function of $A$. Let $\{a_{j}\}_{j = 0}^{n}$ and $\{\alpha_{j}\}_{j = 1}^{n}$ be the sequences provided by Lemma \ref{l:combinatorial-weak2} applied to $f$. Write
\begin{displaymath} n_{0} := \max\{1 \leq j \leq n : \alpha_{j} \leq \bar{\alpha}\}, \end{displaymath}
and set $\Delta := 2^{-a_{n_{0}}T}$. Since the slopes $\alpha_{j}$ are increasing, 
\begin{displaymath} \delta^{-\alpha} \geq |A| \geq |A|_{\Delta \to \delta} \geq \prod_{j = n_{0}}^{n - 1} \left(\frac{\Delta_{j}}{\Delta_{j + 1}} \right)^{\alpha_{j + 1}} \geq \left(\frac{\Delta}{\delta} \right)^{\bar{\alpha}}, \end{displaymath}
thus $\Delta \leq \delta^{(\bar{\alpha} - \alpha)/\bar{\alpha}} \leq \delta^{\bar{\alpha} - \alpha}$. We claim that $\bar{A} := \mathcal{D}_{\Delta}(A)$ is a Katz-Tao $(\Delta,\bar{\alpha},C)$-set for an absolute constant $C > 0$. To see this, note that the branching function of $\bar{A}$ equals $f|_{[0,a_{n_{0}}]}$. So, by Lemma \ref{lemma3} (applied with $\epsilon = 0$), it suffices to show that
\begin{displaymath} f(a_{n_{0}}) - f(x) \leq \bar{\alpha}(a_{n_{0}} - x), \qquad x \in [0,n_{0}]. \end{displaymath} 
Fix $x \in [0,n_{0}]$, and let $j \in \{1,\ldots,n_{0}\}$ such that $x \in [a_{j - 1},a_{j}]$. We claim that $f(a_{j}) - f(x) \leq \alpha_{j}(a_{j} - x)$. Once this has been established, we are done:
\begin{align*} f(n_{0}) - f(x) & = f(a_{j}) - x + \sum_{i = j}^{n_{0} - 1} f(a_{i + 1}) - f(a_{i})\\
& \leq \alpha_{j}(a_{j} - x) + \sum_{i = j}^{n_{0} - 1} \alpha_{i + 1}(a_{i + 1} - a_{i}) \leq \bar{\alpha}(a_{n_{0}} - x). \end{align*}
To prove $f(a_{j}) - f(x) \leq \alpha_{j}(a_{j} - x)$, recall that $f$ is $(\alpha_{j},0)$-superlinear on $[a_{j - 1},a_{j}]$ with $\alpha_{j} = s_{f}(a_{j - 1},a_{j})$. Thus, $f(a_{j}) - f(x) = \alpha_{j}(a_{j} - a_{j - 1}) - (f(x) - f(a_{j - 1})) \leq \alpha_{j}(a_{j} - x)$.

 Next, recall that $B$ is $(\delta,\beta,\delta^{-\chi})$-Frostman and $C$ is $(\delta,\gamma,\delta^{-\chi})$-Frostman. It follows that there exist $\Delta$-separated sets $\bar{B} \subset C$ and $\bar{C} \subset C$ such that $\bar{B}$ is Katz-Tao $(\Delta,\beta)$ and $\bar{C}$ is Katz-Tao $(\Delta,\gamma)$, and
\begin{displaymath} |\bar{B}| \gtrsim \delta^{\chi}\Delta^{-\beta} \quad \text{and} \quad |\bar{C}| \gtrsim \delta^{\chi}\Delta^{-\gamma}. \end{displaymath}
(To see this, note that the $\beta$-dimensional Hausdorff content of $\cup \mathcal{D}_{\Delta}(B)$ exceeds $\delta^{\chi}$, and apply \cite[Proposition 3.9]{OrponenIncidenceGeometry}.) In particular, $|\bar{B}||\bar{C}| \gtrsim \delta^{2\chi}\Delta^{-\beta - \gamma} \geq \Delta^{-\bar{\alpha} - \eta}$ by \eqref{def:chi}, and since $\Delta \leq \delta^{\bar{\alpha} - \alpha}$. Note that $\bar{B},\bar{C}$ are also Katz-Tao $(\Delta,\alpha)$-sets, since we assumed $\beta,\gamma \leq \alpha$. We have now verified the hypotheses of Theorem \ref{c3} at scale $\Delta$. It follows that there exists $c \in \bar{C} \subset C$ such that 
\begin{displaymath} |\bar{A} + c\bar{B}|_{\Delta} \geq \Delta^{-\epsilon}|\bar{A}| = \Delta^{-\epsilon}|A|_{\Delta}. \end{displaymath}
Consequently, 
\begin{displaymath} |A + cB|_{\delta} \gtrsim |\bar{A} + c\bar{B}|_{\Delta}|A|_{\Delta \to \delta} \geq \Delta^{-\epsilon}|A| \geq \delta^{-\chi}|A|, \end{displaymath}
and the proof is complete. \end{proof}

\bibliographystyle{plain}
\bibliography{references}

\end{document}